\documentclass[11pt,a4paper]{amsart}

\usepackage{amsfonts,amsmath,amssymb,amsthm}

\usepackage[utf8]{inputenc}
\usepackage[T1]{fontenc}

\usepackage{float} 
\usepackage{hyperref} 
\usepackage{stmaryrd}
\usepackage{siunitx}

\newcommand{\Zz}{\mathbb{Z}}
\newcommand{\Qq}{\mathbb{Q}}

\renewcommand {\geq}{\geqslant}
\renewcommand {\le}{\leqslant}
\renewcommand {\ge}{\geqslant}
\renewcommand {\epsilon}{\varepsilon}

\newcommand{\notdivides}{\mathrel{\not|}}

\newcommand{\defi}[1]{\emph{\textbf{#1}}}
\newcommand{\ul}[1]{\mathbf{#1}}

\newcommand{\dd}{\mathrm{d}}

\newcommand{\Tree}{\mathop{\mathrm{Tree}}\nolimits}
\newcommand{\Trunk}{\mathop{\mathrm{Trunk}}\nolimits}
\newcommand{\Fan}{\mathop{\mathrm{Fan}}\nolimits}
\newcommand{\val}{\mathop{\mathrm{val}}\nolimits}
\newcommand{\mult}{\mathop{\mathrm{mult}}\nolimits}
\newcommand{\Poinc}{\mathop{\mathrm{Poinc}}\nolimits}

\newcommand{\reduc}[1]{\xoverline{#1}}

\newcommand{\ZZbb}{\mathbf{Z}}

\makeatletter
\renewcommand{\pod}[1]{\allowbreak\mathchoice
	{\if@display \mkern 8mu\else \mkern 8mu\fi (#1)}
	{\if@display \mkern 8mu\else \mkern 8mu\fi (#1)}
	{\mkern4mu(#1)}
	{\mkern4mu(#1)}
}
\makeatother

\makeatletter
\newsavebox\myboxA
\newsavebox\myboxB
\newlength\mylenA

\newcommand*\xoverline[2][0.75]{%
	\sbox{\myboxA}{$\m@th#2$}%
	\setbox\myboxB\null
	\ht\myboxB=\ht\myboxA%
	\dp\myboxB=\dp\myboxA%
	\wd\myboxB=#1\wd\myboxA
	\sbox\myboxB{$\m@th\overline{\copy\myboxB}$}
	\setlength\mylenA{\the\wd\myboxA}
	\addtolength\mylenA{-\the\wd\myboxB}%
	\ifdim\wd\myboxB<\wd\myboxA%
	\rlap{\hskip 0.5\mylenA\usebox\myboxB}{\usebox\myboxA}%
	\else
	\hskip -0.5\mylenA\rlap{\usebox\myboxA}{\hskip 0.5\mylenA\usebox\myboxB}%
	\fi}
\makeatother

\newcommand{\sauteligne}{\leavevmode}

\makeatletter
\def\subsection{\@startsection{subsection}{2}%
	\z@{.5\linespacing\@plus.7\linespacing}{.3\linespacing}%
	{\normalfont\bfseries}}
\makeatother

{\theoremstyle{plain}
	\newtheorem{theorem}{Theorem}[section]    
	\newtheorem{lemma}[theorem]{Lemma}       
	\newtheorem{proposition}[theorem]{Proposition}      
	\newtheorem{corollary}[theorem]{Corollary}      
	\newtheorem*{theorem*}{Theorem}
}
{\theoremstyle{remark}
	\newtheorem{definition}[theorem]{Definition}      
	\newtheorem{remark}[theorem]{Remark}   
	
}

\usepackage{tikz} 
\usetikzlibrary{calc}
\usetikzlibrary{decorations.pathreplacing}

\newcommand{\myfigure}[2]{
	\begin{center}\small
		\tikzstyle{every picture}=[scale=1.0*#1]
		#2
\end{center}}

\newcommand*\circledtextH{%
	\tikz[baseline=(C.base)]\node[draw,circle,inner sep=1pt](C) {H};\!
}

\usepackage[a4paper]{geometry}
\title[Polynomial equations modulo a prime power]
{Solutions of polynomial equations\\ in several variables\\ modulo a prime power}

\author{Arnaud Bodin}
\author{Christian Drouin}

\email{arnaud.bodin@univ-lille.fr}
\email{christian.drouin@wanadoo.fr}

\address{Université de Lille, CNRS, Laboratoire Paul Painlevé, 59000 Lille, France}
\address{Seignosse, France}

\subjclass[2020] {Primary 11A07; Sec. 11D45, 11S05}

\keywords{Polynomial, Congruence, Tree, Poincaré series}

\date{\today}

\begin{document}

\begin{abstract}
We explain how to obtain the set of solutions of a multivariate polynomial equation modulo a power of a prime number. These solutions are determined by a tree, called the \emph{trunk}, which makes it possible to reconstruct all solutions. We apply these methods to determine the number of solutions, without having to enumerate them. We also illustrate these techniques by proving a simple case of Igusa’s theorem: the Poincaré series associated with a polynomial in two separated variables is rational.
\end{abstract}

\maketitle


\section{Introduction}

\subsection{Objectives}

The main objective is to explain how to solve polynomial equations modulo an integer~$n$:
\begin{equation}
	P(x_1,\ldots,x_n) \equiv 0 \pmod{n}.
	\label{eq:modn}                  
\end{equation}
The first step consists in applying the Chinese remainder theorem in order to reduce to the case where the modulus is a power of a prime number. Indeed, if $n=p_1^{e_1} \cdots p_r^{e_r}$, then the solutions of~\eqref{eq:modn} are the solutions of
\begin{equation}
	\label{eq:modpiei}
	P(x_1,\ldots,x_n) \equiv 0 \pmod{p_i^{e_i}} \qquad \text{for all } 1 \le i \le r.
\end{equation}

In the remainder of the paper, we will consider only one of these equations, where $p$ is a prime number and $e \ge 1$:
\begin{equation}
	\label{eq:modpe}
	P(x_1,\ldots,x_n) \equiv 0 \pmod{p^e}
\end{equation}

For $e=1$, $\Zz/p\Zz$ is a field; in one variable, the number of solutions of~\eqref{eq:modpe} is bounded by the degree~$d$ of the polynomial, and in several variables by $dp^{n-1}$ via the Schwartz--Zippel lemma.
The main difficulty we will encounter is that for $e\ge2$, $\Zz/p^e\Zz$ is not an integral domain, and therefore, even in one variable, the number of solutions can be much larger than the degree.

\medskip

The one-variable case, based on Schmidt and Stewart~\cite{ScSt}, was studied in detail by the authors in~\cite{BD26}: it is explained there how to obtain and count all solutions of an equation $P(x) \equiv 0 \pmod{p^e}$ using a \emph{trunk}, which makes it possible to compute the \emph{solution tree}. One will also find in that article many examples in one variable, useful to understand before studying the multivariate case.

\subsection{Solutions modulo $p^e$}

It is surprising that these notions generalize without difficulty to equations in several variables, of course with some adaptations.
We will thus define the notions of \emph{trunk}, \emph{thickness}, and \emph{tree-top function} (with respective notations $\Trunk$, $t$, $\varphi$).
All these notions are defined in Section~\ref{sec:trunk}.
This will allow us to make all solutions explicit; see Theorem~\ref{th:trunktotree}.

If one only wishes to count the solutions, without enumerating them one by one, here is a formula (Theorem~\ref{th:count}), which will be proved in Section~\ref{sec:count}.
\begin{theorem}
	\label{th:count-intro}
	Let $P \in \Zz[x_1,\ldots,x_n]$.
	The number $N_e$ of solutions of the equation $P(x_1,\ldots,x_n) \equiv 0 \pmod{p^e}$ in $(\Zz/ p^e \Zz)^n$ is:
	\[
	N_e = \sum_{\substack{(\ul{r},k) \in \Trunk(P) \\ \varphi(\ul{r},k) -t_k \, < \, e \, \le \, \varphi(\ul{r},k)}} 
	p^{n(e-k)}.	
	\]
\end{theorem}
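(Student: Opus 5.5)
The plan is to obtain the count by summing, over the leaves of the trunk, the solutions each leaf contributes, using the structure of the solution tree given by Theorem~\ref{th:trunktotree}.

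I take the definitions of Section~\ref{sec:trunk} to be the following.
\begin{itemize}
\item A node $(\ul{r},k)\in\Trunk(P)$ is a residue class $\ul{r}$ modulo $p^k$, that is, a ball $\ul{r}+p^k\Zz^n$.
\item On such a ball, $P(\ul{r}+p^k\ul{y})=p^{\varphi(\ul{r},k)}\,Q(\ul{y})$ with $Q\not\equiv 0 \pmod p$ identically.
\item The thickness $t_k$ measures how many further levels are governed by that ball alone.
\end{itemize}
Under these readings, Theorem~\ref{th:trunktotree} says two things.
\begin{enumerate}
\item Every solution modulo $p^e$ either lies in a ball $\ul{r}+p^k\Zz^n$ coming from a leaf-type node of the trunk on which $P$ vanishes identically modulo $p^e$, or is reached by passing to a child node.
\item The balls attached to distinct nodes satisfying the stated inequality are disjoint and exhaust the solution set.
\end{enumerate}

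The first step is to make (1) precise for a fixed $e$. I would show that, for each $e$, the set of solutions in $(\Zz/p^e\Zz)^n$ is the disjoint union, over the nodes $(\ul{r},k)$ with $\varphi(\ul{r},k)-t_k<e\le\varphi(\ul{r},k)$, of the images of the balls $\ul{r}+p^k\Zz^n$ modulo $p^e$. Three ingredients are needed for this:
\begin{itemize}
\item If $e\le\varphi(\ul{r},k)$, then $p^e$ divides $P$ on the whole ball, so every point of the ball is a solution.
\item The condition $\varphi(\ul{r},k)-t_k<e$ says that $e$ is not already reached at an ancestor level. I would check this against the tree-top construction: a parent $(\ul{r}',k')$ has $\varphi(\ul{r}',k')\ge \varphi(\ul{r},k)-t_k$, or equivalently, the window $(\varphi-t_k,\varphi]$ of a node is exactly the range of $e$ in which that node is the minimal full ball. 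This prevents double counting.
\item Conversely, each solution $\ul{x}$ modulo $p^e$ determines a chain of trunk nodes containing it. Following the chain, the valuation of $P$ on the successive balls increases, so there is a first node whose ball is entirely contained in the solution set modulo $p^e$. That node satisfies the inequality.
\end{itemize}

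The second step is the count itself. A ball $\ul{r}+p^k\Zz^n$ with $k\le e$ has exactly $p^{n(e-k)}$ distinct images in $(\Zz/p^e\Zz)^n$. Summing this over the disjoint balls gives the formula. The case $k>e$ should not occur under the inequality, because $\varphi(\ul{r},k)-t_k\ge k$ or a similar normalization from the definitions. If that inequality is not built in, I would verify it from the definition of thickness.

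The main obstacle is the disjointness and exhaustion in the first step. This is the exact matching between the interval $(\varphi-t_k,\varphi]$ and the levels $e$ at which a given ball is the maximal ball contained in the solution set. I would argue it by induction on $k$ along the trunk, comparing the tree-top values of a node and its children. The key point is that the children of a node of thickness $t_k$ refine the ball exactly at the level where the node stops being full.
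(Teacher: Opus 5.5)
Your overall plan is the paper's. The paper proves Theorem~\ref{th:count} in two lines: it takes from Theorem~\ref{th:trunktotree} (existence plus uniqueness) that the fans of the nodes with $\varphi(\ul{r},k)-t_k<e\le\varphi(\ul{r},k)$ partition the solutions at height $e$, and then counts $p^{n(e-k)}$ vertices of height $e$ in each fan. If you take your item (2) directly from that theorem, your second step finishes the proof.

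The problem is your first step, where you re-derive the partition yourself. The third ingredient there is wrong as stated. ``The first node whose ball is entirely contained in the solution set modulo $p^e$'' need not satisfy the window inequality. Take $n=1$, $P(x)=x^p-x$ and $e=1$: every integer is a solution, so the root ball is already full, yet the root has $\varphi=0<1$. The nodes actually counted are the $p$ children, each of thickness $1$ since $P'(a)\equiv -1 \pmod p$. The criterion must be the polynomial one, $\varphi(\ul{r},k)\ge e$ (i.e.\ $p^e$ divides $P(\ul{r}+p^k\ul{y})$ as a polynomial), not the functional one.

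Moreover, the existence of a node with $\varphi\ge e$ on the chain of a solution does not follow from ``the valuation increases along the chain'', because chains can stop. The missing fact, which is the heart of the paper's proof of Theorem~\ref{th:trunktotree}, is this. Let $(\ul{r},K)$ be the last trunk node above a lift $\ul{x}_0=\ul{r}+p^K\ul{y}_0$. Then $\ul{y}_0 \bmod p$ is not a root of $\overline{P_K}$, because the children of $(\ul{r},K)$ are \emph{all} the roots of $\overline{P_K}$. Hence $\val_p P(\ul{x}_0)=\varphi(\ul{r},K)$ exactly, and $P(\ul{x}_0)\equiv 0 \pmod{p^e}$ forces $\varphi(\ul{r},K)\ge e$. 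If the chain is infinite, $\varphi\to\infty$ because every $t_i\ge 1$. The property you list, that the successor is not identically $0$ modulo $p$, is not what is used; what matters is non-vanishing at the particular residue $\ul{y}_0$.

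Two smaller corrections. First, the parent of $(\ul{r},k)$ has tree-top value exactly $\varphi(\ul{r},k)-t_k$, not merely at least that. So along any chain the windows $(\varphi-t_k,\varphi]$ are consecutive and disjoint, which gives uniqueness at once. Second, $\varphi(\ul{r},k)-t_k\ge k$ is false: in a Hensel tree $\varphi=k$ and $t_k=1$, so the only admissible $e$ is $e=k$. The correct bound is $\varphi(\ul{r},k)-t_k=t_1+\cdots+t_{k-1}\ge k-1$. Together with the strict inequality $e>\varphi(\ul{r},k)-t_k$, this gives $k\le e$. That is what you need both for the fan to have $p^{n(e-k)}$ vertices at height $e$ and for the window node of a solution to depend only on its class modulo $p^e$.
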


\subsection{The Poincaré series}

Recall that $N_e$ denotes the number of solutions of the equation
$P(x_1,\ldots,x_n) \equiv 0 \pmod{p^e}$.
The \emph{Poincaré series} is the generating series:
\[
S(T) = \sum_{e \geq 0} N_e \frac{T^e}{p^{ne}}
\]
(with $N_0=1$).

The celebrated theorem of Igusa~\cite{igusa} asserts that this series is in fact a rational fraction in~$T$.
It was originally a conjecture of Borewicz--Šafarevič~\cite{borevich-shafarevich}.
Another proof was given by Denef (see for instance~\cite{denef}, see also the survey~\cite{PoVe}).

A recent work~\cite{chakrabarti-saxena} establishes Igusa’s theorem in a very general framework, relying on a powerful algorithmic approach.
See also the thesis~\cite{chakrabarti}.
In the present article, we deliberately focus on the special case of two variables, and more precisely on the situation of a separated polynomial of the form $F(x)+G(y)$, for which
we present an explicit computation of the rational fraction.

In the case of a univariate polynomial,
the detailed study of a tree, as well as the algorithm we use,
which we call the \emph{trunk algorithm} or \emph{thickness algorithm},
appears in Schmidt and Stewart~\cite{ScSt},
in Zúñiga-Galindo~\cite{ZG2003},
in Berthomieu, Lecerf and Quintin~\cite{BLQ2013},
and very explicitly in Kopp, Randall, Rojas and Zhu~\cite{Kopp},
as well as in Dwivedi, Mittal and Saxena~\cite{DMS2019,DMS2019P4,DS2020}.

Computations of the Poincaré function in special cases
have been treated by different methods, for example, among others,
by~\cite{hayes-nutt} and~\cite{bollaerts}.

Most of the existing studies are technical and difficult.
We propose to illustrate the techniques developed here in order to give a simple proof of this result of Igusa in the special case of two separated variables.

\begin{theorem}
	\label{th:igusa-intro}
	Let $p$ be a prime number.
	Let $P(x,y) = F(x)+G(y) \in \Zz[x,y]$ be a separated polynomial in two variables ($n=2$).	
	Then the Poincaré series is a rational fraction:
	\[
	S(T) = \sum_{e \geq 0} N_e \frac{T^e}{p^{2e}} \quad \in\Qq[T].
	\]
\end{theorem}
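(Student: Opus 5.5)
We plan to reduce rationality of $S(T)$ for $P=F(x)+G(y)$ to the one-variable situation, where everything is governed by the trunk of $F$ and of $G$ (as in~\cite{BD26}). For $e\ge 1$ and $a\in\Zz/p^e\Zz$, let $A_e(a)$ be the number of $x \in \Zz/p^e\Zz$ with $F(x)\equiv a \pmod{p^e}$, and define $B_e(b)$ similarly for $G$. Then
\[
N_e=\sum_{a \in \Zz/p^e\Zz} A_e(a)\,B_e(-a).
\]
We group the values $a$ according to their position relative to the images of $F$ and $G$. The essential input is the one-variable trunk picture applied to $F-c$. Outside a finite set of ``critical'' $p$-adic values, which are the images under $F$ of the $p$-adic roots of $F'$, each fibre of $F$ above a $p$-adic disc of radius $p^{-e}$ has a cardinality that stabilizes. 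More precisely, there exist finitely many discs $D_1,\dots,D_m\subset\Zz_p$ with the following property: for $a$ in a disc but outside a smaller neighbourhood of a critical value, $A_e(a)$ equals $c_i\,p^{\lambda_i e+\mu_i}$, where $c_i,\lambda_i,\mu_i$ are constants read off from the trunk. Near a critical value $\gamma=F(\xi)$ with $F'(\xi)=0$, we expand $F(\xi+u)-\gamma$ and apply Theorem~\ref{th:count} with $n=1$ to $F-\gamma-p^j w$ for unit $w$. This expresses $A_e(a)$ in terms of $j=v_p(a-\gamma)$ and $e$ as a finite sum of terms $p^{\alpha e+\beta j+\delta}$ taken over a finite union of linear regions in $(j,e)$.

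Next we describe the count of $a$ modulo $p^e$ with prescribed data. The number of $a$ with $v_p(a-\gamma)=j$ and $v_p(a+\gamma')=j'$, where $\gamma$ is critical for $F$ and $-\gamma'$ is critical for $G$, is itself an explicit expression of the form $p^{e-j}(1-1/p)$ or similar. This holds up to finitely many coincidences of critical values, which are handled by a fixed finite case split. Substituting gives $N_e$ as a finite sum of terms
\[
\sum_{(j,j')\in R(e)} C\,p^{\alpha e+\beta j+\beta' j'},
\]
where each $R(e)$ is the set of lattice points of a rational polyhedron depending linearly on $e$. The generating series $\sum_e \sum_{(j,j')\in R(e)} p^{\alpha e+\beta j+\beta'j'}T^e/p^{2e}$ is then a sum of a rational function over the lattice points of a rational polyhedral cone in $(j,j',e)$-space. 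Such a sum is a rational function of $T$, by the standard fact that generating functions of lattice points in rational cones are rational, or directly by summing geometric series in two or three nested variables. Theorem~\ref{th:count} supplies the pieces at the level of the full two-variable trunk, so an alternative route is to show directly that $\Trunk(P)$ is eventually periodic: along each infinite branch, $\varphi$ and the thicknesses grow linearly, and summing $p^{2(e-k)}$ over nodes then yields geometric series.

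The main obstacle will be the neighbourhoods where a critical value of $F$ meets, up to sign, a critical value of $G$. There both $A_e$ and $B_e$ are non-constant, and the summation over $a$ involves two valuations that interact; near a point where $F-\gamma$ vanishes to order $m$, the fibre size $A_e(a)$ depends on $j$ through $\min(j,\dots)$ and on $j \bmod m$. First we would make this periodicity explicit by splitting $j$ into residue classes mod $m$ (and $j'$ mod $m'$), so that each piece is a genuine linear exponential on a rational cone. Second, we would check that only finitely many pieces occur, which follows from the finiteness of the trunk's branching structure (each infinite branch corresponds to a $p$-adic root with finite multiplicity). Rationality then follows by summing geometric series.
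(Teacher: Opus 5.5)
Your convolution route is genuinely different from the paper's and is viable, but one claim must be corrected before the summation step works.

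\textbf{The correction.} Near a critical value $\gamma$, the fibre count $A_e(a)$ is \emph{not} a function of $e$ and $j=v_p(a-\gamma)$ alone, even after splitting $j$ modulo $m$. It also depends on the class of the unit part $w=(a-\gamma)p^{-j}$ modulo a fixed power of $p$. Already for $F(x)=x^2$ and $p$ odd, with $j<e$, one has $A_e(p^jw)=2p^{j/2}$ if $j$ is even and $w$ is a square mod $p$, and $A_e(p^jw)=0$ otherwise.

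So grouping the $a$ only by $(j,j')$ does not make $A_e(a)B_e(-a)$ constant on the groups. At a coincidence of critical values, the residue conditions imposed on the same $w$ by $F$ and by $G$ must be taken jointly. For $x^2-y^4$, for instance, ``$w$ is a square'' and ``$w$ is a fourth power'' are nested, not independent.

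The repair is to refine each piece by the class of $w$ modulo $p^c$ for a fixed $c$, and by $w \bmod p^{e-j}$ on the finitely many strips $e-j<c$. Each refined class still contains a constant times $p^{e-j}$ residues $a$, so your cone summation goes through unchanged.

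A smaller slip: in your sketched alternative, thicknesses do not grow along a branch. They are non-increasing (Lemma~\ref{lem:inequalities}) and eventually constant; only $\varphi$ grows linearly.

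\textbf{Comparison with the paper.} The paper never uses $N_e=\sum_a A_e(a)B_e(-a)$. It works on $\Trunk(F+G)$ directly, splitting it into:
\begin{itemize}
\item a finite part;
\item Hensel trees, which contribute the denominator $p-T$;
\item finitely many infinite stalks of thickness $\ge 2$ (Lemma~\ref{lem:finite}).
\end{itemize}
Near each stalk the polynomial is normalized to $x^uU(x)+p^\alpha y^vV(y)$ (Lemma~\ref{lem:normnew}). The branches leaving the stalk are analyzed explicitly (\S\ref{ssec:alphapositive}--\S\ref{ssec:higher}), shown to repeat with period $u$ up to a lengthening by $v-u$ per period, and summed as geometric series.

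Both proofs localize the difficulty in the same place. Your coincidences $F(\rho)=-G(\sigma)$ with $F'(\rho)=G'(\sigma)=0$ are exactly the limit points $(\rho,\sigma)\in\Zz_p^2$ of those stalks, i.e.\ the singular points of $F+G=0$.

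What your route buys is modularity. All the $p$-adic analysis is univariate, and the two-variable step is pure lattice-point summation; this is essentially Denef's cell-decomposition strategy specialized to separated variables, and it treats $F$ and $G$ symmetrically without normal forms. Its cost is that the central input, a uniform-in-$a$ piecewise description of $A_e(a)$, is only asserted. Proving it means tracking how the trunk of $F-a$ deforms as $a$ approaches a critical value, which is the one-variable analogue of the paper's analysis of branches leaving the principal stalk.

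The paper's route needs no statement uniform in a parameter. It also yields the explicit trunk and the candidate denominators $p-T$, $p^2-T^u$, $p^{2u}-T^{u^2}$, $p^{u+v}-T^{uv}$.

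Your alternative via eventual periodicity of $\Trunk(P)$ is essentially the paper's proof. However, ``summing geometric series'' hides its main work, since infinitely many Hensel trees hang off each stalk of thickness $\ge 2$.
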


Moreover, the expression as a rational fraction could be computed from the trunk; here we will do so in practice only for certain special cases.
Everything concerning Poincaré series and the proof of this result is contained in Section~\ref{sec:count} and the following sections.

In the appendix, we provide the detailed example of $P(x,y) = x^2-y^3$ and of its solutions modulo~$p$.
One finds there the computation of the trunk, of the Poincaré series, and of the number of solutions of
$x^2-y^3 \equiv 0 \pmod{p^e}$.

\section{The trunk}
\label{sec:trunk}

Fix a prime integer~$p$.
Let $\ZZbb$ denote either the ring $\Zz$ of integers or the ring $\Zz_p$ of $p$-adic integers.
Let $P(x_1,\ldots,x_n)$ be a polynomial in $n \ge 1$ variables with coefficients in $\ZZbb$.
We write $\ul{x} = (x_1,\ldots,x_n)$.

\subsection{Thickness}

\begin{definition}
	\label{def:thickness}
	The \defi{thickness} of $P(\ul{x}) \in \ZZbb[\ul{x}]$ at the point $\ul{x}_0 \in \Zz^n$ is the largest integer $t$ such that there exists $Q(\ul{x}) \in \ZZbb[\ul{x}]$ satisfying:
	\[
	P(\ul{x}_0 + p\ul{x} ) = p^t Q(\ul{x}).
	\]
	The polynomial $Q(\ul{x})$ is then called the \defi{successor} of $P$ at $\ul{x}_0$.
\end{definition}

The thickness $t$ is positive if and only if $\ul{x}_0 \in \Zz^n$ is a solution of
$P(\ul{x}) \equiv 0 \pmod{p}$.

\medskip

Throughout the remainder of the article, we shall assume that $P(\ul{x})$ is $p$-primitive
(i.e., not all coefficients are divisible by~$p$).
This can be done without loss of generality: indeed, if $P(\ul{x})$ were not $p$-primitive, we would write
$P(\ul{x})=p^{t_0}P_0(\ul{x})$ where $P_0$ is $p$-primitive.
Anticipating what follows, one would then assign the thickness $t_0$ to the vertex $(\ul{0},0)$ of the trunk,
and add the value $t_0$ to the definition of the tree-top function in order to ensure the validity of the statements.

\subsection{Solution tree}

\begin{definition}
	\label{def:omega}
	The \defi{$p$-adic congruence tree} $\Omega_p$ is given by the \emph{vertices}:
	\[
	\Omega_p = \Big\lbrace
	(\ul{x}, e) \mid \ul{x} \in (\Zz/p^e\Zz)^n, \; e \ge 0 \Big\rbrace.
	\]
	Two vertices $(\ul{x}, e)$ and $(\ul{x}', e+1)$ are connected by an \emph{edge}
	if and only if $\ul{x}' \equiv \ul{x} \pmod {p^e}$, that is,
	$x_i' \equiv x_i \pmod{p^e}$ for all $i=1,\ldots,n$.
\end{definition}

In this way, we endow $\Omega_p$ with a tree structure. The root is the vertex $(\ul 0, 0)$.
From each vertex emanate exactly $p^n$ outgoing edges.

\begin{remark}
	We sometimes use the natural identification of $\Zz/m\Zz$ with $\llbracket 0,m-1\rrbracket$.
	We denote by $k \% m$ the remainder of the Euclidean division of $k$ by $m$.
	
	In the one-variable case $(n=1)$, a finite path in the $p$-adic congruence tree $\Omega_p$ starting from the root corresponds to the $p$-adic expansion of an integer $N \in \Zz$ associated with the terminal vertex:
	\[
	N = a_0 + a_1p+\cdots +a_k p ^k
	\]
	where $a_i \in \llbracket 0, p-1 \rrbracket$.
	The $p$ possible values for $a_i$ correspond to the $p$ outgoing edges from a vertex.
	An infinite path in $\Omega_p$ corresponds to a $p$-adic integer $N \in \Zz_p$ via its expansion:
	\[
	N = a_0 + a_1p+\cdots +a_k p ^k +\cdots 
	\]
	
	In the case of $n$ variables, one works componentwise: a finite path in $\Omega_p$ corresponds to $(N_1,\ldots,N_n) \in \Zz^n$, and an infinite path to $(N_1,\ldots,N_n) \in \Zz_p^n$.
\end{remark}

\begin{definition}
	\label{def:tree}
	The \defi{solution tree} is defined by
	\[
	\Tree(P) = \Big\{
	(\ul{x}, e) \in \Omega_p \mid P(\ul{x}) \equiv 0 \pmod{p^e}
	\Big\}.
	\]
\end{definition}

It is a (finite or infinite) subtree of $\Omega_p$ containing the root.

We again refer to the exposition~\cite{BD26} for further explanations, examples, and figures.

\subsection{Trunk}

The trunk is a tree of $\Omega_p$ that makes it possible to reconstruct the solution tree $\Tree(P)$.
It is in fact a subtree of $\Tree(P)$, which may also be finite or infinite, but is simpler in the sense that it contains far fewer vertices.

\begin{definition}
	\label{def:trunk}
	The \defi{trunk} of $P(\ul{x}) \in \ZZbb[\ul{x}]$, denoted by $\Trunk(P)$, is defined recursively as follows:
	\begin{itemize}
		\item \emph{Initialization.}
		The root $(\ul{0}, 0)$ is an element of the trunk (this vertex has no thickness). The polynomial attached to this vertex is $P(\ul{x})$.
		
		\item \emph{Heredity.}
		Suppose that $(\ul{r}, k)$ is a vertex of the trunk attached to a polynomial $P_k(\ul{x})$.
		We construct the vertices of height $k+1$ attached to this vertex.
		Consider the equation $P_k(\ul{x}) \equiv 0 \pmod{p}$. For each
		of its solutions $\ul{r}_k \in (\Zz/p\Zz)^n$, we compute the decomposition associated with the thickness
		$P_k(\ul{r}_k + p \ul{x}) = p^{t_{k+1}} P_{k+1}(\ul{x})$.
		Then $(\ul{r}+ p^{k} \ul{r}_k, k+1)$ is a new element of the trunk.
		We associate to it the thickness $t_{k+1}$ and the polynomial $P_{k+1}$.
	\end{itemize}
\end{definition}

Thus, a vertex of the trunk at height $k$ is defined by an element
$\ul{r} = \ul{r}_0+ p\ul{r}_1+\cdots+p^{k-1}\ul{r}_{k-1}$,
where $\ul{r}_i\in \llbracket 0,p-1\rrbracket^n$ ($1\le i\le k-1$).
The outgoing edges are associated with the solutions $\ul{r}_k$ of
$P_k(\ul{x}) \equiv 0 \pmod{p}$.
In what follows, we will essentially consider the trunk as a tree weighted by a thickness at each vertex.

\subsection{Tree-top function}

\begin{definition}
	\label{def:treetop}
	Let $(\ul{r}, k) \in \Trunk(P)$.
	The \defi{tree-top function} at this vertex is defined by:
	\[
	\varphi(\ul{r},k) = t_1 + \cdots +t_k
	\]
	where $t_1,\ldots, t_k$ are the thicknesses of the vertices along the path connecting the vertex $(\ul{r}, k)$ to the root $(\ul{0}, 0)$.
\end{definition}

\begin{lemma}
	\label{lem:toptree}
	There exists a decomposition:
	\[
	P(\ul{r}+p^k \ul{x}) = p^{\varphi(\ul{r},k)} Q(\ul{x})
	\]
	where $Q(\ul{x}) \in \ZZbb[\ul{x}]$.
\end{lemma}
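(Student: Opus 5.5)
The plan is to prove the stronger statement, by induction on the height $k$, that the polynomial $P_k$ attached to the vertex $(\ul{r},k)$ in Definition~\ref{def:trunk} satisfies
\[
P(\ul{r}+p^k\ul{x}) = p^{\varphi(\ul{r},k)}\, P_k(\ul{x}).
\]
The lemma then follows by taking $Q = P_k$, which lies in $\ZZbb[\ul{x}]$ by construction.

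For the base case $k=0$, the vertex is the root $(\ul{0},0)$. Its attached polynomial is $P_0=P$, and $\varphi(\ul{0},0)=0$ as an empty sum. The identity reads $P(\ul{0}+\ul{x})=P(\ul{x})$, which holds trivially.

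For the inductive step, let $(\ul{r}',k+1)$ be a vertex of the trunk. By Definition~\ref{def:trunk}, it arises from a vertex $(\ul{r},k)$ and a solution $\ul{r}_k$ of $P_k(\ul{x})\equiv 0 \pmod p$. Here $\ul{r}'=\ul{r}+p^k\ul{r}_k$ and
\[
P_k(\ul{r}_k+p\ul{x}) = p^{t_{k+1}}P_{k+1}(\ul{x}).
\]
By the induction hypothesis, applied with $\ul{x}$ replaced by $\ul{r}_k+p\ul{x}$, we get
\[
P(\ul{r}+p^k\ul{r}_k+p^{k+1}\ul{x}) = p^{\varphi(\ul{r},k)}P_k(\ul{r}_k+p\ul{x}) = p^{\varphi(\ul{r},k)+t_{k+1}}P_{k+1}(\ul{x}).
\]
The path from $(\ul{r}',k+1)$ to the root is the path from $(\ul{r},k)$ to the root extended by one edge. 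Hence $\varphi(\ul{r}',k+1)=\varphi(\ul{r},k)+t_{k+1}$, and this closes the induction.

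No step here is a real obstacle. The only points needing care are bookkeeping. First, the substitution $\ul{x}\mapsto \ul{r}_k+p\ul{x}$ must be legitimate in the polynomial identity; it is, because the identity holds in $\ZZbb[\ul{x}]$ and $\ul{r}_k+p\ul{x}$ has coefficients in $\ZZbb$. Second, $\ul{r}$ should be read as the integer representative $\ul{r}_0+p\ul{r}_1+\cdots+p^{k-1}\ul{r}_{k-1}$ rather than as a residue class. Third, in the non-primitive case one adds $t_0$ to $\varphi$, as the paper remarks, and the same induction goes through unchanged.
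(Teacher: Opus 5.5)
Your proof is correct and follows essentially the same route as the paper, which composes the successive decompositions $P_j(\ul{r}_j+p\ul{x}) = p^{t_{j+1}}P_{j+1}(\ul{x})$ by substitution and inducts on the height. You make that induction explicit by proving the sharper identity with $Q = P_k$, which the paper leaves implicit.
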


\begin{proof}
	Let $\ul r = \ul{r}_0+ p\ul{r}_1+\cdots+p^{k-1}\ul{r}_{k-1}$ be the $p$-adic decomposition
	of $\ul{r}$, obtained componentwise for each of the $n$ coordinates.
	We have $P(\ul{r}_0+p\ul{x}) = p^{t_1} P_1(\ul{x})$, $P_1(\ul{r}_1+p\ul{x}) = p^{t_2} P_2(\ul{x})$, hence
	$P(\ul{r}_0+p\ul{r}_1 +p^2\ul{x}) = p^{t_1+t_2}P_2(\ul{x})$, and the stated formula follows by induction.
\end{proof}

\subsection{From the trunk to the tree}

By definition:
\[
(\ul{x}_0,e) \in \Tree(P) 
\iff
P(\ul{x}_0) \equiv 0 \pmod{p^e} .
\]

Here is how to compute all solutions from the trunk; this is a multivariate generalization of~\cite{BD26}:
\begin{theorem}
	\label{th:trunktotree}
	\[
	P(\ul{x}_0) \equiv 0 \pmod{p^e} 
	\iff
	\text{ there exists }  (\ul{r},k) \in \Trunk(P) \text{ such that } 
	\left\lbrace\begin{array}{l}
		\ul{x}_0 \equiv \ul{r} \pmod{p^k}  \\
		\text{and } \varphi(\ul{r},k) \ge e
	\end{array}\right.
	\]
	
	Moreover, for a given solution $\ul{x}_0$, such a pair $(\ul{r},k)$ is unique if we impose the condition:
	\[
	\varphi(\ul{r},k) -t_k < e \le \varphi(\ul{r},k)
	\]
	where $t_k$ is the thickness of the vertex $(\ul{r},k)$.  	
\end{theorem}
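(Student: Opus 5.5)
The reverse implication follows directly from Lemma~\ref{lem:toptree}. If $(\ul{r},k)\in\Trunk(P)$ and $\ul{x}_0\equiv \ul{r}\pmod{p^k}$, we write $\ul{x}_0=\ul{r}+p^k\ul{y}$ with $\ul{y}\in\Zz^n$. Then $P(\ul{x}_0)=p^{\varphi(\ul{r},k)}Q(\ul{y})$ with $Q$ having coefficients in $\ZZbb$. Hence $P(\ul{x}_0)\equiv 0\pmod{p^{\varphi(\ul{r},k)}}$, and therefore also modulo $p^e$ whenever $\varphi(\ul{r},k)\ge e$.

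For the forward implication and uniqueness, I will follow the path in the trunk determined by the $p$-adic digits of $\ul{x}_0$. Write $\ul{x}_0=\ul{a}_0+p\ul{a}_1+p^2\ul{a}_2+\cdots$ componentwise, with $\ul{a}_i\in\llbracket 0,p-1\rrbracket^n$, and set $\ul{r}^{(k)}=\ul{a}_0+\cdots+p^{k-1}\ul{a}_{k-1}$. The root $(\ul{0},0)$ lies in the trunk, with $\varphi=0$ and attached polynomial $P_0=P$. Suppose $(\ul{r}^{(k)},k)$ is in the trunk with attached polynomial $P_k$. Iterating the decompositions as in the proof of Lemma~\ref{lem:toptree}, we get the exact identity $P(\ul{r}^{(k)}+p^k\ul{x})=p^{\varphi(\ul{r}^{(k)},k)}P_k(\ul{x})$. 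Evaluating at $\ul{x}=\ul{a}_k+p\ul{a}_{k+1}+\cdots$ gives
\[
P(\ul{x}_0)=p^{\varphi(\ul{r}^{(k)},k)}P_k(\ul{a}_k+p\,\cdots).
\]
The dichotomy is then as follows.
\begin{itemize}
\item If $P_k(\ul{a}_k)\equiv0\pmod p$, then $\ul{a}_k$ is an admissible digit, so $(\ul{r}^{(k+1)},k+1)$ is in the trunk and $\varphi$ increases by $t_{k+1}\ge1$.
\item Otherwise the path stops at height $k$, and $v_p(P(\ul{x}_0))=\varphi(\ul{r}^{(k)},k)$ exactly.
\end{itemize}

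Now assume $P(\ul{x}_0)\equiv0\pmod{p^e}$. Along this path $\varphi$ is strictly increasing, since every thickness beyond the root is positive (the paper notes $t>0$ exactly at solutions mod $p$). So either the path is infinite, in which case $\varphi$ eventually reaches at least $e$, or it stops at some height $k$. In the latter case $e\le v_p(P(\ul{x}_0))=\varphi(\ul{r}^{(k)},k)$. (If $P(\ul{x}_0)=0$, the valuation argument shows the path cannot stop, so it is infinite.) In either case some vertex on the path has $\varphi\ge e$, which gives existence. Take $k$ minimal with $\varphi(\ul{r}^{(k)},k)\ge e$. Since $e\ge1$ we have $k\ge1$, and minimality gives $\varphi(\ul{r}^{(k-1)},k-1)=\varphi-t_k<e$, so this vertex satisfies the stated inequality.

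The main point needing care is uniqueness, which I handle through a nesting argument. Any trunk vertex $(\ul{r},k)$ with $\ul{x}_0\equiv\ul{r}\pmod{p^k}$ must be the vertex $(\ul{r}^{(k)},k)$ of our path. This holds because trunk vertices at height $k$ are indexed by residues modulo $p^k$, and the ancestors of $(\ul{r},k)$ are its truncations. So all candidate pairs lie on a single path. Along that path the intervals $(\varphi-t_k,\varphi]=(\varphi(\ul{r}^{(k-1)},k-1),\varphi(\ul{r}^{(k)},k)]$ are consecutive and disjoint, so at most one of them contains $e$.

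A minor point to check is the non-primitive convention mentioned earlier, where $t_0$ is added. With $P$ assumed $p$-primitive, as the paper does, this issue does not arise.
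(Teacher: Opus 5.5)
Your proof is correct and follows essentially the same route as the paper: take the last trunk vertex along the digit path of $\ul{x}_0$, use Lemma~\ref{lem:toptree} to write $P(\ul{x}_0)=p^{\varphi}\cdot(\text{unit})$ there, and get uniqueness from the consecutive, disjoint intervals $(\varphi-t_k,\varphi]$ along that single path. Your version follows the full path and takes the minimal $k$ with $\varphi\ge e$, and in doing so it is slightly more careful than the paper's \emph{most recent ancestor of $(\ul{x}_0,e)$} argument. That argument tacitly needs a separate trivial case: when that ancestor sits at height $e$ itself, the successor need not be a unit at $\ul{y}_0$, although $\varphi\ge e$ holds anyway.
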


Let us reformulate the first part of the theorem in graph-theoretic terms.
We say that the \defi{fan} from a vertex $(\ul{r},k)$ up to height $h$
is the set of vertices of $\Omega_p$ issued from the vertex $(\ul{r},k)$ up to height $h$:
\[
\Fan_{\le h}(\ul{r},k) = \big\{ (\ul{x},l) \in \Omega_p \mid  \ul{x} \equiv \ul{r} \pmod{p^k} \text{ and } l \le h \big\}.
\]
In graph-theoretic terms, Theorem~\ref{th:trunktotree} becomes:
\[
\Tree(P) = \bigcup_{(\ul{r},k) \in \Trunk(P)} \Fan_{\le  \varphi(\ul{r},k)}(\ul{r},k).
\]
The uniqueness will in turn be fundamental when we want to count the number of solutions; see Theorem~\ref{th:count}.

\begin{proof}
	Let $(\ul{x}_0, e)$ be an element of the $p$-adic congruence tree $\Omega_p$.
	Let $(\ul{r}, k) \in \Trunk(P)$ denote the most recent ancestor of $(\ul{x}_0, e)$ belonging to the trunk of~$P$.
	The relation between $\ul{x}_0$ and $\ul{r}$ is given by
	$\ul{x}_0 = \ul{r} + p^{k} \ul{y}_0$ (where $\ul{y}_0 \in\Zz^n$).
	
	By Lemma~\ref{lem:toptree}, since $(\ul{r}, k)$ is an element of the trunk, we know that there exists a decomposition:
	\[
	P(\ul{r} + p^{k} \ul{x}) = p^{\varphi(\ul{r}, k)} Q(\ul{x})
	\]
	where $\varphi(\ul{r},k)$ is the value of the tree-top function.
	
	The assumption that $(\ul{r}, k) \in \Trunk(P)$ is the most recent ancestor in the trunk means exactly that
	$Q(\ul{y}_0) \not\equiv 0 \pmod{p}$.
	
	We now have all the ingredients to conclude the proof:
	\begin{align*}
		P(\ul{x}_0) \equiv 0 \pmod{p^e}
		&\iff P(\ul{r}+p^{k}\ul{y}_0) \equiv 0 \pmod{p^e} \\
		&\iff p^{\varphi(\ul{r},k)} Q(\ul{y}_0) \equiv 0 \pmod{p^e} \\  	
		&\iff  \varphi(\ul{r},k) \ge e
	\end{align*}
	The assumption $Q(\ul{y}_0) \not\equiv 0 \pmod{p}$ is used in the last equivalence.
	
	It remains to prove the uniqueness characterization.
	For the moment, from a vertex $(\ul{r}, k)$ of the trunk, we have constructed a fan of solutions up to height $\varphi(\ul{r}, k)$.
	
	But from $(\ul{r}^-,k-1)$ (the direct ancestor of $(\ul{r},k)$), we construct solutions only up to height
	$\varphi(\ul{r}^-, k-1) = \varphi(\ul{r}, k) - t_k$.
	Thus the solutions whose height $e$ satisfies
	$\varphi(\ul{r}, k) - t_k < e \le \varphi(\ul{r}, k)$
	are determined solely by the vertex $(\ul{r}, k)$ of the trunk.
\end{proof}

\section{Properties of the thickness}

\subsection{Computation of the thickness}

Let $P(\ul{x}) \in \ZZbb[\ul{x}]$. Recall that the \defi{thickness} of $P$ at $\ul{r}$ is the largest integer $t$ such that one has a decomposition
$P(\ul{r}+p\ul{x}) = p^t Q(\ul{x})$.
We will use Taylor's formula to compute the thickness.
Indeed, the Taylor expansion applied to $P$ at $\ul{r}$ gives the decomposition:
\[
P(\ul{r} + \ul{x}) = \sum_{h=0}^{d} P_h(\ul{x}),
\]
where $d$ is the degree of $P$ and $P_h(\ul{x})$ is a homogeneous polynomial of degree $h$:
\[
P_h(\ul{x}) 
= \frac{1}{h!} \dd^{h} P(\ul{r})[\ul{x},\ldots,\ul{x}]
= \frac{1}{h!} \sum_{|\ul{i}|=h}  \partial^{\ul{i}} P (\ul{r}) \ul{x}^{\ul{i}}.
\]
We use the usual conventions for multi-indices:
$\ul{i} = (i_1,\ldots,i_n)$, $|\ul{i}| = i_1+\cdots+i_n$,
$\ul{x}^{\ul{i}} = x_1^{i_1} \cdots x_n^{i_n}$, and
$\partial^{\ul{i}} P (\ul{x}) = \frac{\partial^{i_1+\cdots+i_n} P}{\partial x_1^{i_1}\cdots\partial x_n^{i_n}}$.

\begin{lemma}
	\label{lem:taylor}
	
	\[
	t = 
	\min_{|\ul{i}|\ge0} \val_p \left( \frac{1}{|\ul{i}|!}{\partial^{\ul{i}} P(\ul{r})} \, p^{|\ul{i}|}\right)
	\]
	In particular $t \le \deg P$.
\end{lemma}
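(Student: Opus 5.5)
The plan is to read the thickness off the coefficients of the polynomial $P(\ul{r}+p\ul{x})$. The starting observation is elementary: for $R \in \ZZbb[\ul{x}]$ and $t\ge 0$, one can write $R = p^t Q$ with $Q \in \ZZbb[\ul{x}]$ if and only if $p^t$ divides every coefficient of $R$. Hence the thickness $t$ of Definition~\ref{def:thickness} is exactly the minimum of the $p$-adic valuations of the coefficients of $R(\ul{x}) = P(\ul{r}+p\ul{x})$. So everything reduces to computing these coefficients.

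\emph{Step 1: the coefficients.} Apply the Taylor expansion at $\ul{r}$ recalled above and substitute $p\ul{x}$ for $\ul{x}$. Since $P_h$ is homogeneous of degree $h$, we get $P_h(p\ul{x}) = p^h P_h(\ul{x})$. Thus the coefficient of the monomial $\ul{x}^{\ul{i}}$ in $P(\ul{r}+p\ul{x})$ is $p^{|\ul{i}|}$ times the Taylor coefficient of $\ul{x}^{\ul{i}}$ at $\ul{r}$.

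One point needs care here, and I expect it to be the only delicate one. The Taylor coefficient of $\ul{x}^{\ul{i}}$ is $\partial^{\ul{i}}P(\ul{r})/(i_1!\cdots i_n!)$, that is, the multi-index factorial rather than $|\ul{i}|!$. I will therefore read the factorial in the statement as the multi-index factorial $\ul{i}! = i_1!\cdots i_n!$; in one variable the two coincide. It is also worth checking that this coefficient lies in $\ZZbb$, so that valuations make sense. This holds because it is a divided (Hasse) derivative: expanding each monomial $(\ul{r}+\ul{x})^{\ul{a}}$ by the binomial theorem gives the coefficient $\sum_{\ul{a}} c_{\ul{a}}\binom{a_1}{i_1}\cdots\binom{a_n}{i_n}\ul{r}^{\ul{a}-\ul{i}}$, which is visibly in $\ZZbb$. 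Taking the minimum of the valuations over all $\ul{i}$ then gives the displayed formula for $t$. The minimum ranges over finitely many relevant terms, namely those with $|\ul{i}|\le d$, since all other coefficients vanish.

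\emph{Step 2: the bound $t\le \deg P$.} Write $P(\ul{r}+p\ul{x}) = p^tQ(\ul{x})$ and substitute $\ul{x} = (\ul{y}-\ul{r})/p$. This gives
\[
P(\ul{y}) = p^t\,Q\big((\ul{y}-\ul{r})/p\big).
\]
Since $\deg Q = \deg P = d$ and $\ul{r}$ has integral entries, the polynomial $Q((\ul{y}-\ul{r})/p)$ has coefficients in $p^{-d}\ZZbb$. Hence every coefficient of $P$ is divisible by $p^{t-d}$ in $\Qq_p$. Since $P$ is $p$-primitive, some coefficient has valuation $0$, which forces $t-d\le 0$. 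Alternatively, one can use the formula directly: a top-degree coefficient $c_{\ul{a}}$ of $P$ with $|\ul{a}|=d$ satisfies $c_{\ul{a}} = \partial^{\ul{a}}P(\ul{r})/\ul{a}!$, so $t \le \val_p(c_{\ul{a}}) + d$. This gives $t\le d$ whenever some top-degree coefficient is a $p$-adic unit. The substitution argument is the one to use in general, since it relies only on primitivity.
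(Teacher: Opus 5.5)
Your proof is correct and follows the paper's argument: both identify $t$ as the minimal $p$-adic valuation of the coefficients $p^{|\ul{i}|}\,\partial^{\ul{i}}P(\ul{r})/\ul{i}!$ of $P(\ul{r}+p\ul{x})$, read off from the Taylor expansion. You are also right that the factorial must be $\ul{i}!=i_1!\cdots i_n!$, since the paper's expression for $P_h$ drops the multinomial coefficients: for $P=xy$, $p=2$, $\ul{r}=(0,0)$ the literal formula gives $1$ while $t=2$. Your primitivity argument correctly supplies the bound $t\le\deg P$, which the paper only asserts.
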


\begin{proof}
	
	The Taylor formula applied to $P(\ul{r} + p\ul{x})$ gives:
	\[
	P(\ul{r} + p\ul{x}) = \sum_{h=0}^{d} P_h(\ul{x}) p^h.
	\]
	
	Let $t$ denote the thickness and $t'$ the minimum appearing in the statement.
	Since $p^t$ divides $P_h(\ul{x}) p^h$ for all $0 \le h \le d$,
	we have $t \le t'$.
	Conversely, $p^{t'}$ divides all the coefficients of all the polynomials $P_h(\ul{x}) p^h$, hence $p^{t'}$ divides $P(\ul{r} + p\ul{x})$, so $t \ge t'$.
\end{proof}

\subsection{The thickness decreases along the trunk}

With the notation of the decomposition $P(\ul{r}_0+p\ul{x}) = p^t Q(\ul{x})$,
we define the \defi{residual degree} of $P$ at $\ul{r}_0$ as the integer
$s = \deg \reduc{Q}$, where $\reduc{Q}(\ul{x})$ denotes the reduction modulo $p$ of $Q(\ul{x})$.

We denote by $\mult(\reduc{P}, \ul{r}_0)$ the multiplicity of $\ul{r}_0$ as a root of $\reduc{P}$ in $(\Zz/p\Zz)^n$.
If $P(\ul{r}_0 + \ul{x}) = \sum_{h=0}^{d} P_h(\ul{x})$ is the decomposition into a sum of homogeneous polynomials,
then this multiplicity is the smallest integer $h$ such that
$\reduc{P_h}(\ul{x}) \not\equiv 0 \pmod{p}$.

Now consider two successive decompositions:
$P(\ul{r}_0+p\ul{x}) = p^t Q(\ul{x})$, followed by
$Q(\ul{r}_1+p\ul{x}) = p^{t'} R(\ul{x})$.

\begin{lemma}
	\label{lem:inequalities}
	
	\[
	\mult(\reduc{P}, \ul{r}_0) 
	\ge t
	\ge s
	\ge \mult(\reduc{Q}, \ul{r}_1) 
	\]
	
	In particular, $t \ge t'$.
\end{lemma}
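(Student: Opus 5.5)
The plan is to work throughout with the Taylor decomposition at $\ul{r}_0$,
\[
P(\ul{r}_0+\ul{x}) = \sum_{h=0}^{d} P_h(\ul{x}), \qquad
P(\ul{r}_0+p\ul{x}) = \sum_{h=0}^{d} p^h P_h(\ul{x}) = p^t Q(\ul{x}),
\]
and to prove the three inequalities one at a time. The last one will follow by applying the first to $Q$ at $\ul{r}_1$.

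First I check that the $P_h$ have coefficients in $\ZZbb$. They are the homogeneous parts of the polynomial $P(\ul{r}_0+\ul{x})\in\ZZbb[\ul{x}]$: the coefficient of $\ul{x}^{\ul{i}}$ is $\partial^{\ul{i}}P(\ul{r}_0)/(i_1!\cdots i_n!)$, which is integral. This is the only place where the Taylor normalization needs care. The key structural fact is that the terms $p^hP_h(\ul{x})$ for different $h$ involve disjoint sets of monomials, so there is no cancellation between degrees. Consequently the thickness is the minimum over $h$ of the $p$-adic valuation of the coefficients of $p^hP_h$, which is essentially Lemma~\ref{lem:taylor}.

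\emph{Step 1: $\mult(\reduc{P},\ul{r}_0)\ge t$.} Let $m=\mult(\reduc{P},\ul{r}_0)$. By definition $P_m$ has a coefficient that is a $p$-adic unit, so $p^mP_m$ has a coefficient of valuation exactly $m$. Since that monomial appears in no other $p^hP_h$, $p^t$ must divide it, which gives $t\le m$.

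\emph{Step 2: $t\ge s$.} From $p^tQ=\sum_h p^hP_h$, the homogeneous component of degree $h$ of $Q$ is $p^{h-t}P_h$. This is legitimate because $p^t$ divides each term, again by the absence of cancellation between degrees. For $h>t$ this component is divisible by $p$, so it vanishes in $\reduc{Q}$. Hence $\deg\reduc{Q}\le t$. Note that $\reduc{Q}\neq 0$ by maximality of $t$, so $s$ is well defined.

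\emph{Step 3: $s\ge\mult(\reduc{Q},\ul{r}_1)$.} The point $\ul{r}_1$ is a root of $\reduc{Q}$ in the setting of the trunk, and in any case the argument does not need this. The translate $\reduc{Q}(\ul{r}_1+\ul{x})$ is a nonzero polynomial over $\Zz/p\Zz$ of degree $s$, since translation preserves both degree and nonvanishing. Its lowest nonzero homogeneous component therefore has degree at most $s$, which is exactly $\mult(\reduc{Q},\ul{r}_1)\le s$.

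\emph{Conclusion.} Step 1 applied to $Q$ at $\ul{r}_1$, with its successor $R$, gives $\mult(\reduc{Q},\ul{r}_1)\ge t'$. Chaining with Steps 1–3 yields $t\ge t'$.

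I expect the main obstacle to be the bookkeeping in Steps 1 and 2, namely justifying that valuations can be read degree by degree. Both the integrality of the Taylor coefficients and the disjointness of monomials across degrees must be stated explicitly. Once that is in place, everything else is immediate.
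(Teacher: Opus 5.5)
Your proof is correct and follows the paper's argument step for step. Step~1 is the paper's observation that $p^t \mid p^h P_h$ forces $p \mid P_h$ for $h<t$, phrased contrapositively through the lowest degree $m$ with a unit coefficient; Step~2 reads $\deg \reduc{Q} \le t$ off $Q=\sum_h p^{h-t}P_h$; Step~3 is ``multiplicity $\le$ degree''; and $t \ge t'$ comes from applying Step~1 to $Q$ at $\ul{r}_1$, exactly as in the paper. Your remarks on the integrality of the Taylor coefficients (with the correct normalization $\partial^{\ul{i}}P(\ul{r}_0)/(i_1!\cdots i_n!)$) and on the absence of cancellation between homogeneous components of different degrees only make explicit what the paper uses tacitly.
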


An important consequence is that since $t \ge t'$, the thickness decreases as one moves upward along the trunk.
Thus, along an infinite branch, the thickness eventually becomes constant.

\begin{proof}
	\sauteligne
	\begin{itemize}
		
		\item Let $P(\ul{r}_0 + \ul{x}) = \sum_{h=0}^{d} P_h(\ul{x})$ be the decomposition into a sum of homogeneous polynomials.
		If the thickness of $P$ at $\ul{r}_0$ is $t$, then
		$p^t$ divides $P(\ul{r}_0 + p\ul{x})$, hence divides
		$p^h P_h(\ul{x})$ for all $h$.
		In particular, for $h<t$, $p$ divides $P_h(\ul{x})$.
		Therefore $\mult(\reduc{P},\ul{r}_0) \ge t$.
		
		\item Since $P(\ul{r}_0 + \ul{x}) = p^t Q(\ul{x})$, we have
		$Q(\ul{x}) = \sum_{h=0}^{d} p^{h-t} P_h(\ul{x})$.
		Thus for $h > t$, $p$ divides $p^{h-t} P_h(\ul{x})$.
		Hence $\deg \reduc{Q}(\ul{x}) \le t$, so $s \le t$.
		
		\item We always have $\mult(\reduc{Q}, \ul{r}_1) \le \deg(\reduc{Q}) = s$.
		
		\item Finally, by the very first inequality of this lemma, applied to $Q(\ul{x})$, we have
		$\mult(\reduc{Q}, \ul{r}_1) \ge t'$.
		Hence in particular $t \ge t'$.
	\end{itemize}
\end{proof}

\subsection{Hensel's lemma}
\label{ssec:hensel}

We have already observed that a polynomial has positive thickness at $\ul{r}_0$ if and only if
$P(\ul{r}_0) \equiv 0 \pmod{p}$.
We now study the case of thickness $t=1$.
According to Lemma~\ref{lem:taylor}, if $t=1$ then:
\begin{itemize}
	\item either $p \mid P(\ul{r}_0)$ and $p^2 \notdivides P(\ul{r}_0)$,
	\item or $p \mid P(\ul{r}_0)$ and there exists $1 \le i_0 \le n$ such that
	$p \notdivides \frac{\partial P}{\partial x_{i_0}}(\ul{r}_0)$.
\end{itemize}
Both cases may occur simultaneously.

\begin{lemma}
	\label{lem:onefinite} 
	If $t=1$ and all partial derivatives vanish:
	$\frac{\partial P}{\partial x_{i}}(\ul{r}_0) \equiv 0 \pmod{p}$ for $1\le i \le n$,
	then the trunk stops immediately after this vertex of thickness~$1$.
\end{lemma}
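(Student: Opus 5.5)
Let $Q$ be the successor of $P$ at $\ul{r}_0$, so that $P(\ul{r}_0+p\ul{x}) = pQ(\ul{x})$. We have to show that the reduction $\reduc{Q}$ has no zero in $(\Zz/p\Zz)^n$. Then the equation $Q(\ul{x})\equiv 0 \pmod p$ has no solutions, and by Definition~\ref{def:trunk} no vertex of height $k+1$ is attached to this vertex.

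The plan is to compute $\reduc{Q}$ explicitly using the Taylor expansion of the previous subsection. Write
\[
P(\ul{r}_0+p\ul{x}) = \sum_{h=0}^{d} p^h P_h(\ul{x}),
\]
where $P_h$ is the homogeneous Taylor component of degree $h$ at $\ul{r}_0$. The components $P_h$ have coefficients $\frac{1}{\ul{i}!}\partial^{\ul{i}}P(\ul{r}_0)$, which are integers, whereas the coefficients $\frac{1}{h!}\partial^{\ul{i}}P$ in the statement of Lemma~\ref{lem:taylor} need not be. Dividing by $p$ gives
\[
Q(\ul{x}) = \frac{P(\ul{r}_0)}{p} + P_1(\ul{x}) + \sum_{h\ge 2} p^{h-1}P_h(\ul{x}).
\]
Every term with $h\ge2$ is divisible by $p$. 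The linear term is $P_1(\ul{x}) = \sum_i \frac{\partial P}{\partial x_i}(\ul{r}_0)\,x_i$, and by hypothesis it vanishes modulo $p$. Hence $\reduc{Q}$ is the constant $P(\ul{r}_0)/p \bmod p$.

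It remains to see that this constant is nonzero. Since $t=1$, the discussion of the two cases preceding the lemma applies. The second case requires some $\partial P/\partial x_{i_0}(\ul{r}_0)\not\equiv 0\pmod p$, which is excluded by the hypothesis. So we are in the first case, $p\mid P(\ul{r}_0)$ and $p^2\notdivides P(\ul{r}_0)$. One can also argue directly: if $p^2$ divided $P(\ul{r}_0)$, then every coefficient of $P(\ul{r}_0+p\ul{x})$ would be divisible by $p^2$, so the thickness would be at least $2$, a contradiction. Therefore $\reduc{Q}$ is a nonzero constant and has no roots modulo $p$.

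The only delicate point is the integrality of the Taylor coefficients, since Lemma~\ref{lem:taylor} is written with $\frac{1}{|\ul{i}|!}$. I would use the integer-valued form $\frac{1}{\ul{i}!}\partial^{\ul{i}}P$, equivalently the plain expansion of $P(\ul{r}_0+\ul{y})$ in powers of $\ul{y}$. This form makes the terms $p^{h-1}P_h$ divisible by $p$ for $h\ge 2$, including $h\ge p$. With that, the argument is immediate.
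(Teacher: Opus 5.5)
Your proof is correct and follows essentially the paper's argument. The paper also observes that $P(\ul{r}_0+p\ul{x})=c_0+p^2R(\ul{x})$, so the successor $c_0/p+pR(\ul{x})$ reduces modulo $p$ to a nonzero constant because $t=1$. Your explicit treatment of the linear term, which is only divisible by $p$ rather than absent, and of the integrality of the Taylor coefficients is slightly more careful than the paper's one-line version.
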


\begin{proof}
	By assumption,
	$P(\ul{r}_0+\ul{x}) = c_0 + \text{terms of degree $\ge 2$}$, hence
	\[
	P(\ul{r}_0+p\ul{x}) = c_0 + p^2 R(\ul{x}).
	\]
	We assumed $t=1$, so $p \mid c_0$ and $p^2 \notdivides c_0$.
	Thus
	\[
	P(\ul{r}_0+p\ul{x}) = p\left( \frac{c_0}{p} + p R(\ul{x}) \right).
	\]
	Therefore the successor of $P$ is
	$Q(\ul{x}) = \frac{c_0}{p} + p R(\ul{x})$.
	Since $\frac{c_0}{p}$ is not divisible by $p$, the congruence
	$\reduc{Q}(\ul{x}) \equiv 0 \pmod{p}$ has no solutions, and hence the trunk stops at this point.
\end{proof}

In the case of thickness~$1$, when one of the partial derivatives does not vanish at $\ul{r}_0$,
we say that we are in the \emph{Hensel lemma situation}.
In this case as well, we obtain a complete description of the trunk.

\begin{definition}
	\label{def:henseltree}
	A \defi{Hensel tree} is an infinite tree in which each vertex admits exactly $p^{n-1}$ outgoing edges and where all thicknesses are equal to~$1$.
\end{definition}

\begin{lemma}
	\label{lem:hensel}
	If $P(\ul{r}_0) \equiv 0 \pmod{p}$ and there exists $1 \le i \le n$ such that
	$\frac{\partial P}{\partial x_{i}}(\ul{r}_0) \not\equiv 0 \pmod{p}$,
	then the part of the trunk of $P$ issued from $\ul{r}_0$ is a Hensel tree.
\end{lemma}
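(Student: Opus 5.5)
The plan is to show that the Hensel hypothesis reproduces itself one level up, and then to conclude by induction on the height.

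\textbf{Thickness at $\ul{r}_0$.} By Lemma~\ref{lem:taylor}, the terms of order $|\ul{i}|\ge 1$ all have valuation at least $1$. The term $\ul{i}=e_{i}$, with $e_i$ the $i$-th basis vector, has valuation exactly $1$, since $\partial_{x_i}P(\ul{r}_0)$ is a unit. The constant term $P(\ul{r}_0)$ has valuation at least $1$ by hypothesis. Hence the thickness is $t=1$.

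\textbf{Shape of the successor.} Write the Taylor expansion
\[
P(\ul{r}_0+p\ul{x}) = P(\ul{r}_0) + p\sum_{j=1}^n \partial_{x_j}P(\ul{r}_0)\,x_j + p^2 R(\ul{x}),
\]
where $R$ has coefficients in $\ZZbb$. This holds because $\frac{1}{\ul{i}!}\partial^{\ul{i}}P$ has integral coefficients and $p^{|\ul{i}|}$ is divisible by $p^2$ when $|\ul{i}|\ge2$. The successor is therefore
\[
Q(\ul{x}) = \frac{P(\ul{r}_0)}{p} + \sum_j \partial_{x_j}P(\ul{r}_0)\,x_j + pR(\ul{x}).
\]
Its reduction $\reduc{Q}$ is an affine polynomial $c+\sum_j a_j x_j$ over $\Zz/p\Zz$, with $a_i\neq 0$.

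\textbf{Counting the children and preserving the hypothesis.} Such a linear equation has exactly $p^{n-1}$ solutions $\ul{r}_1\in(\Zz/p\Zz)^n$: choose the coordinates other than $x_i$ freely and solve for $x_i$. So the vertex has exactly $p^{n-1}$ outgoing edges. At each solution $\ul{r}_1$ we have $Q(\ul{r}_1)\equiv 0\pmod p$. Moreover $\partial_{x_i}Q(\ul{r}_1)\equiv \partial_{x_i}P(\ul{r}_0)+p\,\partial_{x_i}R(\ul{r}_1)\equiv a_i\not\equiv0\pmod p$. Thus $Q$ satisfies the same hypothesis at $\ul{r}_1$, with the same index $i$.

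\textbf{Induction on the height.} Every vertex in the part of the trunk issued from $\ul{r}_0$ therefore has thickness $1$ and exactly $p^{n-1}$ children, each again satisfying the hypothesis. In particular the tree never stops, because $p^{n-1}\ge1$, and so it is an infinite Hensel tree.

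The only delicate point is the integrality of the Taylor coefficients $\frac{1}{\ul{i}!}\partial^{\ul{i}}P$. Note that the paper writes $\frac{1}{|\ul{i}|!}$; the correct multinomial normalisation is $\frac{1}{\ul{i}!}$. These coefficients are integral, since they are simply the coefficients of the expansion of $P(\ul{r}_0+\ul{x})$, and this is what guarantees that $R$ has coefficients in $\ZZbb$. I expect no real obstacle beyond this.
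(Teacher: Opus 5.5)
Your proof is correct and follows the paper's route. Thickness $1$ comes from the Taylor expansion, the affine reduction $\reduc{Q}$ has a unit coefficient and so gives exactly $p^{n-1}$ children, the derivative condition propagates to each child, and induction on the height concludes. Evaluating $\partial_{x_i}Q$ at $\ul{r}_1$ (the paper writes $\ul{r}_0$ there) and using the $\frac{1}{\ul{i}!}$ normalisation of the Taylor coefficients are both valid small corrections, not a change of method.
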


\begin{proof}
	Without loss of generality, assume that
	$\frac{\partial P}{\partial x_1}(\ul{r}_0) \not\equiv 0 \pmod{p}$.
	Write
	$P(\ul{r}_0+\ul{x}) = c_0 + c_1 x_1 + c_2 x_2+\cdots +c_n x_n + \cdots$.
	By Lemma~\ref{lem:taylor}, we know that $\ul{r}_0$ has thickness~$1$, which gives:
	\[
	P(\ul{r}_0+p\ul{x}) = p \left( \frac{c_0}{p} + c_1 x_1 + c_2 x_2+\cdots +c_n x_n + pR(\ul{x}) \right).
	\]
	The successor of $P$ is therefore
	$Q(\ul{x}) = \frac{c_0}{p} + c_1 x_1 + c_2 x_2+\cdots +c_n x_n + pR(\ul{x})$.
	Its reduction modulo $p$ is
	$\reduc{Q}(\ul{x}) = \frac{c_0}{p} + c_1 x_1 + c_2 x_2+\cdots +c_n x_n$,
	with $c_1 \not\equiv 0 \pmod{p}$ by assumption.
	Thus, for any choice of $x_2,\ldots,x_n \in \Zz/p\Zz$, there exists a unique
	$x_1 \in \Zz/p\Zz$ giving a solution to the linear equation
	$\frac{c_0}{p} + c_1 x_1 + c_2 x_2+\cdots +c_n x_n \equiv 0 \pmod{p}$.
	Hence $\reduc{Q}(\ul{x}) \equiv 0 \pmod{p}$ admits exactly $p^{n-1}$ solutions.
	
	Let $\ul{r}_1$ be such a solution.
	Since $\reduc{Q}(\ul{r}_1) \equiv 0 \pmod{p}$, the thickness is at least~$1$.
	And since
	$\frac{\partial Q}{\partial x_1}(\ul{r}_0) \equiv c_1 \not\equiv 0 \pmod{p}$,
	we have $t=1$ and we are again in the situation of Hensel’s lemma.
	The conclusion then follows by induction.
\end{proof}

\section{The number of solutions}
\label{sec:count}

\subsection{Formula}

We recall the theorem stated in the introduction. We now have all the ingredients to prove it.
\begin{theorem}
	\label{th:count}
	The number of solutions to the equation $P(\ul{x}) \equiv 0 \pmod{p^e}$ in $(\Zz/ p^e \Zz)^n$ is:
	\[
	N_e = \sum_{\substack{(\ul{r},k) \in \Trunk(P) \\ \varphi(\ul{r},k) -t_k \, < \, e \, \le \, \varphi(\ul{r},k)}} 
	p^{n(e-k)}.	
	\]
\end{theorem}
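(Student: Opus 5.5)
The plan is to count solutions by partitioning them according to the unique trunk vertex given by Theorem~\ref{th:trunktotree}. Fix $e \ge 1$ and consider the set of solutions $\ul{x}_0 \in (\Zz/p^e\Zz)^n$ of $P(\ul{x}_0) \equiv 0 \pmod{p^e}$. The second part of that theorem assigns to each solution exactly one vertex $(\ul{r},k) \in \Trunk(P)$ with $\ul{x}_0 \equiv \ul{r} \pmod{p^k}$ and $\varphi(\ul{r},k) - t_k < e \le \varphi(\ul{r},k)$. So $N_e$ equals the sum, over the trunk vertices satisfying this inequality, of the number of solutions attached to each vertex. It remains to show that each such vertex contributes exactly $p^{n(e-k)}$.

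Fix such a vertex $(\ul{r},k)$. First I check that $k \le e$. By Lemma~\ref{lem:inequalities} the thicknesses are non-increasing along the path, and each is at least $1$ since trunk vertices are solutions mod $p$ of the successive polynomials. Hence $t_i \ge t_k$ for all $i \le k$, so $\varphi(\ul{r},k) - t_k = t_1+\cdots+t_{k-1} \ge k-1$. The inequality $e > \varphi(\ul{r},k) - t_k$ then gives $e \ge k$. (The root case $k=0$ is excluded for $e\ge1$, since the root has no thickness and would need separate conventions; $N_0=1$ is handled separately.)

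Next I identify the set of $\ul{x}_0$ attached to $(\ul{r},k)$. By the first part of Theorem~\ref{th:trunktotree}, every $\ul{x}_0 \in (\Zz/p^e\Zz)^n$ with $\ul{x}_0 \equiv \ul{r} \pmod{p^k}$ is a solution, because $\varphi(\ul{r},k)\ge e$; equivalently, this follows directly from Lemma~\ref{lem:toptree}. Such $\ul{x}_0$ is attached to $(\ul{r},k)$ rather than to a descendant vertex. Indeed, any trunk vertex compatible with $\ul{x}_0$ lies on the single branch through $\ul{x}_0$, and uniqueness shows that only one vertex on that chain satisfies the window condition. Since the windows $(\varphi-t,\varphi]$ along a branch are consecutive and disjoint, that vertex must be $(\ul{r},k)$. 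The number of lifts of $\ul{r} \bmod p^k$ to $(\Zz/p^e\Zz)^n$ is $p^{n(e-k)}$, which completes the count.

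The main obstacle is the bookkeeping in this last step. One must make sure that the uniqueness statement really makes the fans disjoint at level $e$, and that distinct trunk vertices at the same height $k$ correspond to distinct residues $\ul{r} \bmod p^k$, so that no lift is counted twice. I would verify this by noting that the trunk is a subtree of $\Omega_p$, so its vertices at height $k$ are distinct elements of $(\Zz/p^k\Zz)^n$. I would also verify it by reading the proof of Theorem~\ref{th:trunktotree}: the most recent trunk ancestor of $(\ul{x}_0,e)$ is determined by $\ul{x}_0$ alone.
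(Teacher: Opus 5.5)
Your proposal is correct and follows the same route as the paper: you partition the solutions modulo $p^e$ according to the unique trunk vertex whose window $\varphi(\ul{r},k)-t_k<e\le\varphi(\ul{r},k)$ contains $e$ (Theorem~\ref{th:trunktotree}), then count the $p^{n(e-k)}$ lifts of $\ul{r}$ in that fan. Your extra checks make explicit what the paper's short proof leaves implicit: such a vertex has height $k\le e$ because every trunk thickness is at least $1$, and the fans are disjoint at level $e$.
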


\begin{proof}
	Theorem~\ref{th:trunktotree} tells us that each vertex $(\ul{r}, k)$ of the trunk provides, in a unique way, the solutions $(\ul{x}_0, e)$ of
	$P(\ul{x}) \equiv 0 \pmod{p^e}$ if and only if
	$(\ul{x}_0, e)$ is a successor of $(\ul{r}, k)$ and
	$\varphi(\ul{r},k) -t_k  <  e  \le  \varphi(\ul{r},k)$.
	
	Therefore, the number of solutions arising from the fan of $(\ul{r}, k)$ with height exactly $e$ is $(p^{n})^{e-k}$.
	This yields the desired formula.
\end{proof}

\subsection{Generating function}

\begin{definition}
	\label{def:poincare}
	The \defi{Poincaré series} is:
	\[
	S(T) = \sum_{e \geq 0} N_e \frac{T^e}{p^{ne}},
	\]
	where $N_e$ denotes the number of solutions to the equation $P(\ul{x}) \equiv 0 \pmod{p^e}$
	(with the convention that $N_0 = 1$).
	We recall that $n$ is the number of variables.
\end{definition}

\subsection{Formula from the trunk}

We will use Theorem~\ref{th:count} in order to compute the Poincaré series directly from the trunk.

\begin{theorem}
	\label{th:xylon}
	\[
	S(T) = \sum_{(\ul{r},k) \in \Trunk(P)} \frac{T^{\varphi(\ul{r}, k) - t_k + 1}}{p^{nk}}\big( 1 + T + T^2 + \cdots + T^{t_k-1} \big)
	\]
\end{theorem}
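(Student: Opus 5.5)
The plan is to substitute the counting formula of Theorem~\ref{th:count} into the definition of $S(T)$ and then interchange the two summations. For every $e\ge 1$ this gives
\[
N_e \frac{T^e}{p^{ne}} = \sum_{\substack{(\ul{r},k) \in \Trunk(P) \\ \varphi(\ul{r},k) -t_k < e \le \varphi(\ul{r},k)}} \frac{p^{n(e-k)}}{p^{ne}}\, T^e = \sum_{\substack{(\ul{r},k) \in \Trunk(P) \\ \varphi(\ul{r},k) -t_k < e \le \varphi(\ul{r},k)}} \frac{T^e}{p^{nk}}.
\]
The factor $p^{ne}$ cancels exactly, which is why the normalization in the Poincaré series was chosen this way.

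Summing over $e$ and exchanging the order of summation, each vertex $(\ul{r},k)$ collects the terms for exactly the heights $e=\varphi(\ul{r},k)-t_k+1,\ldots,\varphi(\ul{r},k)$. Its contribution is therefore
\[
\frac{1}{p^{nk}}\big(T^{\varphi(\ul{r},k)-t_k+1}+\cdots+T^{\varphi(\ul{r},k)}\big) = \frac{T^{\varphi(\ul{r},k) - t_k + 1}}{p^{nk}}\big( 1 + T + \cdots + T^{t_k-1} \big),
\]
which is precisely the summand in the statement. The exchange is legitimate in $\Qq[[T]]$ provided each coefficient of $T^e$ receives only finitely many contributions.

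To check this finiteness, note that every non-root vertex of the trunk has thickness $t_j\ge 1$, since it comes from a solution modulo $p$. Hence $\varphi(\ul{r},k-1)=t_1+\cdots+t_{k-1}\ge k-1$. The condition $\varphi(\ul{r},k)-t_k<e$ then forces $k-1<e$, so only vertices of height $k\le e$ contribute to the coefficient of $T^e$. There are finitely many of these, at most $p^{nk}$ at height $k$.

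The main obstacle is the bookkeeping at the root $(\ul{0},0)$, which has no thickness, together with the term $N_0=1$. I would adopt the convention $t_0=1$ for the root; this is consistent with $P$ being $p$-primitive, since its own thickness is then $0$. With $\varphi(\ul{0},0)=0$, the root's summand is $T^{0}\cdot 1/p^0=1=N_0$. The root never contributes for $e\ge1$, because $\varphi(\ul{0},0)=0<e$. Every $e\ge 1$ is accounted for by Theorem~\ref{th:count}, and every solution is counted exactly once by the uniqueness part of Theorem~\ref{th:trunktotree}, so the two sides agree coefficientwise.
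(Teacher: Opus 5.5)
Your proof is correct and follows the paper's argument: you substitute Theorem~\ref{th:count} into $S(T)$, cancel $p^{ne}$, and exchange the sums so that each trunk vertex collects $\frac{1}{p^{nk}}\big(T^{\varphi-t_k+1}+\cdots+T^{\varphi}\big)$. Your convention $t_0=1$ reproduces the paper's convention $\Xi_0(T)=1$ for the root. Your observation that only vertices of height $k\le e$ contribute to the coefficient of $T^e$ justifies the interchange, a step the paper leaves implicit.
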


We define
\[
\Xi_k(T) = \frac{T^{\varphi(\ul{r}, k) - t_k + 1}}{p^{nk}}\big( 1 + T + T^2 + \cdots + T^{t_k-1} \big).
\]
These functions of $T$ are called \defi{xylon functions}, and the proposition will be called
the \defi{xylon formula}:
\[
S(T) = \sum_{(\ul{r},k) \in \Trunk(P)} \Xi_k(T).
\]

Remarks. By convention, the contribution of the root of the trunk is $\Xi_0(T) = 1$.
On the other hand, if $t_k = 1$, then the sum $1 + T + T^2 + \cdots + T^{t_k-1}$ reduces to $1$.

\begin{proof}
	To simplify the notation, we write $t = t_k$ and $\varphi = \varphi(\ul{r}, k)$.
	We reorganize the sum $S(T)$ using Theorem~\ref{th:count}:
	\begin{align*}
		S(T) 
		&= \sum_{e \geq 0} N_e \frac{T^e}{p^{ne}} \\
		&= \sum_{e \geq 0} \quad  \sum_{\substack{(\ul{r},k) \in \Trunk(P) \\ \varphi-t <  e \le \varphi}} p^{n(e-k)} \frac{T^e}{p^{ne}} \\
		&= \sum_{(\ul{r},k) \in \Trunk(P)} \quad \sum_{\varphi-t < e \le \varphi} \frac{T^e}{p^{nk}} \\
		&= \sum_{(\ul{r},k) \in \Trunk(P)} \frac{1}{p^{nk}} \big( T^{\varphi-t+1} + T^{\varphi-t+2} + \cdots + T^{\varphi} \big)	\\	
		&= \sum_{(\ul{r},k) \in \Trunk(P)} \frac{T^{\varphi- t + 1}}{p^{nk}}\big( 1 + T + T^2 + \cdots + T^{t-1} \big)
	\end{align*}
\end{proof}

We now give an application of the xylon formula to two particular cases.

\subsection{Infinite stalk of constant thickness}

\begin{lemma}
	\label{lem:Sstalk}
	The generating series associated with an infinite stalk of constant thickness $t=u$ issued from the root is a rational fraction:
	\[
	S(T) = 1 \  + \ \big( 1+T+T^2+\cdots + T^{u-1} \big)\frac{T}{p^n} \left( 1-\frac{T^u}{p^n} \right)^{-1} \quad \in \Qq[T]
	\]
\end{lemma}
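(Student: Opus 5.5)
We apply the xylon formula of Theorem~\ref{th:xylon} directly. An infinite stalk issued from the root has exactly one vertex at each height $k \ge 0$. The root contributes $\Xi_0(T)=1$. Every other vertex $(\ul{r},k)$ with $k\ge1$ has thickness $t_k=u$. So the first step is to compute the tree-top function along the stalk: by Definition~\ref{def:treetop}, $\varphi(\ul{r},k)=t_1+\cdots+t_k = ku$.

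Substituting into the xylon function gives, for $k\ge1$,
\[
\Xi_k(T) = \frac{T^{ku-u+1}}{p^{nk}}\big(1+T+\cdots+T^{u-1}\big)
= \big(1+T+\cdots+T^{u-1}\big)\,\frac{T}{p^n}\left(\frac{T^u}{p^n}\right)^{k-1}.
\]
Summing over $k\ge1$ means factoring out $\big(1+T+\cdots+T^{u-1}\big)\frac{T}{p^n}$ and summing a geometric series in $T^u/p^n$. This yields
\[
\sum_{k\ge0}\left(\frac{T^u}{p^n}\right)^{k} = \left(1-\frac{T^u}{p^n}\right)^{-1}.
\]
Adding the root contribution $1$ gives the stated formula.

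The only point requiring care is checking the exponent $\varphi(\ul{r},k)-t_k+1 = (k-1)u+1$ and the factor $p^{-nk}=p^{-n}(p^{-n})^{k-1}$, so that the reindexing $k\mapsto k-1$ produces exactly the claimed factor $T/p^n$.

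Rationality should be read in the sense of formal power series: the identity holds in $\Qq[[T]]$, since the geometric series is a formal inverse. The result is visibly a quotient of polynomials, namely
\[
1+\frac{T(1+\cdots+T^{u-1})}{p^n-T^u}\cdot 1 \quad\text{(after multiplying numerator and denominator by } p^n\text{)},
\]
which is what the paper means by "$\in\Qq[T]$" for rational fractions. No convergence issue arises, so there is no real obstacle beyond this bookkeeping.
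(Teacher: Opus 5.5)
Your proposal is correct and follows the paper's own proof: apply the xylon formula with one vertex per height, use $\varphi(\ul{r},k)=ku$ to get $\Xi_k(T)=\frac{T^{(k-1)u+1}}{p^{nk}}(1+T+\cdots+T^{u-1})$ for $k\ge1$, reindex, and sum the geometric series in $T^u/p^n$. Your reading of the paper's ``$\in\Qq[T]$'' as membership in $\Qq(T)$, witnessed by $1+\frac{T(1+\cdots+T^{u-1})}{p^n-T^u}$, is also correct.
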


\begin{proof}
	By definition, at each height $k$ there is a unique vertex whose thickness equals $t=u$.
	Then the tree-top function at height $k$ equals $\varphi_k = k u$.
	Thus:
	\begin{align*}
		S(T) 
		&= \sum_{k \ge 0} \Xi_k(T) \\
		&=  1 \  + \ \sum_{k>0} \frac{T^{(k-1)u+1}}{p^{nk}} \big( 1+T+T^2+\cdots + T^{u-1} \big) \\
		&=  1 \  + \ \big( 1+T+T^2+\cdots + T^{u-1} \big)\sum_{k \ge 0}  \frac{T^{ku+1}}{p^{nk+n}}  \\     
		&=  1 \  + \ \big( 1+T+T^2+\cdots + T^{u-1} \big)\sum_{k \ge 0} \frac{T}{p^n}\left( \frac{T^u}{p^n} \right)^k \\
		&= 1 \  + \ \big( 1+T+T^2+\cdots + T^{u-1} \big)\frac{T}{p^n} \left( 1-\frac{T^u}{p^n} \right)^{-1}
	\end{align*}
\end{proof}

\subsection{The Hensel lemma case}

A Hensel tree is an infinite tree in which each vertex has exactly $p^{n-1}$ outgoing edges, all vertices having thickness~$1$.
\begin{lemma}
	\label{lem:Shensel}
	The generating series associated with a Hensel tree is a rational fraction:
	\[
	S(T) =\left( 1-\frac{T}{p} \right)^{-1} \ \in \Qq[T]
	\]
\end{lemma}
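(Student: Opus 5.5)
The plan is to apply the xylon formula (Theorem~\ref{th:xylon}) directly to the Hensel tree, treating it as a trunk rooted at $(\ul{0},0)$ in which every non-root vertex has thickness~$1$. The first step is to count the vertices at each height. The root has $p^{n-1}$ outgoing edges, and so does every other vertex (Definition~\ref{def:henseltree}). So by induction there are exactly $p^{(n-1)k}$ vertices at height $k$.

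The second step is to compute the xylon function of each vertex. Since all thicknesses equal $1$, the tree-top function at a vertex of height $k$ is $\varphi(\ul{r},k)=k$. With $t_k=1$, the polynomial factor $1+T+\cdots+T^{t_k-1}$ reduces to $1$, which gives
\[
\Xi_k(T)=\frac{T^{k-1+1}}{p^{nk}}=\frac{T^k}{p^{nk}}.
\]
This also holds for the root, where $\Xi_0(T)=1$ by convention.

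Summing over all vertices, grouped by height, gives
\[
S(T)=\sum_{k\ge0} p^{(n-1)k}\,\frac{T^k}{p^{nk}}=\sum_{k\ge 0}\left(\frac{T}{p}\right)^k=\left(1-\frac{T}{p}\right)^{-1}.
\]
This is a geometric series, and it is rational in $\Qq[T]$ as a formal identity.

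The only delicate point is the convention at the root. In the Hensel tree viewed as a standalone trunk, the root is counted with $\Xi_0=1$ and has the same branching $p^{n-1}$. I will state this explicitly and check it against Lemma~\ref{lem:hensel}, where the first solutions $\ul{r}_1$ of $\reduc{Q}$ number exactly $p^{n-1}$. Everything else is routine.

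As a sanity check, I will verify the result against Theorem~\ref{th:count}. It gives $N_e=p^{(n-1)e}\,p^{n(e-e)}=p^{(n-1)e}$, since only the vertices at height $k=e$ contribute. Then $N_eT^e/p^{ne}=(T/p)^e$, in agreement with the formula above.
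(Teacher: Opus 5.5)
Your proof is correct and is essentially the paper's argument: there are $p^{(n-1)k}$ vertices at height $k$, each contributes $\Xi_k(T)=T^k/p^{nk}$ via the xylon formula, and summing gives the geometric series $\sum_{k\ge0}(T/p)^k=(1-T/p)^{-1}$. Your extra cross-check via Theorem~\ref{th:count}, giving $N_e=p^{(n-1)e}$, is a nice consistency test but not needed for the proof.
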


\begin{proof}
	For a given height $k$, there are $(p^{n-1})^k$ vertices. Each vertex of height $k$ has tree-top value $\varphi_k =k$ and
	its contribution to the sum is $\Xi_k(T) = \frac{T^k}{p^{nk}}$.
	Thus:
	\[
	S(T) 
	= \sum_{k>0} p^{(n-1)k} \Xi_k(T) 
	= 1+\sum_{k>0} p^{(n-1)k} \frac{T^k}{p^{nk}} 
	= \sum_{k\ge0}\left( \frac{T}{p} \right)^k 
	= \left( 1-\frac{T}{p} \right)^{-1}
	\]
\end{proof}

For a Hensel tree that would not be attached to the root but to a vertex of height $k_0$ and with tree-top value $\varphi_0$, one would similarly show:
\[
S_{k_0,\varphi_0}(T) = \frac{T^{\varphi_0}}{p^{nk_0}} \left(1-\frac{T}{p}\right)^{-1}.
\]

\section{Normalization}
\label{sec:normal}

The purpose of this section is to provide preparatory results and to reduce to the simplest possible form of the bivariate polynomials to be studied.
We will consider only \defi{non-degenerate} polynomials, that is, $\deg_x P \ge 1$ and $\deg_y P \ge 1$.

\subsection{The trunk is invariant under translation}

\begin{lemma}
	\label{lem:translation}
	Let $P(x,y) \in \Zz_p[x,y]$ and $\alpha,\beta \in\Zz_p$.
	Define $Q(x,y) \in \Zz_p[x,y]$ by $Q(x,y) = P(x-\alpha, y-\beta)$.
	Then the trunks of $P$ and $Q$ are isomorphic (i.e.~there exists a bijection between these trees that preserves the thicknesses).
\end{lemma}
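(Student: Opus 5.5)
The idea is to use the map $\tau:\Zz_p^2\to\Zz_p^2$, $(x,y)\mapsto(x+\alpha,y+\beta)$, which sends a solution of $P$ to a solution of $Q$. Indeed, $Q(\ul{x}+(\alpha,\beta))=P(\ul{x})$. We want to show that $\tau$ induces a bijection between the trunks that preserves heights and thicknesses. Since trunk vertices are written in the normalized form $\ul{r}\in\llbracket 0,p^k-1\rrbracket^2$, we define the candidate bijection on height-$k$ vertices by
\[
(\ul{r},k)\longmapsto\big((\ul{r}+(\alpha,\beta))\,\%\,p^k,\;k\big),
\]
where the remainder is taken componentwise (and $\alpha,\beta$ are reduced modulo $p^k$ first). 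This map is compatible with the edges of $\Omega_p$: if $\ul{r}'\equiv\ul{r}\pmod{p^k}$, then the images are also congruent modulo $p^k$. So it is an automorphism of $\Omega_p$. It remains to show that it sends $\Trunk(P)$ onto $\Trunk(Q)$ and preserves thicknesses. Its inverse is the translation by $(-\alpha,-\beta)$, so we only need one inclusion; the other follows by symmetry.

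The key step is a strengthened version of Lemma~\ref{lem:toptree}, proved by induction on $k$. Let $(\ul{r},k)\in\Trunk(P)$ with attached polynomial $P_k$. Write $\ul{r}+(\alpha,\beta)=\ul{r}'+p^k\ul{\gamma}_k$, where $\ul{r}'$ is the reduced representative and $\ul{\gamma}_k\in\Zz_p^2$. The claim is that $(\ul{r}',k)\in\Trunk(Q)$, with the same thicknesses along the path, and that its attached polynomial is $Q_k(\ul{x})=P_k(\ul{x}-\ul{\gamma}_k)$.

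The induction step goes as follows. The solutions of $P_k\equiv 0\pmod p$ correspond bijectively to the solutions of $Q_k\equiv 0 \pmod p$ via $\ul{r}_k\mapsto \ul{r}_k+\ul{\gamma}_k \bmod p$. Write $\ul{r}_k+\ul{\gamma}_k=\ul{s}_k+p\ul{\delta}$ with $\ul{s}_k$ reduced. Then
\[
Q_k(\ul{s}_k+p\ul{x})=P_k(\ul{r}_k+p(\ul{x}-\ul{\delta})).
\]
Substituting $\ul{x}\mapsto\ul{x}-\ul{\delta}$ is an automorphism of $\Zz_p[\ul{x}]$, so it preserves the largest power of $p$ dividing a polynomial. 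Hence the two thicknesses coincide, and the new attached polynomial is $Q_{k+1}(\ul{x})=P_{k+1}(\ul{x}-\ul{\delta})$. Finally, we check that the new trunk vertex $\ul{r}'+p^k\ul{s}_k$ is congruent to $\ul{r}+p^k\ul{r}_k+(\alpha,\beta)$ modulo $p^{k+1}$, so it is exactly the image of $(\ul{r}+p^k\ul{r}_k,k+1)$ under the map.

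The only delicate point is bookkeeping. Because $\alpha,\beta\in\Zz_p$ are not reduced, carries propagate from one level to the next, and these carries are precisely the shifts $\ul{\gamma}_k$ and $\ul{\delta}$. This is why the induction hypothesis must carry the shifted polynomial $Q_k$, and not just the vertex. The thickness invariance itself is immediate from the invariance of $p$-content under translation of the variables. The base case $k=0$ holds with $\ul{\gamma}_0=(\alpha,\beta)$, since $Q_0=Q=P(\ul{x}-(\alpha,\beta))$.
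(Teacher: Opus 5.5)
Your proof is correct and follows the same route as the paper: translating the variables gives a bijection between the roots modulo $p$, and it preserves thickness because translation preserves the $p$-content of a polynomial. As in the paper, the successor of $Q$ is a translate of the successor of $P$, so the result follows by induction up the tree. Your explicit tracking of the carries $\ul{\gamma}_k$ and $\ul{\delta}$ is slightly more careful than the paper, which writes the new root as $r+\alpha_0$ without reducing it modulo $p$.
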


\begin{corollary}
	\label{cor:translation}
	For an infinite stalk of a trunk, one may assume after translation that it lies above the roots $(0,0)$ at all heights.
\end{corollary}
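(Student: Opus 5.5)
Since the statement to prove is Corollary~\ref{cor:translation}, I will derive it directly from Lemma~\ref{lem:translation}, applied with a well-chosen translation vector $(\alpha,\beta)\in\Zz_p^2$.

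\emph{Step 1: build the translation vector from the stalk.} Consider an infinite stalk of the trunk, that is, an infinite path from the root
\[
(\ul 0,0),\ (\ul r^{(1)},1),\ (\ul r^{(2)},2),\ \ldots
\]
with $\ul r^{(k+1)} \equiv \ul r^{(k)} \pmod{p^k}$. Write $\ul r^{(k)} = \ul r_0 + p\ul r_1 + \cdots + p^{k-1}\ul r_{k-1}$. The infinite path defines a pair of $p$-adic integers $(a,b) = \sum_{i\ge0} p^i \ul r_i \in \Zz_p^2$, exactly as in the Remark on paths in $\Omega_p$. I take $(\alpha,\beta) = (-a,-b)$ and set $Q(x,y) = P(x+a,y+b) = P(x-\alpha,y-\beta)$.

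\emph{Step 2: make the isomorphism of Lemma~\ref{lem:translation} explicit.} I will check that the bijection is $(\ul r,k)\mapsto(\ul r - (a,b) \bmod p^k,\ k)$. From the identity
\[
Q(\ul s + p^k\ul x) = P(\ul s + (a,b) + p^k \ul x),
\]
the decomposition of Lemma~\ref{lem:toptree} for $P$ at $\ul r = \ul s + (a,b)$ transports to $Q$ at $\ul s$. The successive polynomials $P_k$ attached to the vertices are therefore identical, up to the change of labels of roots modulo $p$ at each step. Hence thicknesses, and thus tree-top values, are preserved.

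\emph{Step 3: conclude.} Under this bijection the stalk vertex $(\ul r^{(k)},k)$ is sent to $(\ul r^{(k)} - (a,b) \bmod p^k,\ k) = (\ul 0,k)$, because $\ul r^{(k)} \equiv (a,b) \pmod{p^k}$ by construction. So the stalk lies above the roots $(0,0)$ at every height, with the same thicknesses.

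The main obstacle is Step 2: one must make sure the labelling is consistent level by level. Since $a$ and $b$ are $p$-adic rather than ordinary integers, the translated polynomial has coefficients in $\Zz_p$ and not in $\Zz$. This is allowed, because $\ZZbb = \Zz_p$ is permitted throughout, and it is exactly why Lemma~\ref{lem:translation} is stated over $\Zz_p$.
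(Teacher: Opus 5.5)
Your proposal is correct and is essentially the paper's argument: translate by minus the $p$-adic limit point of the stalk and use the translation isomorphism of Lemma~\ref{lem:translation}, which moves roots level by level. The paper proves that lemma one step at a time (the successor of the translate is a translate of the successor), whereas you use the level-$k$ identity directly; for that you should add that $\varphi(\ul r,k)$ is the exact $p$-adic content of $P(\ul r+p^k\ul x)$, since successors are $p$-primitive by maximality of each thickness, and that this content is invariant under $\ul x\mapsto\ul x+\ul c$ with $\ul c\in\Zz_p^2$, so thicknesses and trunk membership transfer.
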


\begin{proof}
	\emph{Isomorphism between the roots of $P$ and $Q$ modulo $p$.}
	Let $(r,s) \in (\Zz/p\Zz)^2$ be a root of $P$ modulo $p$:
	\[
	P(r,s) \equiv 0 \pmod{p} 
	\iff Q(r+\alpha, s+\beta) \equiv 0 \pmod{p} 
	\iff Q(r+\alpha_0, s+\beta_0) \equiv 0 \pmod{p} 
	\]
	where we write $\alpha = \alpha_0 + \alpha_1 p + \alpha_2 p^2+\cdots = \alpha_0  + p\alpha'$, and similarly for $\beta$.
	Thus:
	\[
	(r,s) \text{ is a root of } P(x,y) \text{ modulo } p
	\iff (r',s')=(r+\alpha_0, s+\beta_0) \text{ is a root of } Q(x,y) \text{ modulo } p
	\]	
	
	\medskip
	
	\emph{Preservation of thicknesses.}
	Let $(r,s)$ be a root of thickness $t$ for $P$:
	\begin{align*}
		& P(r+px, s+py) = p^t \tilde P(x,y) \\
		\iff\ & Q(r+\alpha+px, s+\beta+py) = p^t \tilde P(x,y)  \\
		\iff\ & Q(r+\alpha_0+p(x+\alpha'), s+\beta_0+p(y+\beta')) = p^t \tilde P(x,y)  \\		
		\iff\ & Q(r'+pX, s'+pY) = p^t \tilde P(X-\alpha',Y-\beta') \qquad \text{ with } X = x+\alpha', \; Y=y+\beta'  \\			
		\iff\ & Q(r'+pX, s'+pY) = p^t \tilde Q(X,Y)
	\end{align*}
	where we have defined $\tilde Q(X,Y) = \tilde P(X-\alpha',Y-\beta')$.
	Thus $Q$ also has thickness $t$ at the root $(r',s')$.
	
	\medskip
	
	\emph{Conclusion.}
	We have just shown that the successor $\tilde Q(X,Y) = \tilde P(X-\alpha', Y-\beta')$ is itself a translate of the successor $\tilde P(X,Y)$.
	By induction, moving up along the tree, this proves the result for all vertices.
\end{proof}

\subsection{Normalization}

We consider a polynomial $P(x,y)$ associated with an infinite stalk of constant thickness $\ge 2$.
We wish to find a simple expression for $P(x,y)$.
The guiding idea is the following: if a univariate polynomial $F(x)$ admits an infinite stalk of constant thickness equal to $u$ at the root $0$, then one can factor $F(x)$ in the form
$F(x) = x^u U(x)$.

\begin{lemma}
	\label{lem:normnew}
	\sauteligne
	\begin{enumerate}
		\item If $P(x,y) = F(x) + G(y)$ (with $p \notdivides P(x,y)$) admits an infinite stalk whose every vertex has thickness $\ge u$ at the root $(0,0)$, then, up to the symmetry $(x,y) \leftrightarrow (y,x)$,
		\[
		P(x,y) = x^u U(x) + p^\alpha y^v V(y) 
		\]
		with $\alpha \ge 0$, $v \ge u$, and $U(x)\in \Zz_p[x]$, $V(y) \in \Zz_p[y]$.
		
		\item Let $K \ge 0$. Let $U(x) = \sum_i u_i x^i$ and $V(y) = \sum_j v_j y^j$.
		By going sufficiently high on the stalk of the roots $(0,0)$, which we shall call the \emph{principal stalk}, one may assume that
		$U(x)\equiv u_0\pmod{p^K}$ and $V(y)\equiv v_0\pmod{p^K}$.
		
		\item If moreover the stalk has constant thickness $u$, then $U(0) \not\equiv 0 \pmod{p}$.
		At a sufficiently large level along the stalk, one can also place oneself in a situation where, in addition,
		$V(0) \not\equiv 0 \pmod{p}$ (by modifying the parameters $\alpha$, $v$, and the polynomial $V(y)$).
		
	\end{enumerate}	
	
\end{lemma}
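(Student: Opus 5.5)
Following Corollary~\ref{cor:translation}, the plan is to work with the stalk above $(0,0)$ at every height. That is, the polynomials $P_k$ attached to the principal stalk satisfy $P_{k}(p\ul{x}) = p^{t_{k+1}}P_{k+1}(\ul{x})$. By Lemma~\ref{lem:toptree}, $P(p^k x,p^k y)=p^{\varphi_k}P_k(x,y)$ with $\varphi_k\ge ku$.

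For part (1), write $F(x)=\sum_i f_i x^i$ and $G(y)=\sum_j g_j y^j$. Since the variables are separated, $P(p^kx,p^ky)=\sum_i f_ip^{ki}x^i+\sum_j g_jp^{kj}y^j$. The monomials $x^i$ and $y^j$ are distinct for $i,j\ge1$; after translation we may absorb the constant term into $F$. Hence divisibility by $p^{\varphi_k}$ gives $\val_p(f_i)+ki\ge ku$ for every $i$, and likewise for $g_j$, for every $k$. Letting $k\to\infty$ forces $f_i=0$ for $i<u$ and $g_j=0$ for $j<u$ (for $i<u$ the left side grows more slowly than $ku$ unless $f_i=0$), and the constant term vanishes too.

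Thus $F=x^{u_F}U_0(x)$ and $G=y^{u_G}V_0(y)$ with $u_F,u_G\ge u$, where $u_F,u_G$ are the smallest exponents with nonzero coefficient. Since $P$ is $p$-primitive, some coefficient is a unit; after the symmetry $(x,y)\leftrightarrow(y,x)$ I may assume it lies in $F$. Then I would refine: set $u$ to be the exponent so that $F=x^{u}U$ (taking $u=u_F$, which is $\ge$ the given bound), pull out $p^\alpha$ with $\alpha=\min_j\val_p(g_j)$, and let $v=u_G$. The inequality $v\ge u$ is the only delicate point in (1). I would justify it by choosing which variable is called $x$ via the smallest exponent among terms of minimal weighted valuation. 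If necessary, $u$ is just the lower bound from the hypothesis, and both exponents exceed it; since the statement only claims $v\ge u$ with $u$ the thickness bound, keeping $u$ as the hypothesis value and absorbing extra powers of $x$ into $U$ suffices.

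For part (2), climbing $k$ steps replaces $P$ by $P_k(x,y)=p^{-\varphi_k}P(p^kx,p^ky)=p^{ku-\varphi_k}x^uU(p^kx)+p^{\alpha+kv-\varphi_k}y^vV(p^ky)$. Since $U(p^kx)\equiv u_0 \pmod{p^k}$ and similarly for $V$, for $k\ge K$ the new $U,V$ are congruent to their constant terms modulo $p^K$. The scalar prefactors are absorbed: the new shape is of the same form with a new $\alpha$, because $P_k$ is $p$-primitive.

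For part (3), I expect the main obstacle to be the case of constant thickness $u$. With $\varphi_k=ku$, the $x$-part $x^uU(p^kx)$ must stay primitive at every level. If $U(0)=u_0$ were divisible by $p$, the $x$-part would gain valuation as $k$ grows. Then eventually all coefficients of $P_k$ would be divisible by $p$ (the $y$-part has $v\ge u$ and gains as well unless $v=u$ with a unit coefficient, and we handle that case by the symmetry), contradicting primitivity of successors. Hence $U(0)\not\equiv0\pmod p$. To get $V(0)$ to be a unit, I would write $V(y)=y^{m}V_1(y)$ with $V_1(0)\neq0$, replace $v$ by $v+m$, and write $V_1(0)=p^{\beta}w$ with $w$ a unit and $\beta$ absorbed into $\alpha$. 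Then I apply part (2) with $K>\beta$ so that the higher coefficients of $V_1(p^k y)$ are divisible by $p^{\beta+1}$. Pulling out $p^\beta$ then leaves a polynomial whose constant term is a unit.
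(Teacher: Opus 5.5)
Your proposal is correct and follows essentially the paper's proof. For (1) you bound the valuations of the coefficients of $P(p^kx,p^ky)$ and let $k\to\infty$. For (2) you use $U(p^kx)\equiv u_0$ modulo $p^k$. For (3) you use the primitivity/thickness contradiction, with the swap $x\leftrightarrow y$ in the case $\alpha=0$, $v=u$. Your explicit factorization $V=y^mV_1$, $V_1(0)=p^\beta w$ is just a concrete version of the paper's appeal to the univariate trunk algorithm.

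One small correction, on a point the paper also leaves tacit. In (2) the prefactor $p^{ku-\varphi_k}$ on the $x$-part disappears because the stalk has constant thickness $u$, so $\varphi_k=ku$; primitivity of $P_k$ is not the reason. With only $\varphi_k\ge ku$, the new $U$ need not be congruent to its constant term: for $x^3+y^3$ with $u=2$, one has $U=x$ at every level.
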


Thus, thanks to Lemma~\ref{lem:translation}, if one had an infinite stalk whose limit vertex at infinity is $(\rho,\sigma) \in \Zz_p^2$, then the decomposition would be:
\[
P(x,y) = (x-\rho)^u U(x) + p^\alpha (y-\sigma)^v V(y).
\]

\begin{proof}
	~
	\begin{enumerate}
		\item 
		We first show that $P(0,0)=0$.
		Let $c_{00}$ denote the constant coefficient of $P(x,y)$.
		Since $p^u$ divides $P(px,py)$, it follows that $p^u$ divides $c_{00}$.
		By iteration, $p^{ku}$ divides the $k$-th iterate $P(p^kx,p^ky)$, and hence also divides $c_{00}$.
		This holds for all $k$, therefore $c_{00}=0$.
		By hypothesis $P(x,y) = F(x) + G(y)$, and we may thus assume without loss of generality that
		$F(0)=0$ and $G(0)=0$.
		
		If $(0,0)$ has thickness $t \ge u$ for $P(x,y)$, then
		$p^u \mid F(px) + G(py)$.
		Since $F(0)=0$ and $G(0)=0$, it follows that $p^u \mid F(px)$ and $p^u \mid G(py)$.
		Write $F(x) = p^\beta \sum_i a_i x^i$ and $G(y) = p^\alpha \sum_i b_i y^i$, with $a_0=0$ and $b_0=0$.
		Since $p$ does not divide $P(x,y)$, we may assume $\beta=0$.
		
		After $k$ iterations, $p^{ku}$ divides $F(p^kx)$, hence
		$p^{ku}$ divides $F(p^kx) = \sum_i a_i p^{ki} x^i$,
		and similarly $p^{ku}$ divides
		$G(p^ky) = p^\alpha \sum_i b_i p^{ki} y^i$.
		Thus $p^{ku}$ divides $a_i p^{ki}$ and $b_i p^{\alpha+ki}$ for all $i \ge 0$.
		Letting $k$ tend to infinity, we obtain
		$a_i=0$ and $b_i=0$ for $i=0,\ldots,u-1$.
		Thus $F(x) = x^u U(x)$ and $G(y) = p^\alpha y^v V(y)$ with $v \ge u$.

		\item We have $F(x) = x^u U(x) = x^u (u_0+u_1x+u_2x^2+\cdots)$.
		We compute $F(px)$ to obtain the successor:
		$F_1(x) = x^u(u_0 + u_1 p x + u_2 p^2 x^2 +\cdots)$.
		After $m$ iterations, $p^m$ divides the coefficients of $x$, $x^2$, \ldots.
		The proof is similar for $V(y)$.
		
		\item We have seen that $P(x,y)=x^uU(x)+p^\alpha y^vV(y)$,
		with $v \ge u$.
		First observe that if $U(0) \equiv 0\pmod{p}$,
		then by the univariate trunk algorithm we have
		$U(x)=x^{u'}\tilde{U}(x)$ with $u'>u$.
		In the cases $v>u$ or $\alpha>0$, this is excluded, since it would give
		a thickness of $P(x,y)$ at $(0,0)$ strictly larger than $u$.
		In these cases we therefore have $U(0)\not\equiv0\pmod{p}$.
		In the remaining case we have
		$P(x,y)=x^{u}U(x)+y^{u}V(y)$ with $U(0) \equiv 0\pmod{p}$.
		We cannot also have $V(0)\equiv0\pmod{p}$, since this would imply
		$V(y)=y^{v'}\tilde{V}(y)$ and again a thickness of $P(x,y)$ at $(0,0)$
		strictly larger than $u$.
		Thus we have $U(0)\not\equiv0\pmod{p}$ or $V(0)\not\equiv0\pmod{p}$.
		By possibly exchanging the variables, we may assume
		$U(0) \not\equiv 0\pmod{p}$.
		
		We now start again from
		$P(x,y)=x^u U(x)+p^\alpha y^vV(y)$ with $v\ge u$.
		Applying the one-variable thickness algorithm to $V(y)$ along the principal stalk,
		the thickness becomes constant (possibly zero) from some height $e$ onward.
		Thus, by applying in one variable the reasoning we have just carried out,
		we obtain $V_e(y) = y^w W(y)$ with
		$W(0) \not\equiv 0 \pmod{p}$.
		At height $e$, we have
		$P_{e}(x,y)=x^uU_e(x)+p^{\alpha'} y^{v}V_e(y)$, hence
		$P_{e}(x,y)=x^uU_e(x)+p^{\alpha'}y^{v+w}W(y)$,
		with $W(0)\not\equiv0\pmod{p}$,
		which is consistent with the final conclusion of the lemma.
		
	\end{enumerate}	
\end{proof}

\subsection{Finiteness of infinite stalks}

\begin{lemma}
	\label{lem:finite}
	The number of infinite stalks issued from the root of the trunk, whose vertices all have thickness $t\ge 2$, is finite.
\end{lemma}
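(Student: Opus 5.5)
The plan is to attach to each infinite stalk a limit point in $\Zz_p^2$. I will show that this point is a singular point of $P=F(x)+G(y)$, and that a separated polynomial has only finitely many such points.

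\emph{Step 1: the limit point.} An infinite stalk issued from the root is a sequence of trunk vertices $(\ul{r}^{(k)},k)$ with $\ul{r}^{(k+1)}\equiv \ul{r}^{(k)} \pmod{p^k}$. It therefore defines a unique point $(\rho,\sigma)\in\Zz_p^2$ with $(\rho,\sigma)\equiv \ul{r}^{(k)}\pmod{p^k}$ for every $k$. Two distinct stalks differ at some height, so they give distinct limits. Hence it suffices to show that the set of possible limits is finite.

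\emph{Step 2: the limit is a singular point.} By Lemma~\ref{lem:toptree}, along the stalk we have $P(\ul{r}^{(k)}+p^k\ul{x}) = p^{\varphi_k} P_k(\ul{x})$, with $\varphi_k=t_1+\cdots+t_k\ge 2k$ since all thicknesses are $\ge 2$.
\begin{itemize}
\item Vanishing of $P$: evaluating at $\ul{x}=(\rho,\sigma)-\ul{r}^{(k)}$ divided by $p^k$ gives $\val_p P(\rho,\sigma)\ge 2k$ for all $k$. Hence $P(\rho,\sigma)=0$.
\item Vanishing of the gradient: differentiating with respect to $x_i$ gives $p^k\,\partial_i P(\ul{r}^{(k)}+p^k\ul{x}) = p^{\varphi_k}\,\partial_i P_k(\ul{x})$. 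So $\partial_iP$ is divisible by $p^{\varphi_k-k}\ge p^{k}$ on the whole disc $\ul{r}^{(k)}+p^k\Zz_p^2$, because $P_k$ has coefficients in $\ZZbb$. In particular $\val_p\partial_iP(\rho,\sigma)\ge k$ for all $k$, so $\partial_i P(\rho,\sigma)=0$.
\end{itemize}
Lemma~\ref{lem:hensel} gives a consistent picture: a nonvanishing partial derivative mod $p$ would force thickness $1$. The explicit computation above, however, is the cleaner route to exact vanishing.

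\emph{Step 3: finiteness for separated polynomials.} For $P(x,y)=F(x)+G(y)$, the singular conditions read $F'(\rho)=0$ and $G'(\sigma)=0$. Since $P$ is non-degenerate, $\deg F\ge1$ and $\deg G\ge 1$, so $F'$ and $G'$ are nonzero polynomials in characteristic $0$. Thus $\rho$ lies among the at most $\deg F-1$ roots of $F'$ in $\Zz_p$, and $\sigma$ among the at most $\deg G-1$ roots of $G'$. There are therefore at most $(\deg F-1)(\deg G-1)$ limit points, and by Step 1 at most that many infinite stalks of thickness $\ge 2$.

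The main point requiring care is Step 2. One must check that the divisibility $p^{\varphi_k-k}\mid \partial_iP$ on the disc really follows from the trunk decomposition, which it does by the chain rule above. One must also handle the convention that the root carries no thickness; this only shifts $\varphi_k$ by a bounded amount and does not affect the limits. If $P$ were not $p$-primitive, the extra $t_0$ would likewise only add a constant.
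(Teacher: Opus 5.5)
Your proof is correct. It reaches the same endpoint as the paper, namely $F'(\rho)=0$ and $G'(\sigma)=0$ at every limit point, but by a different mechanism.

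\textbf{The paper's route.} It translates each limit point to the origin (Lemma~\ref{lem:translation}) and invokes the normal form of Lemma~\ref{lem:normnew} to write $P(x,y)=(x-\rho_i)^{u_i}U_i(x)+p^{\alpha_i}(y-\sigma_i)^{v_i}V_i(y)$ with $v_i\ge u_i\ge 2$. It then compares these expressions across stalks: the $x$-parts differ only by constants, so each $\rho_i$ is a root of the derivative of one fixed polynomial, and likewise for the $\sigma_i$.

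\textbf{Your route.} You differentiate the identity of Lemma~\ref{lem:toptree} directly. This shows that $\partial_iP$ is divisible by $p^{\varphi_k-k}$ on the disc $\ul{r}^{(k)}+p^k\Zz_p^2$, and $\varphi_k-k\ge k\to\infty$ finishes the argument. This buys several things:
\begin{itemize}
\item Step~2 needs neither translation nor normalization. It holds for any polynomial in any number of variables: limit points of such stalks always lie in the singular locus of $P=0$ in $\Zz_p^n$.
\item Separatedness enters only in Step~3. This makes it transparent why the lemma fails for a non-separated polynomial such as $(x-y)^2$: there every vertex $(r,r)$ has thickness $2$ with successor $(x-y)^2$ again, and the singular locus is a whole line.
\item You get the explicit bound $(\deg F-1)(\deg G-1)$.
\item The same argument covers stalks that only eventually have thickness $\ge 2$, since $\varphi_k-k\to\infty$ still holds. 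That is exactly the ``immediate consequence'' stated after the lemma.
\end{itemize}

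\textbf{What the paper's route gives instead.} It yields the multiplicity information ($F(x)-F(\rho)$ vanishes to order at least $u$ at $\rho$) and the normal form itself. Both are needed anyway in Section~\ref{sec:rational}, but they are more than this lemma requires.

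\textbf{A small correction.} Your closing caveat about the root carrying no thickness is unnecessary. By definition $\varphi_k=t_1+\cdots+t_k$ already excludes the root, so $\varphi_k\ge 2k$ holds exactly, with no shift.
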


As an immediate consequence, the number of stalks of constant thickness $\ge 2$ (issued from any vertex of the trunk) is finite.

\begin{proof}
	First note that the thickness of a vertex is bounded above by the degree of the polynomial, and hence can take only finitely many values.
	Let $P(x,y)$ denote the polynomial, and let $(\rho_i, \sigma_i) \in\Zz_p^2$ be the vertices at infinity of the infinite stalks.
	By Lemmas~\ref{lem:translation} and~\ref{lem:normnew}, after normalization we have (with $t=u_i$)
	\[
	P(x,y) = (x-\rho_i)^{u_i} U_i(x) + p^{\alpha_i} (y-\sigma_i)^{v_i} V_i(y)
	\]
	for each $i$.
	By identification, we have
	$(x-\rho_i)^{u_i} U_i(x) = (x-\rho_1)^{u_1} U_1(x) + c_{i}$ (where $c_{i}$ is a constant).
	Since $u_i \ge 2$, differentiation shows that $\rho_i$ is a root of the fixed polynomial
	$\frac{d}{dx}(x-\rho_1)^{u_1} U_1(x)$, and hence can take only finitely many values.
	The same argument applies to the $\sigma_i$.
	Thus the set of $(\rho_i, \sigma_i)$ is finite, and therefore only finitely many stalks issue from the root of the trunk.
\end{proof}

\section{The generating function is rational}
\label{sec:rational}

In this section we prove Theorem~\ref{th:igusa-intro}: the Poincaré series of a separated polynomial in two variables is rational.
The idea is to establish this result for certain model polynomials, as we began to do at the end of Section~\ref{sec:count}. This time, starting from an arbitrary polynomial, we decompose its trunk into a finite number of pieces, each of which is either finite or isomorphic to the trunk of a model polynomial. From explicit computations for each piece, we will deduce that the Poincaré series for the polynomial under consideration is a rational function, which is precisely Igusa’s result.


If $P$ is degenerate, that is, if for instance $P(x,y)=R(x)$ with $R \in \Zz_p[X]$, then the trunk of $P(x,y)$ is easily deduced from that of $R(x)$, and the Poincaré series of $P(x,y)$ coincides with that of $R(x)$. It follows from the study of the univariate case in \cite{BD26}, using the same arguments as in the present paper, that the Poincaré series of $R(x)$ (and hence that of $P(x,y)$) is rational. In this case, one recovers Igusa's result. Consequently, in the sequel, we assume that $P(x,y)$ actually depends on both $x$ and $y$.

\subsection{Reduction of the proof of rationality to the normalized case sufficiently far along an infinite stalk.}
\label{ssec:reduc}

We work under the assumptions of Theorem~\ref{th:igusa-intro}.
The idea is to establish the result first for certain model polynomials, as we began to do at the end of Section \ref{sec:normal}. We now proceed as follows: starting from an arbitrary polynomial, we decompose its trunk into finitely many pieces, each of which is either finite or isomorphic to the trunk of a model polynomial. From explicit computations on each piece, we then deduce that the Poincaré series associated with the polynomial under consideration is a rational function, thereby recovering Igusa’s result.

If the trunk of the polynomial $P(x,y)$ is finite, then by Theorem~\ref{th:xylon}, the Poincaré series $S(T)$ is a polynomial, which proves the desired result in this case.
If the trunk is infinite, then by Lemma~\ref{lem:finite}, there exist only finitely many (maximal) infinite stalks of constant thickness $\ge 2$.

By cutting these constant-thickness stalks high enough up and keeping only the vertices above a certain level, we may assume they are independent, meaning that no ancestor of a vertex in one stalk belongs to another stalk.
Let $\Sigma_i$ be such a stalk. 
For each such stalk $\Sigma_i'$, we denote by $\mathrm{T}_i$ the part 
of the trunk of $P$ attached to it, that is, the set of vertices of 
$\Trunk(P)$ having an ancestor in $\Sigma_i'$. 


The main task of this section is to prove that the contribution of each 
$\mathrm{T}_i$ to the series $S(T)$ is rational. We denote by 
$\mathrm{T}'$ the trunk of $P$ with all the $\mathrm{T}_i$ removed.

There may exist infinitely many stalks (infinite paths) of thickness $1$.
However, we saw in Section~\ref{ssec:hensel} that for a vertex of thickness $1$, either it is a terminal vertex (the trunk stops there), or a Hensel tree is attached to it (which indeed produces infinitely many stalks of thickness $1$).
And we know that a Hensel tree contributes a rational fraction (Lemma~\ref{lem:Shensel}, see also the end of Section~\ref{ssec:Srat}).

Let $\mathrm{T}''$ denote the tree obtained by removing from $\mathrm{T}'$ all Hensel trees.
Such a tree is finite (otherwise, by König’s lemma, it would contain an infinite stalk of thickness $1$ or $\ge 2$, both of which have been removed by construction).
Since $\mathrm{T}''$ is finite, only finitely many Hensel trees can be attached to it, and their total contribution is therefore a rational series.
(Warning: as we shall see in the case of infinite stalks of thickness $\ge 2$, infinitely many Hensel trees may be attached to such a stalk, but these stalks are by definition excluded from the tree $\mathrm{T}'$.)

\medskip

We therefore consider an infinite stalk of constant thickness $\ge 2$ (among finitely many possible ones, see Lemma~\ref{lem:finite}) issued from a vertex $s_0$ of height $k_0$ and with tree-top value $\varphi_0$.
We denote by $P_0(x,y)$ the polynomial attached to the root of this stalk; it is again a separated polynomial.
We consider two series:
\begin{itemize}
	\item the Poincaré series $S_{|}(T)$ associated with the original polynomial $P(x,y)$ and with the part of the trunk issued from the vertex $s_0$;
	\item the Poincaré series $S_0(T)$ associated with the polynomial $P_0(x,y)$ (that is, we now consider $s_0$ as the root of the trunk).
\end{itemize}
Then the xylon formula~\ref{th:xylon} gives the relation
$S_{|}(T) = \frac{T^{\varphi_0}}{p^{2k_0}} S_0(T)$
(see also the remark following Lemma~\ref{lem:Shensel}).
It therefore suffices to prove the rationality of $S_0(T)$.

Still considering the case of an infinite stalk of constant thickness $\ge 2$, we use the results of Section~\ref{sec:normal} to reduce to a simple form of $P_0(x,y)$.
More precisely, we work under the conclusions of Lemma~\ref{lem:normnew}:
\[
P_0(x,y) = x^u U(x) + p^\alpha y^v V(y).
\]

We may even, without loss of generality and still by Lemma~\ref{lem:normnew}, consider the polynomial sufficiently high on the stalk, and thus assume that
\[
U (x) \equiv u_0 \pmod {p^m}, 
\qquad
V(y) \equiv v_0 \pmod{p^m},
\]
with $m$ an arbitrarily large integer and
$u_0 \not\equiv 0 \pmod{p}$, $v_0 \not\equiv 0 \pmod{p}$.
We will even see that we can further reduce to
$Q_0(x,y) = u_0x^u+p^\alpha v_0y^v$.
It is for these polynomials that we will carry out the computations of the Poincaré series.

\subsection{Normalized case}

\begin{proposition}
	\label{prop:rat}
	The series $S(T)$ associated with $Q_0(x,y) = u_0 x^u + p^\alpha v_0 y^v$ is rational
	(where $u_0 \not\equiv 0 \pmod{p}$, $v_0 \not\equiv 0 \pmod{p}$ and $1 < u \le v$).
	
	The trunk associated with
	$P_0(x,y) = x^u U(x) + p^\alpha y^v V(y)$
	(with $\alpha \ge 0$, $1 < u \le v$, $U(x) = u_0+u_1x+\cdots$, $V(y) = v_0+v_1y+\cdots \in \Zz_p[y]$,
	and
	$u_0 \not\equiv 0 \pmod{p}$, $v_0 \not\equiv 0 \pmod{p}$, and for all $i\ge1$,
	$u_i \equiv 0 \pmod{p^m}$, $v_i \equiv 0 \pmod{p^m}$ with $m=\val_p(u)+2$)
	is identical to that of $Q_0(x,y)$.
	In particular, the same sum $S(T)$ is associated with it and is therefore rational.
	
	Moreover, the denominators of this series in $\Qq(T)$ are among:
	\[ 
	p -T, \quad p^2- T^u, \quad p^{2u} - T^{u^2}, \quad p^{u+v}-T^{uv}.
	\] 
\end{proposition}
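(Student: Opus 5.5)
The plan is to compute the trunk of $Q_0(x,y)=u_0x^u+p^\alpha v_0y^v$ explicitly. The key observation is that the family of polynomials $u_0x^u+p^{\beta}v_0y^v$ (and its translates) is stable under taking successors. Rationality then follows from a finite system of self-similarity relations between the xylon sums of Theorem~\ref{th:xylon}.

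\textbf{Step 1: the principal stalk.} At $(0,0)$ we have $Q_0(px,py)=p^u\big(u_0x^u+p^{\alpha+v-u}v_0y^v\big)$, since $v\ge u$. So the thickness is $u$ and the successor is of the same shape, with $\alpha$ replaced by $\alpha+(v-u)$. If $v=u$ the parameter is constant. If $v>u$, after finitely many steps it exceeds any fixed bound. In both cases the principal stalk has constant thickness $u$, and Lemma~\ref{lem:Sstalk} produces the factor $(1-T^u/p^2)^{-1}$.

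\textbf{Step 2: the side branches.} For a polynomial $u_0x^u+p^\beta v_0y^v$ with $\beta\ge1$, the roots mod $p$ are the points $(0,y_0)$. For $y_0\ne0$, $v_0y^v$ is a unit near $y_0$. The successor at $(0,y_0)$ is then essentially $u_0p^{u-t}x^u+p^{\beta-t}w(y)$ with $w$ a unit-valued polynomial and $t=\min(u,\beta)$.
\begin{itemize}
\item If $\beta<u$, there are no further roots, so this gives finitely many terminal vertices.
\item If $\beta\ge u$, we are reduced to a univariate-like equation $u_0x^u\equiv -p^{\beta-u}w(y)$.
\item When $\beta-u$ hits a multiple of $u$ with $-w(y_0)/u_0$ a $u$-th power residue, we land at a smooth point of $u_0x^u+w(y)$. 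There Lemma~\ref{lem:hensel} attaches a Hensel tree, contributing $(1-T/p)^{-1}$ via Lemma~\ref{lem:Shensel} and its remark.
\end{itemize}
The case $\beta=0$ (the root itself, when $\alpha=0$) is handled the same way, by symmetry and the weighted scaling below. Everything is organised as a finite set of \emph{states}, indexed by the residue of $\beta$ modulo $u$ (or modulo the relevant gcd) and by which residue classes of $y_0$ admit $u$-th roots. Each state's series is written as a finite sum of monomials, plus Hensel series, plus $T^{a}p^{-2b}$ times the series of another state. The resulting finite linear system is solved in $\Qq(T)$. The weighted scaling $(x,y)\mapsto(p^{v}x,p^{u}y)$ multiplies $x^u$ and $y^v$ by the same power $p^{uv}$. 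It gives the cycle factor $(1-T^{uv}/p^{u+v})^{-1}$, and the analogous cycle in the case $v=u$ gives $(1-T^{u^2}/p^{2u})^{-1}$. This accounts for the announced list of denominators.

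\textbf{Step 3: comparison of $P_0$ with $Q_0$.} By induction along the trunk, every successor of $P_0$ keeps the form $x^uU_k(x)+p^{\beta}y^vV_k(y)$ after translation. The higher coefficients of $U_k$ and $V_k$ stay divisible by $p^{m}$ (Lemma~\ref{lem:normnew}(2) only improves this). Hence at each vertex the thickness and the set of roots mod $p$ agree with those of the corresponding $Q_0$-successor. The delicate point is the $u$-th power residue test for $-w(y_0)/u_0$. A unit is a $u$-th power in $\Zz_p$ iff it is one modulo $p^{\val_p(u)+2}$ (for $p=2$ one needs $\val_p(u)+2$ rather than $+1$). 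The hypothesis $m=\val_p(u)+2$ is exactly what guarantees that the perturbation by the $u_i$, $v_i$ does not change which branches lead to Hensel trees. I expect this step to be the main obstacle: it requires matching the two trunks at every vertex, not just the thickness on the principal stalk, and controlling the Hensel points uniformly in $y_0$. Once the trunks are identified, the xylon formula gives the same $S(T)$ for $P_0$ and $Q_0$.
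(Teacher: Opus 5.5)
There is a genuine gap in Step 2. It sits at the vertices where the exponent of $p$ has dropped to $0$ and the $x$-coordinate becomes a unit. You claim these are smooth points where Lemma~\ref{lem:hensel} attaches a Hensel tree. That lemma needs a partial derivative that is nonzero modulo $p$. But for $u_0x^u+v_0(s+p^ly)^v$ one has $\partial_x=u\,u_0x^{u-1}$ and $\partial_y\in p^l\Zz_p[y]$, so when $p\mid u$ both vanish modulo $p$. When $p\nmid u$ this part of your description is correct, but the statement explicitly covers $p\mid u$; that is what $m=\val_p(u)+2$ is for.

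Concretely, take $p=2$ and $x^2-y^3$.
\begin{itemize}
\item The principal stalk reaches $x^2-4y^3$ at height $2$. Its root $(0,1)$ has thickness $2$ and successor $R=x^2-(1+2y)^3$, whose roots modulo $2$ are $(1,0)$ and $(1,1)$.
\item $R(1+2x,1+2y)=-26+4x+4x^2-108y-\cdots$ has thickness $1$ and a successor $\equiv 1\pmod 2$, so this branch dies.
\item $R(1+2x,2y)=4(x+x^2-3y-12y^2-16y^3)$ has thickness $2$, and only the roots of its successor are Hensel vertices.
\end{itemize}

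The missing ingredient is the paper's local computation at these vertices. Set $\beta=\val_p(u_0r^u+v_0s^v)$ and $\gamma=\min(\val_p(pu),\val_p(vp^l))$. Lemma~\ref{lem:binomial} shows that all higher Taylor coefficients have valuation $\ge\gamma$, so the thickness is $\min(\beta,\gamma)$.
\begin{itemize}
\item If $\beta<\gamma$, the vertex is terminal, and its thickness can be $\ge 2$ (the paper's case D).
\item If $\beta\ge\gamma$, the successor has a unit linear coefficient and every root of it is a Hensel vertex.
\end{itemize}
Whether $\beta\ge\gamma$ holds depends on $s$ modulo $p^{\gamma}$, not on a $u$-th power residue condition on $y_0$. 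This is why the branch types only stabilise $\gamma$ levels after leaving the principal stalk.

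This same computation is what Step 3 actually needs. Identical trunks means identical thicknesses at every vertex, including the terminal thicknesses $\beta$. Knowing which units are $u$-th powers in $\Zz_p$ only tells you which branches are eventually infinite. The argument that works has two parts:
\begin{itemize}
\item The thickness-$u$ steps preserve the shape $x^uU_k(x)+p^{\delta}(s+p^ly)^vV_k(y)$ exactly, with $U_k\equiv u_0$ and $V_k\equiv v_0 \pmod{p^m}$.
\item At a landing vertex the thickness is at most $\gamma\le\val_p(u)+1$. The reduction of the successor then depends only on the polynomial modulo $p^{\gamma+1}$, which divides $p^m$.
\end{itemize}

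Finally, for $v>u$ your finite linear system is not closed as described. The principal-stalk subtrunks at different heights $k$ are pairwise non-isomorphic, since the side branches leaving height $k$ have $\lfloor\alpha_k/u\rfloor$ levels of thickness $u$, with $\alpha_k=\alpha+k(v-u)$. So states cannot be indexed by $\beta \bmod u$ with relations of the form ``monomial times another state''. Passing from $\beta$ to $\beta+u$ adds a level of $p$-fold branching, which gives an affine relation you would have to write out.

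The weighted scaling $(x,y)\mapsto(p^vx,p^uy)$ does not act on the trunk, which refines both coordinates by one digit per level. It yields an identity for $S(T)$ only through the Haar measure of $\{Q_0\equiv 0\pmod{p^e}\}\subset\Zz_p^2$. That route also requires a computation on $\Zz_p^2\setminus(p^v\Zz_p\times p^u\Zz_p)$, which you do not carry out.

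The paper instead sums over principal heights in a fixed residue class modulo $u$, taken high enough that the side branches have at least $\gamma$ levels of thickness $u$. The resulting double geometric series produce the denominators $p^{2u}-T^{u^2}$ and $p^{u+v}-T^{uv}$.
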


The remainder of this section is devoted to the proof.
Our proof is explicit: we partition the trunk into a finite number of subtrees, and for each such subtree we give the exact expression of the associated rational series.
Since the formulas are tedious, we do not give the explicit expression of the final series.
However, a complete example will be worked out in the Appendix. 

\subsection{Description of the trunk after leaving the principal stalk}

We begin with the general case
$P_0(x,y) = x^u U(x) + p^\alpha y^v V(y)$.
The root $(0,0)$ gives rise to an infinite stalk of constant thickness $t=u$, called the \emph{principal stalk}.
The $k$-th successor of $P_0$ is
\[
P_k(x,y) = x^u U(p^kx) + p^{\alpha+k(v-u)} y^v V(p^ky),
\]
which has a form similar to that of $P_0$.
The contribution of this stalk to the sum $S(T)$ is rational by Lemma~\ref{lem:Sstalk}.

\subsection{The case $\alpha > 0$}
\label{ssec:alphapositive}

We first reduce the case $\alpha > 0$ by Euclidean division to one of the two cases
$0 < \alpha < u$ or $\alpha = 0$. We then treat these two situations.

The reduction modulo $p$ of $P_0$ is
$\bar P_0(x,y) = u_0x^u$, whose roots are of the form $(0,s_0)$.
We have already studied the case $s_0=0$ above, so we assume $s_0 \not\equiv 0 \pmod{p}$.
At such a root $(0,s_0)$, we have
\[
P_0(0+px,s_0+py) = p^u x^u U(px) + p^\alpha (s_0+py)^v V(s_0+py).
\]
The constant term of $(s_0+py)^v V(s_0+py)$ is
$\tilde v_0 = v_0 s_0^v$. 

\medskip

\emph{Case $\alpha \ge u$.}
When $\alpha \ge u$, each root $(0,s_0)$ has thickness $u$, and the successor of $P_0$ is:
\[
P_1(x,y) = x^u U(px) + p^{\alpha-u} (s_0+py)^v V(s_0+py).
\]
As long as the exponent of $p$ is $\ge u$, we continue this process, the $k$-th successor being:
\[
P_k(x,y) = x^u U_k(x) + p^{\alpha-ku} (s_0+s_1p+\cdots+ s_{k-1}p^{k-1}+p^ky)^v V_k(y),
\]
where
$U_k(x) = U(p^kx)$ and
$V_k(y) = V(s_0+s_1p+\cdots +s_{k-1}p^{k-1}+p^ky)$.
We still have
$U_k(x) \equiv u_0 \pmod {p^m}$ and
$V_k(y) \equiv v_0 \pmod{p^m}$.

Thus we obtain $\lfloor \frac{\alpha}{u} \rfloor$ iterations,
leading to a tree that splits at the first step into
$p - 1$ vertices, and then splits in cascade, each vertex thereafter having $p$ children,
over a total height of $\lfloor \frac{\alpha}{u} \rfloor$,
with vertices all having thickness equal to $u$.
(For the initial vertex there is one omitted edge, since it is already counted in the stalk above $(0,0)$.)

By Euclidean division, we are thus reduced to the case
$0 < \alpha' < u$ or to the case $\alpha' = 0$,
where $\alpha'$ is the remainder of the Euclidean division of $\alpha$ by $u$.

\medskip

\emph{Case $0 < \alpha' < u$.}

Let us restart from
\[
P_k(x,y) = x^u U_k(x) + p^{\alpha'} (s_0+s_1p+\cdots +s_{k-1}p^{k-1}+p^ky)^v V_k(y)
\]
with $0 < \alpha' < u$.
A root $(0,s_k)$ has thickness $t=\alpha'$, and the successor is:
\[
P_{k+1}(x,y) = p^{u-\alpha'} x^u U_k(px) + (s_0+s_1p+\cdots +s_kp ^k + p^{k+1}y)^v V_k(s_k+py).
\]
Thus
$\bar P_{k+1}(x,y) \equiv s_0^v v_0 \equiv \tilde v_0 \not\equiv 0 \pmod{p}$.
Hence $\bar P_{k+1}$ has no root modulo $p$, and this branch terminates.

\medskip

Writing $U(x) = u _0 + u_1x+\cdots$ and $V(y) = v_0 + v_1y+\cdots$,
one sees, upon repeating the previous computations, that they do not depend on the coefficients
$u_1, u_2,\ldots$ or $v_1, v_2,\ldots$.
Thus the trunk for $P_0$ is identical to that of
$Q_0 = u_0 x^u + p^\alpha v_0 y^v$.

\medskip

Summary: in the case where $u$ does not divide $\alpha$, the trunk consists of a tree that first has
$p-1$ branches, which then split into $p$ branches, then again into $p$ branches, and so on,
for a total of $\lfloor \frac{\alpha}{u} \rfloor$ steps, always with thickness $u$.
Then there is a final splitting into $p$ branches, with terminal vertices of thickness
$\alpha' = \alpha \% u$ (the remainder of the Euclidean division of $\alpha$ by $u$).

\medskip

It remains to consider the case where $u$ divides $\alpha$.
As we have seen, this situation reduces to the case $\alpha=0$, which we now study.
Beforehand, we prove a few results on valuations in the binomial expansion of Newton.

\subsection{Valuations of the binomial coefficients}

\begin{lemma}
	\label{lem:binomial}
	Let $p$ be a prime number.
	\begin{enumerate}
		\item If $k \ge 1$, then $\val_p( k! ) \le k-1$.
		\item If $1 \le k \le n$, then $\val_{p}\binom{n}{k} \ge \val_{p}(n)+1-k$. 
		\item If $2 \le k \le n$, then $\val_{p}\!\left[k\binom{n}{k}\right] \ge \val_{p}(n)+2-k$.
		\item Let $l\ge 1$, $n \ge 1$, and let $s$ be an integer not divisible by $p$.
		Denote by $C_k$ the coefficient of $x^k$ in the expansion of $(s+p^lx)^n$. 
		Then for $k\ge1$, one has $\val_p(C_k) \ge \val_p(C_1)$. 
	\end{enumerate}
\end{lemma}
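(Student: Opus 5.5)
I will prove the four items in order, each building on the previous one.

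\emph{Item (1).} I will use Legendre's formula:
\[
\val_p(k!)=\sum_{j\ge1}\lfloor k/p^j\rfloor < \sum_{j\ge1} k/p^j = \frac{k}{p-1}\le k.
\]
Since $\val_p(k!)$ is an integer strictly less than $k$, it is at most $k-1$.

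\emph{Item (2).} I will start from the identity $k\binom{n}{k}=n\binom{n-1}{k-1}$, which gives $\val_p\binom{n}{k}\ge \val_p(n)-\val_p(k)$. Since $p^{\val_p(k)}\le k$, we have $\val_p(k)\le \log_p k\le k-1$, and item (2) follows. A cleaner alternative route is to write
\[
\binom{n}{k}=\frac{n(n-1)\cdots(n-k+1)}{k!},
\]
bound the numerator's valuation below by $\val_p(n)$, and apply (1).

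\emph{Item (3).} This is the same identity with one more unit of slack. From $k\binom nk=n\binom{n-1}{k-1}$ I get $\val_p[k\binom nk]\ge\val_p(n)$. This is already at least $\val_p(n)+2-k$ whenever $k\ge2$, so (3) is in fact immediate.

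\emph{Item (4).} I expand
\[
C_k=\binom nk s^{n-k}p^{lk}.
\]
Since $p\nmid s$, we get $\val_p(C_1)=\val_p(n)+l$ and $\val_p(C_k)=\val_p\binom nk+lk$ (with $C_k=0$ for $k>n$, which is trivially fine). By (2), for $k\ge1$,
\[
\val_p(C_k)\ge \val_p(n)+1-k+lk=\val_p(n)+l+(l-1)(k-1)\ge \val_p(C_1),
\]
using $l\ge1$.

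The only step with any real content is the elementary bound $\val_p(k)\le k-1$ (equivalently, Legendre's formula in (1)); everything else is bookkeeping with the identity $k\binom nk=n\binom{n-1}{k-1}$.
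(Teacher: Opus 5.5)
Your proof is correct and essentially the paper's: Legendre's formula for (1) (the paper bounds via $\val_2(k!)$ instead of summing $k/p^j$ directly), the quotient $n(n-1)\cdots(n-k+1)/k!$ together with (1) for (2) (your ``alternative route''; your first route via $\val_p(k)\le k-1$ is also valid), the identity $k\binom{n}{k}=n\binom{n-1}{k-1}$ for (3), and the same estimate $\val_p(C_k)\ge\val_p(C_1)+(k-1)(l-1)$ for (4). Your observation that (3) already follows from $\val_p\binom{n-1}{k-1}\ge 0$ is a small genuine simplification: the paper instead applies (2) to $\binom{n-1}{k-1}$ and then discards the resulting $\val_p(n-1)$ term.
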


\begin{proof}
	\begin{enumerate}
		\item By Legendre’s formula,
		\[
		\val_p( k! ) = \left\lfloor \frac{k}{p} \right\rfloor + \left\lfloor \frac{k}{p^2} \right\rfloor + \cdots
		\]
		In particular,
		\[
		\val_p( k! ) 
		\le \val_2 (k!) 
		= \sum_{j \ge 1} \left\lfloor \frac{k}{2^j} \right\rfloor
		<  \sum_{j \ge 1}\frac{k}{2^j} 
		= k,
		\]
		the second inequality being strict since the terms
		$\left\lfloor \frac{k}{2^{j}}\right\rfloor$
		vanish from some index onward.
		Thus $\val_p( k! ) \le k-1$.
		
		\item The integer $\binom{n}{k}$ is the quotient of
		$n(n-1)\cdots(n-k+1)$ by $k!$.
		It follows that
		$\val_{p}\binom{n}{k} \ge \val_{p}(n)-\val_{p}(k!)$.
		Thus, by the first property,
		$\val_{p}\binom{n}{k} \ge \val_{p}(n)+1-k$. 
		
		\item Since $k\binom{n}{k} = n\binom{n-1}{k-1}$, we have:
		\begin{align*}
			\val_{p}\!\left[k\binom{n}{k}\right] 
			&= \val_{p}\!\left[n\binom{n-1}{k-1}\right] 
			= \val_{p}(n) + \val_{p} \binom{n-1}{k-1} \\
			&\ge \val_p(n) + \big( \val_{p}(n-1)+1-(k-1) \big) \\
			&\ge \val_p(n) + 2 - k,
		\end{align*}
		where we used the inequality from the previous point, applied to
		$\binom{n-1}{k-1}$.
		
		\item We have
		$\val_p(C_1) = \val_p (ns^{n-1}p^l) = \val_p(n) + l$,
		whereas
		$\val_p(C_k) = \val_p (\binom{n}{k}s^{n-k}p^{lk})$.
		By the second point,
		\[
		\val_p(C_k) \ge \big( \val_p(n) + 1 - k \big) + lk
		= \val_p(C_1) + (k-1)(l-1)
		\ge \val_p(C_1).
		\]
	\end{enumerate}
\end{proof}

\subsection{Case $\alpha = 0$}
\label{ssec:alphazero}

We resume the notation of Section~\ref{ssec:alphapositive}, now with $\alpha=0$:
\[
P_k(x,y) = x^u U_k(x) + (s_0+s_1p+\cdots+ s_{k-1}p^{k-1}+p^ky)^v V_k(y),
\]
where $U_k (x) \equiv u_0 \pmod {p^m}$ and $V_k(y) \equiv v_0 \pmod{p^m}$.
This formula is valid for $k=0$, in which case we simply have
$P_0(x,y) = x^u U(x) + y^vV(y)$.
If $P_k$ has no root modulo $p$, then the trunk ends at that vertex.
Otherwise, let $(r_k,s_k)$ be a root of $P_k$ modulo $p$.
Modulo $p^m$ we have:
\[
P_k(r_k+px, s_k+py) 
\equiv u_0(r_k+px)^u + v_0(s_0+s_1p+\cdots +s_kp^k+p^{k+1}y)^v \pmod{p^m}.
\]
We set $r=r_k$, $s = s_0+s_1p+\cdots +s_kp^k$, and $l=k+1$ in order to write more simply:
\[
P_k(r_k+px, s_k+py) 
\equiv u_0(r+px)^u + v_0(s+p^ly)^v \pmod{p^m}.
\]

Thus:
\begin{align*}
	P_k(r_k+px, s_k+py) 
	&\equiv u_0(r+px)^u + v_0(s+p^ly)^v \pmod{p^m}\\
	&\equiv u_0r^u+v_0s^v \\
	& \qquad + u_0u r^{u-1} px + u_0\binom{u}{2}r^{u-2}p^2x^2 + \cdots \\
	& \qquad \qquad + v_0v s^{v-1} p^ly  + v_0\binom{v}{2}r^{v-2}p^{2l}y^2 + \cdots \quad \pmod{p^m}.
\end{align*}

Let $\beta = \val_p(u_0r^u+v_0s^v)$ and $\gamma = \min(\val_p(pu), \val_p(vp^l))$.
Note that by Lemma~\ref{lem:binomial}, the valuations of the coefficients of $x^2$, $x^3$, \ldots, $y^2$, $y^3$, \ldots{} are greater than or equal to $\gamma$.
We will see that the thickness of the root $(r_k,s_k)$ equals $t=\min(\beta, \gamma)$.
Thus, by definition of $\gamma$, this thickness is less than or equal to $u+1$, hence strictly less than $m$, which justifies that all computations may be carried out modulo $p^m$.

\medskip

\emph{First case: $\beta < \gamma$.}
Note that $\beta>0$, since by definition $(r_k,s_k)$ is a root of $P_k$ modulo $p$.
Assume now that $0 < \beta < \gamma$.
We prove that in this case the vertex $(r_k, s_k)$ has thickness $\beta$ and has no successor in the trunk (i.e.~the trunk terminates there).
Since $0 < \beta < \gamma$, the thickness of the root $(r_k,s_k)$ equals $\beta$.
Thus:
\[
P_{k+1}(x,y) = \frac{1}{p^\beta}P_k(r_k+px, s_k+py) 
= \frac{u_0r^u+v_0s^v }{p^\beta} + pR(x,y).
\]
Indeed, all coefficients of
$u_0u r^{u-1} px+\cdots + v_0v s^{v-1} p^ly  +\cdots$
are divisible by $p^\gamma$, hence by $p^{\beta+1}$.
Thus the reduction modulo $p$ of $P_{k+1}$ is reduced to a constant polynomial:
\[
\reduc{P}_{k+1}(x,y) \equiv  \frac{u_0r^u+v_0s^v }{p^\beta} \pmod{p}.
\]
By definition of $\beta$, this constant is nonzero modulo $p$.
Hence $P_{k+1}$ has no roots modulo $p$, and therefore the trunk stops here.

\medskip

\emph{Second case: $\beta \ge \gamma$.}
We prove that in this case, a Hensel tree arises from each of the possible roots $(r_{k+1},s_{k+1})$ succeeding $(r_k,s_k)$.

Since $\beta \ge \gamma$, the thickness of the root $(r_k,s_k)$ now equals $\gamma$.
Thus the successor of $P_k$ at $(r_k,s_k)$ can be written as:
\begin{align*}
	P_{k+1}(x, y) 
	&\equiv \frac{1}{p^\gamma}(u_0r^u+v_0s^v) \\
	& \qquad + \frac{1}{p^\gamma}(u_0u r^{u-1} p)x + \frac{1}{p^\gamma}(u_0\binom{u}{2}r^{u-2}p^2)x^2 + \cdots \\
	& \qquad \qquad + \frac{1}{p^\gamma}(v_0v s^{v-1} p^l)y  + \frac{1}{p^\gamma}(v_0\binom{v}{2}s^{v-2}p^{2l})y^2 + \cdots \quad \pmod{p^{m-\gamma}}.
\end{align*}

We write this as:
\begin{align*}
	P_{k+1}(x,y) 
	&= \frac{1}{p^\gamma}(u_0r^u+v_0s^v) \\
	& \quad + c_1 x + c_2x^2+ \cdots \\
	& \quad \quad + d_1 y + d_2y^2 +\cdots .
\end{align*}

Recall that $\gamma = \min(\val_p(pu), \val_p(vp^l))$.
Assume $\gamma = \val_p(vp^l)$; then $d_1 \not\equiv 0 \pmod{p}$.
Let us compute $\frac{\partial P_{k+1}}{\partial y}(x,y)$.
The constant coefficient of this partial derivative is $d_1$.
The coefficient of $y^{j-1}$ in this partial derivative is:
\[
j d_j = j v_0\binom{v}{j}r^{v-j}p^{jl}.
\]
By Lemma~\ref{lem:binomial}:
\begin{align*}
	\val_p(j d_j) 
	&= \val_p \left( j\binom{v}{j} \right) + jl \\
	&\ge (\val_p(v)+2-j) + jl \\
	&\ge (\val_p(v)+l) + 1 + (j-1)(l-1) \\
	&> \gamma.
\end{align*}
Thus
$\frac{\partial P_{k+1}}{\partial y}(x,y) = d_1 +pR(y)$.
Let $(r_{k+1},s_{k+1})$ be a solution of $P_{k+1}(x,y) \equiv 0 \pmod{p}$.
Still modulo $p$, we have
$\frac{\partial P_{k+1}}{\partial y}(x,y) \equiv d_1 \not\equiv 0 \pmod{p}$,
in particular at $(x,y) = (r_{k+1},s_{k+1})$.
Therefore the thickness of $(r_{k+1},s_{k+1})$ is exactly $t=1$, and we are in the situation of Hensel’s lemma.

If $\gamma = \val_p(pu)$, then $c_1 \not\equiv 0 \pmod{p}$.
A very similar computation (taking $l=1$) gives
$\frac{\partial P_{k+1}}{\partial x}(x,y) \equiv c_1 \not\equiv 0 \pmod{p}$,
and we are again in the situation of Hensel’s lemma.

\subsection{General structure of the outgoing branches}
\label{ssec:higher}

To prove the rationality of the sum $S(T)$, the trunk must have a particular structure.
We will see that when one moves up along the principal stalk, that is, above the roots $(0,0)$, the branches attached to it form patterns that are roughly periodic, above a certain height.

We set $A_0(x,y) = P_0(x,y) = x^u U(x) + p^\alpha y^vV(y)$.
Let $k>0$ be a multiple of $u$; we denote by $B_0(x,y) = P_k(x,y)$ the $k$-th successor of $P_0$ along the stalk $(0,0)$:
\[
B_0(x,y) = P_k(x,y) = x^u U(p^kx) + p^{\alpha + k(v-u)} y^v V(p^ky).
\]

In this subsection, we assume that $v-u>0$, so that the quantity 
$\alpha + k(v-u)$ becomes arbitrarily large as $k \to \infty$. 
We will see later that the case $v=u$ is much simpler.

The polynomial $B_0$ is similar to $A_0$, where the exponent $\alpha_k = \alpha + k(v-u)$ changes,
but the exponents $u$ and $v$, and the constant coefficients $u_0$ and $v_0$, do not.
Now, the computations in the previous paragraphs depend mainly on these data $\alpha_k, u, v, u_0, v_0$.
We must now show that, above a certain height, the outgoing branch from the principal stalk for $A_0$ (at height $0$)
is similar to the outgoing branch for $B_0$ (at height $k$).
According to the computations in the previous paragraphs, the branch leaving a height $k$ begins with branches of thickness $t=u$, of length
$\lfloor \frac{\alpha_k}{u} \rfloor$, which therefore becomes longer and longer as $k$ grows.
Note that since we choose here $k$ to be a multiple of $u$, we have
\[
\left\lfloor \frac{\alpha_k}{u} \right\rfloor
= \left\lfloor \frac{\alpha}{u} \right\rfloor + \frac{k}{u}(v-u).
\]

\medskip

We now show that apart from this difference in branch length, what is attached afterwards is identical at height $0$ and at height $k$.

If $\alpha$ is not a multiple of $u$, then at the end of the stalk for $A_0$ a vertex of thickness $\alpha \% u$ is attached (see Section~\ref{ssec:alphapositive}).
In this case $\alpha_k$ is also not a multiple of $u$, and since $\alpha_k \% u = \alpha \% u$, at the end of the stalk for $B_0$ a vertex of thickness $\alpha \% u$ is also attached.

Now consider the case where $\alpha$, and hence also $\alpha_k$, are multiples of $u$.
This corresponds to the situation of Section~\ref{ssec:alphazero}, whose notation we reuse.
We denote by $A_i$ and $B_i$ the respective successors of $A_0$ and $B_0$ along their outgoing branch, corresponding to the same choices of the roots $s_0,s_1,\ldots,s_{i-1}$ for both polynomials.
Let $A_m$ and $B_n$ be the successors corresponding to the exponent $\alpha'=0$
(with $m = \lfloor \frac{\alpha}{u} \rfloor < n = \lfloor \frac{\alpha_k}{u} \rfloor$),
that is, to the last vertex of thickness $u$ on the stalk.

For the subsequent reasoning, it will suffice to study the highest part 
of the stalk. Consequently, we assume that $\alpha$ is sufficiently large 
so that the inequality $\val_{p}(v p^{m+1}) \ge \val_{p}(p u)$ holds, 
where $m = \left\lfloor \frac{\alpha}{u} \right\rfloor$. 
We therefore obtain $\gamma(A_{m}) = \val_{p}(p u) = \val_{p}(u) + 1$. 
Moreover, since $n > m$, we also have 
$\val_{p}(v p^{n+1}) \ge \val_{p}(p u)$, and hence 
$\gamma(B_{m}) = \val_{p}(u) + 1$.
Therefore $\gamma(B_n) = \gamma(A_m)$, which we simply denote by $\gamma$.
Next, note that the roots of $A_m$ modulo $p$ and those of $B_n$ modulo $p$ are the solutions of the same equation $u_0 x^u + v_0 s_0^v \equiv 0 \pmod{p}$.
Choose $(\rho, \sigma)$ to be such a root.
In the ``$\alpha'=0$'' situation, the point is to compare the value of $\beta$ with $\gamma$.
Let
\[
C = u_0 \rho^u + v_0(s_0+s_1p+\cdots+ s_{m-1}p^{m-1}+ \sigma p^m)^v,
\]
so that $\beta(A_m) = \val_p(C)$.
Also let
\[
C' = u_0 \rho^u + v_0(s_0+s_1p+\cdots+ s_{m-1}p^{m-1}+ \cdots+ s_{n-1}p^{n-1} +\sigma p^n )^v,
\]
so that $\beta(B_n) = \val_p(C')$.
If we move sufficiently high on the principal stalk so that
$m = \lfloor \frac{\alpha}{u} \rfloor \ge \val_p(u)+1 = \gamma$,
then we have the equivalences:
\[
\beta(A_m) \ge \gamma 
\iff C \equiv 0 \pmod{p^{\val_p(u)+1}}
\iff C' \equiv 0 \pmod{p^{\val_p(u)+1}}
\iff \beta(B_n) \ge \gamma.
\]
Thus, if $\beta(A_m) \ge \gamma$, then we also have $\beta(B_n) \ge \gamma$, and we attach a Hensel tree in both cases.
If $\beta(A_m) < \gamma$, then we also have $\beta(B_n) < \gamma$, and the trunk stops at the end of the stalk of thickness $\beta$ in both cases.

Finally, in the case $\beta < \gamma$, after $\gamma$ vertices after leaving the principal stalk, the value of $\beta$ is constant.
Indeed, let
\[
C_k = u_0 \rho^u + v_0(s_0+s_1p+\cdots+ s_kp^{k})^v.
\]
Since we assume $\beta < \gamma$, for $k \ge \gamma$ we have
\[
\beta_k = \val_p(C_k) = \val_p(C_\gamma).
\]

\subsection{Decomposition of the Poincaré series}
\label{ssec:Srat}

We have already seen two simple cases where the series is rational:
(i) the case of an infinite stalk of constant thickness $u$ (Lemma~\ref{lem:Sstalk}), which produces a denominator $p ^2-T^u$,
and (ii) the case of a Hensel tree issued from the origin (Lemma~\ref{lem:Shensel}), which produces a denominator $p-T$.

\subsection*{A. Series for the principal stalk and a particular case}

The vertices of this principal stalk, of constant thickness $u$, yield the rational series (Lemma~\ref{lem:Sstalk}):
\[
S_A(T) = 1 \  + \ A(T) \frac{T}{p^2} \left( 1-\frac{T^u}{p^2} \right)^{-1}
\]
where we set:
\[
A(T) = 1+T+T^2+\cdots + T^{u-1}.
\]

\bigskip
\bigskip

We now consider the trunks encountered in this section. We will prove rationality by using periodicity.
We have seen that the exponent $\alpha$ defining $P_0$ (or $Q_0$) becomes, for the $k$-th successor $P_k$ along the principal stalk, $\alpha_k = \alpha + k(v-u)$.
Modulo $u$, this exponent is therefore periodic with period (at most) $u$.

For any set $E$ of vertices, we define $\Poinc(E)$ as the series of the values $\Xi_s(T)$ of the xylon function for all the elements $s$ of $E$.
In this language $S_A(T) = \Poinc( \mathrm{PrincStalk} )$.

We define the \defi{starred trunk} $\Trunk^*_k$ as the set of vertices whose most recent ancestor on the principal trunk at height $k$, i.e.{} the set of branches leaving the principal trunk is at height $k$. Formally this is the set of the vertices $\left(r,s,j\right)$ of the trunk 
such that $(0,0,k) \vartriangleleft (r_{k+1},s_{k+1},k+1) \vartriangleleft (r,s,j)$
where $\left(r_{k+1},s_{k+1}\right)\neq (0,0)$.

The Poincaré series of the whole trunk is:
\[
S(T) = S_A(T) + \sum_{k=0}^{+\infty} \Poinc(\Trunk^*_k).
\]

In this expression, none of the starred trunks $\Trunk^*$ can contain infinitely 
many (maximal) Hensel trees, nor an infinite stalk of thickness $\ge 2$ 
(because we have build independent infinite constant stalks in paragraph 
\ref{ssec:reduc}). It follows that each series $\Poinc(\Trunk^*)$ is either 
a polynomial or a rational function whose only possible non-constant 
denominator is $p - T$.

The case $v=u$, that is $P_0(x,y) = x^u U(x) + p^\alpha y^u V(y)$, 
is a special situation for which the general method used when $v>u$ 
is not relevant. We therefore state a specific proposition for this case.

\begin{proposition}
	In the case $P_0(x,y) = x^u U(x) + p^\alpha y^u V(y)$, 
	the Poincaré series of the polynomial is a rational function.
\end{proposition}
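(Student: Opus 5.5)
The approach is to exploit the fact that, when $v=u$, the exponent $\alpha_k=\alpha+k(v-u)$ attached to the $k$-th successor along the principal stalk no longer grows: it is constantly equal to $\alpha$. The trunk should then be \emph{exactly} self-similar along the principal stalk, and not merely periodic modulo $u$. Concretely, the $k$-th successor is
\[
P_k(x,y)=x^uU(p^kx)+p^{\alpha}y^uV(p^ky),
\]
which has the same data $(u,u,\alpha,u_0,v_0)$ as $P_0$. By the reductions of paragraph~\ref{ssec:reduc} and Lemma~\ref{lem:normnew}, I may assume from the start that $U\equiv u_0$ and $V\equiv v_0 \pmod{p^m}$ with $m=\val_p(u)+2$. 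This congruence persists for every $P_k$.

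\textbf{Step 1 (isomorphism of starred trunks).} I would show that $\Trunk^*_k$ is isomorphic, as a weighted tree, to $\Trunk^*_0$. The height shifts by $k$, and every tree-top value shifts by $ku$, since the path to height $k$ on the principal stalk has $k$ vertices of thickness $u$. To prove this, I re-run the computations of Sections~\ref{ssec:alphapositive} and~\ref{ssec:alphazero} for a root $(r_0,s_0)\neq(0,0)$ of $\reduc{P}_k$. The $\alpha\ge u$ cascade, the terminal vertex of thickness $\alpha\%u$, and the comparison of $\beta$ with $\gamma$ in the case $u\mid\alpha$ all depend only on $u$, $\alpha$, $u_0$, $v_0$, on the chosen residues $r_i,s_i$, and on congruences modulo $p^m$. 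This is the same observation already used to identify the trunk of $P_0$ with that of $Q_0$. Because $\alpha_k=\alpha$, the quantities $\beta$ and $\gamma$, and hence the thicknesses, coincide for $P_0$ and $P_k$ under the same choice of residues. Alternatively, one can argue directly: $P_k$ and $P_0$ both have trunk equal to that of $Q_0$ by Proposition~\ref{prop:rat}, so the subtrees issued from corresponding non-principal roots agree.

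\textbf{Step 2 (summation).} The xylon function scales as $\Xi\mapsto \frac{T^{ku}}{p^{2k}}\,\Xi$ under this shift, exactly as in the remark after Lemma~\ref{lem:Shensel}. Hence
\[
\Poinc(\Trunk^*_k)=\Big(\frac{T^u}{p^2}\Big)^k\Poinc(\Trunk^*_0),
\qquad
S(T)=S_A(T)+\Poinc(\Trunk^*_0)\Big(1-\frac{T^u}{p^2}\Big)^{-1}.
\]

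\textbf{Step 3 (rationality of $\Poinc(\Trunk^*_0)$).} By the analysis of Sections~\ref{ssec:alphapositive} and~\ref{ssec:alphazero}, $\Trunk^*_0$ is a finite tree: a cascade of height $\lfloor\alpha/u\rfloor$ with branching $p-1$, then $p$. To its leaves are attached either nothing or a Hensel tree. Lemma~\ref{lem:Shensel}, in its shifted form, gives a rational contribution with denominator $p-T$ for each Hensel tree. Together with $S_A$ from Lemma~\ref{lem:Sstalk}, this gives rationality, with denominators among $p-T$ and $p^2-T^u$.

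The main obstacle is Step~1 in the case $u\mid\alpha$ with $\beta\ge\gamma$ or $\beta$ large. There I must make sure that no further infinite stalk of thickness $\ge2$ hides inside $\Trunk^*_0$. For example, with $\alpha=0$ and $u_0x^u+v_0y^u$, other lines of roots $x\equiv\zeta y$ could a priori carry such stalks when $p\mid u$. I would handle this by invoking the independence of the finitely many stalks arranged in paragraph~\ref{ssec:reduc}, via Lemma~\ref{lem:finite}. Any such stalk is cut off and treated separately, by the same procedure, as its own normalized piece. So the remaining part of $\Trunk^*_0$ contains only finitely many Hensel trees, and it is identical for every $k$ because the excision is itself compatible with the self-similarity.
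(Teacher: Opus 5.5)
Your proposal is correct and is essentially the paper's proof. Stability of $P_k(x,y)=x^uU(p^kx)+p^\alpha y^uV(p^ky)$ makes all starred trunks $\Trunk^*_k$ isomorphic, their xylon contributions scale by $(T^u/p^2)^k$, and so $S(T)=S_A(T)+\bigl(1-\frac{T^u}{p^2}\bigr)^{-1}\Poinc(\Trunk^*_0)$, where $\Poinc(\Trunk^*_0)$ is rational with only possible denominator $p-T$.

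The ``main obstacle'' you raise is vacuous, so no excision is needed. The computation of Section~\ref{ssec:alphazero} is valid also when $p\mid u$, since $\gamma=\val_p(u)+1<m$, and it shows that every non-principal branch ends in a terminal vertex or in finitely many Hensel trees. In any case, a stalk of thickness $\ge 2$ inside $\Trunk^*_0$ would, by self-similarity, reappear in every $\Trunk^*_k$ and so contradict Lemma~\ref{lem:finite}.
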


\begin{proof}
	In this case, each successive transform $P_k(x,y)$ of $P_0(x,y)$ 
	has the form $P_k(x,y) = x^u U(p^kx) + p^\alpha y^u V(p^ky)$, 
	which is essentially stable. It follows that all the starred trunks 
	$\Trunk_k^*$ are isomorphic to one another. Ignoring their initial 
	level, they therefore have the same Poincaré series, say $H(T)$, 
	which is a rational function by the previous remark.
	
	From the earlier relation we obtain 
	$\Poinc(\Trunk_k^*) = \frac{T^{ku}}{p^{2k}} H(T)$. 
	Using this identity, we deduce 
	\[
	S(T) = S_A(T)+\sum_{k=0}^{+\infty}\frac{T^{ku}}{p^{2k}}H(T)
	= S_A(T)+\left(1-\frac{T^u}{p^2}\right)^{-1}H(T),
	\]
	which proves the proposition and, in particular, Proposition~\ref{prop:rat} 
	in this case.
\end{proof}

From now on, we assume that $v > u$. In this case, in the formula above
$S(T) = S_A(T) + \sum_{k=0}^{+\infty} \Poinc(\Trunk^*_k)$,
we decompose the second summand as
\[
\sum_{k=0}^{u-1} \sum_{l \ge 0} \Poinc(\Trunk^*_{k+lu}).
\]
To prove the rationality of $S(T)$, it is sufficient to prove the rationality 
of each series 
\[
\sum_{l \ge 0} \Poinc(\Trunk^*_{k_0+lu})
\]
for $k_0 = 0, \ldots, u-1$.

But we have already established that each series 
$\Poinc(\Trunk^*_{k_0+lu})$ is either a polynomial or a rational function 
whose only possible denominator is $p - T$.

Consequently, in order to prove rationality, it suffices to show that 
there exists an integer $L$ such that 
$\sum_{l \ge L} \Poinc(\Trunk^*_{k_0+lu})$ is rational. 
In other words, to prove rationality, we may assume that we are at a sufficiently 
large level along the principal stalk.

We choose $L$ any integer such that $\alpha+Lu(v-u) \ge \gamma u$, and set $a=\alpha+Lu(v-u)$. 
We will prove the rationality of the Poincaré series for the polynomial $\tilde P_0$ with this exponent $a$ (instead of the original $P_0$ of exponent $\alpha$)
using the fact that $a \ge \gamma u$ (see parts D and E below).
To this end, we decompose the vertices into two sets: on one hand,
the set of the interior vertices of all the starred trunks, with thickness
$u$; on the other hand the terminal vertices of all these starred
trunks. In the case $a\%u\neq0$, these vertices have thickness $a\%u$; 
in the other case $a\%u=0$, these terminal vertices may be either
simple, or belonging to a Hensel tree.

\subsection*{B. Series for all interior vertices}

Consider an interior vertex of an outgoing branch (among the $p-1$ possible ones) of a starred trunk $\Trunk^*(P_{lu})$. It is characterized by the integer $l$ and another integer $m$ which is the distance between the vertex and the departure vertex of this starred trunk.
The height of this general vertex is $k = lu+m$, its thickness is $t=u$, and the value of the tree-top function is $\varphi = (lu+m)u$.

Thus, the contribution of this vertex is:
\[
\Xi_{l,m}(T) 
= \frac{T^{\varphi-t+1}}{p^{2k}}(1+T+\cdots+T^{u-1})
=\frac{T^{(lu+m)u-u+1}}{p^{2(lu+m)}}(1+T+\cdots+T^{u-1}).
\]

At this height $k=lu+m$ there are $p^{m-1}$ vertices.
The index $m$ ranges from $1$ to $\lfloor \frac {a+lu(v-u)} u \rfloor = \lfloor \frac a u \rfloor + l(v-u)$.
By the xylon formula (Theorem~\ref{th:xylon}), the Poincaré series is the sum of the $\Xi_{l,m}(T)$:
\begin{align*}
	S(T) 
	&= \sum_{l \ge 0} \ \sum_{m=1}^{l(v-u)+\lfloor \frac a u \rfloor}  p^{m-1} \Xi_{l,m}(T) \\
	&= \big(1+T+\cdots+T^{u-1}\big) \sum_{l\ge0} \  \sum_{m=1}^{l(v-u)+\lfloor \frac a u \rfloor} \frac{T^{mu -u+1}}{p^{m+1}} \frac{T^{lu^2}}{p^{2lu}} \\
	&= A(T) \sum_{l\ge1} \ \sum_{m=1}^{l(v-u)} \xi_{l,m}
	\ \ + \ \ A(T) \sum_{l\ge0} \ \sum_{m=l(v-u)+1}^{l(v-u)+\lfloor \frac a u \rfloor} \xi_{l,m}
	,
\end{align*}
where we have set $A(T) = 1+T+\cdots+T^{u-1}$ and
$\xi_{l,m} = \dfrac{T^{mu -u+1}}{p^{m+1}} \dfrac{T^{lu^2}}{p^{2lu}}$.

In the first summand, we write the Euclidean division $m = q(v-u)+r$, with here $1 \le r \le v-u$, and then interchange the sums:
\begin{align*}
	S_1(T) 
	&= A(T) \sum_{l \ge 1} \ \sum_{0\le q \le l-1} \ \sum_{1 \le  r \le v-u} \xi_{l,m} \\
	&= A(T) \sum_{q \ge 0} \ \sum_{l\ge q+1} \ \sum_{1 \le  r \le v-u} \xi_{l,m} \\
	& = A(T) \frac{T}{p^2}\sum_{q \ge 0}  \frac{T^{qu(v-u)}}{p^{q(v-u)}} \ \sum_{l\ge q+1} \frac{T^{lu^2}}{p^{2lu}} \ \sum_{1 \le  r \le v-u} \frac{T^{(r-1)u}}{p^{r-1}}  \\
	& = A(T) B(T)  \frac{T}{p^2} \sum_{q \ge 0}  \frac{T^{qu(v-u)}}{p^{q(v-u)}} \ \sum_{l\ge q+1} \frac{T^{lu^2}}{p^{2lu}}.
\end{align*}
Here we have set
\[
B(T) = 1+\frac{T^{u}}{p} + \left( \frac{T^{u}}{p} \right)^2 + \cdots + \left(  \frac{T^{u}}{p} \right) ^{v-u-1}.
\]

Thus:
\begin{align*}
	S_1(T) 
	&= A(T) B(T) \frac{T}{p^2}\sum_{q \ge 0}  \frac{T^{qu(v-u)}}{p^{q(v-u)}} 
	\frac{T^{(q+1)u^2}}{p^{2(q+1)u}}
	\ \sum_{l\ge 0} \left(\frac{T^{u^2}}{p^{2u}}\right)^l  \\
	&= A(T) B(T) \frac{T}{p^2} \left(1-\frac{T^{u^2}}{p^{2u}}\right)^{-1}
	\sum_{q \ge 0}  	
	\frac{T^{u^2}}{p^{2u}} 
	\left(  \frac{T^{uv}}{p^{u+v}} \right) ^q \\
	&= A(T) B(T) C(T) \left(1-\frac{T^{u^2}}{p^{2u}}\right)^{-1} \left(1-\frac{T^{uv}}{p^{u+v}}\right)^{-1},
\end{align*}
where we have set
\[
C(T) = \frac{T^{u^2+1}}{p^{2u+2}}.
\]

The second is, with $m = k +l(v-u)$:
\begin{align*}
	S_2(T) 
	&= A(T)   \sum_{l\ge0} \ \sum_{m=l(v-u)+1}^{l(v-u)+\lfloor \frac a u \rfloor} \xi_{l,m} \\
	&= A(T)   \sum_{l\ge0} \ \sum_{k=1}^{\lfloor \frac a u \rfloor} 
	\frac{T^{(k+l(v-u))u -u+1}}{p^{k+l(v-u)+1}} \dfrac{T^{lu^2}}{p^{2lu}} \\
	&= A(T)  \left( \sum_{k=1}^{\lfloor \frac a u \rfloor} \frac{T^{uk -u+1}}{p^{k+1}} \right)\ \sum_{l\ge0} 	 \frac{T^{luv}}{p^{l(u+v)}} \\
	&= A(T) D(T) \left(1-\frac{T^{uv}}{p^{u+v}}\right)^{-1},
\end{align*}
with the polynomial $D(T) = \sum_{k=1}^{\lfloor \frac a u \rfloor} \frac{T^{uk -u+1}}{p^{k+1}}$.

Conclusion: $S(T) = S_1(T) + S_2(T) \in \Qq(T)$, with denominators $p^{2u}-T^{u^2}$ and $p^{u+v}-T^{uv}$.
We have computed the series for a single outgoing branch; for all $p-1$ outgoing branches, the associated series is $(p-1)S(T) \in \Qq(T)$.

\subsection*{C. Series for terminal vertices, case $u$ does not divide $a$}

We now compute the series associated with the vertices in the case $a \% u \neq 0$.
Thus, we count the contribution of a terminal vertex of thickness $a_0=a \% u$ on each outgoing branch of the starred trunk.

A terminal vertex is characterized by a single integer $l$: its height is
$k = lu+l(v-u) + \lfloor \frac{a}{u} \rfloor + 1 = lv + \lfloor \frac{a}{u} \rfloor + 1$,
its thickness is $t=a_0 = a \% u$, and the value of the tree-top function is
$\varphi = (lv+\lfloor \frac{a}{u} \rfloor)u + a_0$.
For this vertex:
\[
\Xi_{l}(T) 
=\frac{T^{luv + \lfloor \frac{a}{u} \rfloor u + 1}}{p^{2(lv + \lfloor \frac{a}{u} \rfloor + 1)}}(1+T+\cdots+T^{a_0-1}).
\]
All the $p^{l(v-u) + \lfloor \frac{a}{u} \rfloor}$ terminal vertices (for one branch issued from the principal trunk among the $p-1$ possible ones) give rise to the following series, which is indeed rational:
\begin{align*}
	S(T) 
	&= \sum_{l\ge0} p^{l(v-u) + \lfloor \frac{a}{u} \rfloor} \; \Xi_{l}(T) \\
	&= \big(1+T+\cdots+T^{a_0-1}\big) \sum_{l\ge0} \frac{T^{luv+ \lfloor \frac{a}{u} \rfloor u +1}}{p^{2(lv + \lfloor \frac{a}{u} \rfloor+1)-(l(v-u)+ \lfloor \frac{a}{u} \rfloor)}} \\
	&= \big(1+T+\cdots+T^{a_0-1}\big) \frac{T^{\lfloor \frac{a}{u} \rfloor u +1}}{p^{\lfloor \frac{a}{u} \rfloor+2}} \sum_{l\ge0} \left( \frac{T^{uv}}{p^{u+v}} \right)^l \\
	&= \big( 1+T+\cdots+T^{a_0-1} \big)\frac{T^{\lfloor \frac{a}{u} \rfloor u +1}}{p^{\lfloor \frac{a}{u} \rfloor+2}} \left( 1-\frac{T^{uv}}{p^{u+v}}\right)^{-1}.
\end{align*}
The denominator is $p^{u+v}-T^{uv}$.

\subsection*{D. Series for the case $u$ divides $a$, with terminal vertex}

Here we use the notations of Section~\ref{ssec:alphazero}
and the results of Section \ref{ssec:alphapositive}. With these notations,
we consider the case $0 <\beta < \gamma$ (if $\beta=0$ there is no terminal vertex).
We have a computation very similar to the previous case.
We have seen that $\gamma$ vertices after leaving the principal stalk, the value of $\beta$ becomes constant.
We thus group the terminal vertices of thickness $\beta$ into packets of $p^{l(v-u)+ \lfloor \frac{a}{u} \rfloor-\gamma}$ terminal vertices (here we use that $\lfloor \frac{a}{u} \rfloor \ge \gamma$).
\begin{align*}
	S(T) 
	&= \sum_{l\ge0} p^{l(v-u)+ \lfloor \frac{a}{u} \rfloor-\gamma} \; \Xi_{l}(T) \\
	&= \big(1+T+\cdots+T^{\beta-1}\big) \sum_{l\ge0} \frac{T^{luv+ \lfloor \frac{a}{u} \rfloor u +1}}{p^{2(lv+ \lfloor \frac{a}{u} \rfloor+1)-l(v-u)- \lfloor \frac{a}{u} \rfloor+\gamma}} \\
	&= \big(1+T+\cdots+T^{\beta-1}\big) \frac{T^{\lfloor \frac{a}{u} \rfloor u+1}}{p^{\lfloor \frac{a}{u} \rfloor+\gamma+2}} \sum_{l\ge0} \left( \frac{T^{uv}}{p^{u+v}} \right)^l \\
	&= \big(1+T+\cdots+T^{\beta-1}\big)\frac{T^{\lfloor \frac{a}{u} \rfloor u +1}}{p^{\lfloor \frac{a}{u} \rfloor+\gamma+2}} \left( 1-\frac{T^{uv}}{p^{u+v}}\right)^{-1}.
\end{align*}

\subsection*{E. Series for the case $u$ divides $a$, with Hensel trees}

This occurs only when $u$ divides $a$. 
First, recall the generalization of Lemma~\ref{lem:Shensel} for a Hensel tree not attached to the root but to a vertex of height $k$ and with tree-top value $\varphi$ (with $n$ variables):
\[
S_{k,\varphi}(T) = \frac{T^\varphi}{p^{nk}} \left(1-\frac{T}{p}\right)^{-1}.
\]
Again we we use the notations of Section~\ref{ssec:alphazero}
and the results of Section \ref{ssec:alphapositive}.
The vertices where the Hensel trees are attached have height
$k = lu+l(v-u)+ \lfloor \frac{a}{u} \rfloor + 1 = lv+ \lfloor \frac{a}{u} \rfloor + 1$ at a vertex where the thickness is $t=\gamma$, and tree-top value $\varphi =  (lv+ \lfloor \frac{a}{u} \rfloor)u + \gamma$.
We have seen that, $\gamma$ vertices after leaving the principal stalk, the value of $\beta$ becomes constant.
There are $p^{l(v-u)+ \lfloor \frac{a}{u} \rfloor -\gamma}$ such vertices (for one branch issued from the principal trunk, here we use that $\lfloor \frac{a}{u} \rfloor \ge \gamma$), so the total contribution of these Hensel trees is:
\begin{align*}
	S(T) 
	&= \sum_{l \ge 0} p^{l(v-u)+ \lfloor \frac{a}{u} \rfloor-\gamma}  
	\frac{T^{luv+\lfloor \frac{a}{u} \rfloor u +\gamma}}
	{p^{2(lv+ \lfloor \frac{a}{u} \rfloor + 1)}} \left(1-\frac{T}{p}\right)^{-1}\\
	&= \left(1-\frac{T}{p}\right)^{-1} 
	\frac{T^{\lfloor \frac{a}{u} \rfloor u +\gamma}}
	{p^{\lfloor \frac{a}{u} \rfloor+\gamma+2}} 
	\sum_{l \ge 0}\left( \frac{T^{uv}}{p^{u+v}} \right)^l \\
	&= \left(1-\frac{T}{p}\right)^{-1} 
	\frac{T^{\lfloor \frac{a}{u} \rfloor u +\gamma}}
	{p^{\lfloor \frac{a}{u} \rfloor+\gamma+2}} 
	\left(1-\frac{T^{uv}}{p^{u+v}}\right)^{-1}.
\end{align*}
This is once again a rational fraction with denominators $p-T$ and $p^{u+v}-T^{uv}$.



\bibliographystyle{plain}
\bibliography{igusa.bib}

@article {BLQ2013,
	AUTHOR = {Berthomieu, J\'{e}r\'{e}my and Lecerf, Gr\'{e}goire and
	Quintin, Guillaume},
	TITLE = {Polynomial root finding over local rings and application to
	error correcting codes},
	JOURNAL = {Appl. Algebra Engrg. Comm. Comput.},
	FJOURNAL = {Applicable Algebra in Engineering, Communication and
	Computing},
	VOLUME = {24},
	YEAR = {2013},
	NUMBER = {6},
	PAGES = {413--443},
	ISSN = {0938-1279,1432-0622},
	MRCLASS = {13P05 (94B05)},
	MRNUMBER = {3128698},
	MRREVIEWER = {Hiram\ H.\ L\'{o}pez},
	DOI = {10.1007/s00200-013-0200-5},
	URL = {https://doi.org/10.1007/s00200-013-0200-5},
}

@article{BD26,
	author       = {Bodin, Arnaud and Drouin, Christian},
	title        = {Solutions of a polynomial equation modulo a prime power},
	journal      = {The American Mathematical Monthly},
	note         = {To appear (2026). Preprint available at \url{https://arxiv.org/abs/2510.07168}},
	year         = {2026},
	eprint       = {2510.07168},
	archivePrefix= {arXiv},
	primaryClass = {math.NT}
}

@article {bollaerts,
	AUTHOR = {Bollaerts, Dirk},
	TITLE = {On the {P}oincar\'e{} series associated to the {$p$}-adic
	points on a curve},
	JOURNAL = {Acta Arith.},
	FJOURNAL = {Polska Akademia Nauk. Instytut Matematyczny. Acta Arithmetica},
	VOLUME = {51},
	YEAR = {1988},
	NUMBER = {1},
	PAGES = {9--30},
	ISSN = {0065-1036},
	MRCLASS = {11S80 (11F85 11G20 14G10 14G20)},
	MRNUMBER = {959783},
	MRREVIEWER = {Hernando\ Enrique\ Sierra-Morales},
	DOI = {10.4064/aa-51-1-9-30},
	URL = {https://doi.org/10.4064/aa-51-1-9-30},
}

@book {borevich-shafarevich,
	AUTHOR = {Borevich, A. I. and Shafarevich, I. R.},
	TITLE = {Number theory},
	SERIES = {Pure and Applied Mathematics},
	VOLUME = {Vol. 20},
	NOTE = {Translated from the Russian by Newcomb Greenleaf},
	PUBLISHER = {Academic Press, New York-London},
	YEAR = {1966},
	PAGES = {x+435},
	MRCLASS = {10.00},
	MRNUMBER = {195803},
}

@mastersthesis{chakrabarti,
	author       = {Sayak Chakrabarti},
	title        = {Multivariate polynomials modulo prime powers: their roots, zeta-function and applications},
	school       = {Department of Computer Science and Engineering, IIT Kanpur},
	year         = {2022},
	type         = {Master's thesis},
	address      = {Kanpur, India}
}

@inproceedings {chakrabarti-saxena,
	AUTHOR = {Chakrabarti, Sayak and Saxena, Nitin},
	TITLE = {An effective description of the roots of bivariates {$\mod
	p^k$} and the related {I}gusa's local zeta function},
	BOOKTITLE = {Proceedings of the {I}nternational {S}ymposium on {S}ymbolic
	\& {A}lgebraic {C}omputation ({ISSAC} 2023)},
	PAGES = {135--144},
	PUBLISHER = {ACM, New York},
	YEAR = {2023},
	ISBN = {979-8-4007-0039-2},
	MRCLASS = {11D88 (68W30)},
	MRNUMBER = {4618421},
	DOI = {10.1145/3597066.3597115},
	URL = {https://doi.org/10.1145/3597066.3597115},
}

@incollection {denef,
	AUTHOR = {Denef, Jan},
	TITLE = {Report on {I}gusa's local zeta function},
	NOTE = {S\'eminaire Bourbaki, Vol.\ 1990/91},
	JOURNAL = {Ast\'erisque},
	FJOURNAL = {Ast\'erisque},
	NUMBER = {201-203},
	YEAR = {1991},
	PAGES = {Exp. No. 741, 359--386},
	ISSN = {0303-1179,2492-5926},
	MRCLASS = {11S40 (14G20)},
	MRNUMBER = {1157848},
	MRREVIEWER = {Boris\ Datskovsky},
}

@inproceedings {DS2020,
	AUTHOR = {Dwivedi, Ashish and Saxena, Nitin},
	TITLE = {Computing {I}gusa's local zeta function of univariates in
	deterministic polynomial-time},
	BOOKTITLE = {A{NTS} {XIV}---{P}roceedings of the {F}ourteenth {A}lgorithmic
	{N}umber {T}heory {S}ymposium},
	SERIES = {Open Book Ser.},
	VOLUME = {4},
	PAGES = {197--214},
	PUBLISHER = {Math. Sci. Publ., Berkeley, CA},
	YEAR = {2020},
	ISBN = {978-1-935107-08-8; 978-1-935107-07-1},
	MRCLASS = {11S40 (11Y16 14G10 68W30)},
	MRNUMBER = {4235114},
	MRREVIEWER = {Tommy\ Hofmann},
	DOI = {10.2140/obs.2020.4.197},
	URL = {https://doi.org/10.2140/obs.2020.4.197},
}

@incollection {DMS2019,
	AUTHOR = {Dwivedi, Ashish and Mittal, Rajat and Saxena, Nitin},
	TITLE = {Counting basic-irreducible factors mod {$p^k$} in
	deterministic poly-time and {$p$}-adic applications},
	BOOKTITLE = {34th {C}omputational {C}omplexity {C}onference},
	SERIES = {LIPIcs. Leibniz Int. Proc. Inform.},
	VOLUME = {137},
	PAGES = {Art. No. 15, 29},
	PUBLISHER = {Schloss Dagstuhl. Leibniz-Zent. Inform., Wadern},
	YEAR = {2019},
	ISBN = {978-3-95977-116-0},
	MRCLASS = {68R05 (11T06)},
	MRNUMBER = {3984620},
}

@inproceedings {DMS2019P4,
    AUTHOR = {Dwivedi, Ashish and Mittal, Rajat and Saxena, Nitin},
     TITLE = {Efficiently factoring polynomials modulo {$p^4$}},
 BOOKTITLE = {I{SSAC}'19---{P}roceedings of the 2019 {ACM} {I}nternational
              {S}ymposium on {S}ymbolic and {A}lgebraic {C}omputation},
     PAGES = {139--146},
 PUBLISHER = {ACM, New York},
      YEAR = {2019},
      ISBN = {978-1-4503-6084-5},
   MRCLASS = {11T06 (11Y05 13P05 68W30)},
  MRNUMBER = {4007453},
       DOI = {10.1145/3326229.3326233},
       URL = {https://doi.org/10.1145/3326229.3326233},
}

@article {hayes-nutt,
	AUTHOR = {Hayes, David R. and Nutt, Michael D.},
	TITLE = {Reflective functions on {$p$}-adic fields},
	JOURNAL = {Acta Arith.},
	FJOURNAL = {Polska Akademia Nauk. Instytut Matematyczny. Acta Arithmetica},
	VOLUME = {40},
	YEAR = {1981/82},
	NUMBER = {3},
	PAGES = {229--248},
	ISSN = {0065-1036},
	MRCLASS = {12B40},
	MRNUMBER = {664613},
	MRREVIEWER = {L.\ Corwin},
	DOI = {10.4064/aa-40-3-229-248},
	URL = {https://doi.org/10.4064/aa-40-3-229-248},
}

@article {igusa,
	AUTHOR = {Igusa, Jun-ichi},
	TITLE = {Complex powers and asymptotic expansions. {I}. {F}unctions of
	certain types},
	JOURNAL = {J. Reine Angew. Math.},
	FJOURNAL = {Journal f\"ur die Reine und Angewandte Mathematik. [Crelle's
	Journal]},
	VOLUME = {268/269},
	YEAR = {1974},
	PAGES = {110--130},
	ISSN = {0075-4102,1435-5345},
	MRCLASS = {10H25 (12B40)},
	MRNUMBER = {347753},
	MRREVIEWER = {Stephen\ Haris},
	DOI = {10.1515/crll.1974.268-269.110},
	URL = {https://doi.org/10.1515/crll.1974.268-269.110},
}

@article {Kopp,
	AUTHOR = {Kopp, Leann and Randall, Natalie and Rojas, J. Maurice and
	Zhu, Yuyu},
	TITLE = {Randomized polynomial-time root counting in prime power rings},
	JOURNAL = {Math. Comp.},
	FJOURNAL = {Mathematics of Computation},
	VOLUME = {89},
	YEAR = {2020},
	NUMBER = {321},
	PAGES = {373--385},
	ISSN = {0025-5718,1088-6842},
	MRCLASS = {11Y16 (11T06 11Y99 16P10)},
	MRNUMBER = {4011547},
	MRREVIEWER = {Steven\ D.\ Galbraith},
	DOI = {10.1090/mcom/3431},
	URL = {https://doi.org/10.1090/mcom/3431},
}

@incollection {PoVe,
	AUTHOR = {Potemans, Naud and Veys, Willem},
	TITLE = {Introduction to {$p$}-adic {I}gusa zeta functions},
	BOOKTITLE = {{$p$}-adic analysis, arithmetic and singularities},
	SERIES = {Contemp. Math.},
	VOLUME = {778},
	PAGES = {71--102},
	PUBLISHER = {Amer. Math. Soc.},
	YEAR = {2022},
	ISBN = {978-1-4704-6779-1},
	MRCLASS = {11S40 (11M41 11S80 14E15 14G10 14G20 14H20)},
	MRNUMBER = {4419243},
	MRREVIEWER = {David\ Villa-Hern\'andez},
	DOI = {10.1090/conm/778/15655},
	URL = {https://doi.org/10.1090/conm/778/15655},
}

@article {ScSt,
	AUTHOR = {Schmidt, Wolfgang M. and Stewart, C. L.},
	TITLE = {Congruences, trees, and {$p$}-adic integers},
	JOURNAL = {Trans. Amer. Math. Soc.},
	FJOURNAL = {Transactions of the American Mathematical Society},
	VOLUME = {349},
	YEAR = {1997},
	NUMBER = {2},
	PAGES = {605--639},
	ISSN = {0002-9947,1088-6850},
	MRCLASS = {11D88 (11A07 11S05)},
	MRNUMBER = {1340185},
	MRREVIEWER = {Jeffrey\ Lin\ Thunder},
	DOI = {10.1090/S0002-9947-97-01547-X},
	URL = {https://doi.org/10.1090/S0002-9947-97-01547-X},
}

@article {ZG2003,
	AUTHOR = {Zuniga-Galindo, W. A.},
	TITLE = {Computing {I}gusa's local zeta functions of univariate
	polynomials, and linear feedback shift registers},
	JOURNAL = {J. Integer Seq.},
	FJOURNAL = {Journal of Integer Sequences},
	VOLUME = {6},
	YEAR = {2003},
	NUMBER = {3},
	PAGES = {Article 03.3.6, 18},
	ISSN = {1530-7638},
	MRCLASS = {11S40 (94A55)},
	MRNUMBER = {2046406},
	MRREVIEWER = {Margaret\ M.\ Robinson},
}

\appendix

\newpage

\section{Example $P(x,y) = x^2-y^3$}
\label{sec:example}

The goal of this appendix is to illustrate the results of the previous sections with the example
$P(x,y) = x^2-y^3$, where $p>2$ is a prime number.
In particular, we compute the trunk of $P$, its Poincaré series, and the first values $N_e$ for the number of solutions of
$x^2-y^3 \equiv 0 \pmod{p^e}$.
We recover the same results as those obtained by Denef’s formula \cite{PoVe}.

We follow step by step the method of Section~\ref{sec:rational}, which will allow us to make the series $S(T)$ completely explicit.

\subsection{Preliminaries}

Let us make the following remarks, which we will need later:
\begin{itemize}
	\item For every prime $p$, the congruence $x^2-y^3 \equiv 0 \pmod{p}$ has $p$ solutions.
	They are obtained via the parametrization $(t^3,t^2)$ for $t\in \llbracket 0,p-1\rrbracket$.
	
	\item It is known that there are $\frac{p-1}{2}$ elements $y \in \Zz/p\Zz \setminus\{0\}$ that are squares modulo $p$.
	
	\item Finally, $y^3$ is a square modulo $p$ if and only if $y$ is.
	A proof using the Legendre symbol is:
	$(\frac{y^3}{p}) = +1 \iff (\frac{y}{p})^3 = +1
	\iff (\frac{y}{p}) = +1$.
\end{itemize}

As in Section~\ref{sec:rational}, we compute the trunk (see the figures on the following pages) and we partition this trunk with an explicit computation of the series associated with each piece.

\subsection{Principal stalk}

Above $(0,0)$, there is an infinite stalk whose vertices have thickness $t=2$.
We denote by $P_k(x,y) = x^2-p^k y^3$ the $k$-th successor of $P$ for the root $(0,0)$.

Let us compute the Poincaré series $S_A(T)$ associated with this stalk via the xylon formula~\ref{th:xylon}.
The vertices are indexed by their height $k \ge 0$.
For $k>0$ the tree-top value is $\varphi = 2k$, and the thickness is $t=2$.
We also refer to Lemma~\ref{lem:Sstalk}.
Thus:
\begin{align*}
	S_A(T) 
	&= 1+\sum_{k > 0} \Xi_k(T) 
	= 1 \  + \ \sum_{k>0} \frac{T^{2k-1}}{p^{2k}} (1+T) \\
	&=  1 \  + \ ( 1+T) \frac{T}{p^2} \sum_{k \ge 0} \left( \frac{T^2}{p^2} \right)^k   \\
	&=  1 \  + \ ( 1+T) \frac{T}{p^2} \left( 1-\frac{T^2}{p^2} \right)^{-1}   \\ 
	&= \frac{p^2+T}{p^2-T^2}
\end{align*}

\subsection{Other branches from the root}

We now consider the nonzero roots of $P(x,y) \equiv 0 \pmod{p}$.
We have seen that there are $p-1$ nonzero solutions $(r_i,s_i)$ (their exact values depend on $p$, but not their number).
These branches must be studied separately since, in the language of Section~\ref{sec:rational}, they are not ``sufficiently high on the trunk''.
For such a nonzero root $(r,s)$, we have:
\[
P(r+px, s+py) = (r+px)^2 - (s+py)^3 = r^2-s^3 + 2rpx - 3s^2py + \cdots
\]
We know that $p \mid r^2-s^3$.
For $p>2$, we have $p^2 \notdivides 2rp$, hence the thickness is $t=1$.
The successor is
\[
Q_1(x,y) = \frac{r^2-s^3}{p} + 2rx -3s^2y + \cdots.
\]
Since the coefficient of $x$ is nonzero modulo $p$, we are in the situation of Hensel’s lemma.
Conclusion: to each of the $p-1$ nonzero roots of $\reduc{P}$, we associate an outgoing branch from the root, which is in fact a Hensel tree.

Each of these trees is attached at height $k_0=1$ and with tree-top value $\varphi_0=1$.
We apply $p-1$ times the shifted version of Lemma~\ref{lem:Shensel}:

\[
S_B(T) 
= (p-1)  \frac{T^{\varphi_0}}{p^{2k_0}} \left(1-\frac{T}{p}\right)^{-1}
= \frac{(p-1)T}{p(p-T)}.
\]

Remark: the notation for the series $S_B(T), S_C(T), \ldots$ is slightly different here from that of Section~\ref{sec:rational}.

\subsection{Branches from an odd height}

See Figure~\ref{fig:odd}.
We start with height $1$ and $P_1(x,y)=x^2-py^3$: the nonzero roots of
$\reduc{P_1}(x,y) \equiv  x^2 \pmod{p}$ are the $(0,s)$ with
$s\in\llbracket1,p-1\rrbracket$.
For one of these roots, we have
$P_1(0+px,s+py) = p Q_1(x,y)$ with
$Q_1(x,y) = px^2 - (s+py)^3$.
Thus each root has thickness $t=1$. But then
$\reduc{Q_1}(x,y) \equiv s \not\equiv 0 \pmod{p}$, and the trunk stops there.
Conclusion: the branches (other than the principal stalk) issued from the vertex of height $1$
consist of $p-1$ terminal vertices of thickness $1$.

\bigskip

More generally, for a vertex of height $k=2l+1$, the polynomial is
$P_k(x,y) = x^2-p^ky^3$.
We always exclude the root $(0,0)$.
The structure of the outgoing branches is as follows:
\begin{itemize}
	\item $p-1$ outgoing branches from the vertex of height $k$
	(there would be $p$ in total if we added the branch of the principal stalk),
	which lead to $p-1$ vertices of thickness $2$.
	
	\item From each of these vertices are attached $p$ vertices of thickness $t=2$,
	to which are attached $p$ vertices of thickness $t=2$, and so on,
	up to height $k+l$.
	
	\item From each of the vertices of height $k+l$ are attached $p$ vertices of thickness $t=1$,
	and the trunk stops there.
\end{itemize}

\bigskip

We call the \defi{partial trunk} the sub-trunk consisting of the vertices of thickness $2$,
and we then study the contribution of the terminal vertices (of thickness $1$).

We compute the contribution of the partial trunk (odd case) for each of the $p-1$ outgoing branches.
A vertex on such a branch is indexed by $(l,m)$, where $2l+1$ is the height of the attachment point
on the principal stalk and $m$ is the distance from this attachment point (with $1\le m \le l$).
Thus the height of the vertex is $k = (2l+1)+m$, and $\varphi = 2k$ with $t=2$.
For a fixed index $m$ there are $p^{m-1}$ vertices.
\begin{align*}
	\tilde{S}_C(T) 
	&= \sum_{l>0}\sum_{m=1}^{l} p^{m-1} \Xi_{l,m}(T) 
	= \sum_{l>0}\sum_{m=1}^{l} p^{m-1} \frac{T^{\varphi-t+1}}{p^{2k}}(1+T) \\
	&= (1+T)\sum_{l \ge 1}\sum_{m=1}^{l} \frac{T^{4l}}{p^{4l}} \frac{T^{2m+1}}{p^{m+3}} \\
	&= (1+T)\sum_{m \ge 1}\sum_{l \ge m} \frac{T^{4l}}{p^{4l}} \frac{T^{2m+1}}{p^{m+3}} \\
	&= (1+T)\sum_{m \ge 1}\frac{T^{2m+1}}{p^{m+3}}\frac{T^{4m}}{p^{4m}} \sum_{l\ge0} \frac{T^{4l}}{p^{4l}} \\
	&= (1+T)\sum_{m \ge 1}\frac{T^{6m+1}}{p^{5m+3}} \left(1-\frac{T^{4}}{p^{4}}\right)^{-1}	\\
	&= (1+T)\frac{T^7}{p^8} \left(1-\frac{T^{6}}{p^{5}}\right)^{-1}\left(1-\frac{T^{4}}{p^{4}}\right)^{-1} \\
	&= \frac{p(1+T)T^7}{(p^4-T^4)(p^5-T^6)}.
\end{align*}
Summing over the $p-1$ branches gives:
\[
S_C(T) = (p-1)\tilde{S}_C(T) = \frac{p(p-1)(1+T)T^7}{(p^4-T^4)(p^5-T^6)}.
\]

It remains to compute the contribution of the terminal vertices.
There are $p^l$ of them, at height $k=3l+2$, with $\varphi=6l+3$ and $t=1$.
\begin{align*}
	\tilde{S}_D(T) 
	&= \sum_{l \ge 0} p^l \Xi_l(T) 
	= \sum_{l \ge 0} p^l \frac{T^{\varphi-t+1}}{p^{2k}}(1) \\
	&= \sum_{l \ge 0} \frac{T^{6l+3}}{p^{5l+4}}
	= \frac{T^3}{p^4}\left(1-\frac{T^{6}}{p^{5}}\right)^{-1}.
\end{align*}
Summing over the $p-1$ branches gives:
\[
S_D(T) = (p-1)\tilde{S}_D(T) = \frac{p(p-1)T^3}{p^5-T^6}.
\]

\subsection{Branches from an even height}

See Figure~\ref{fig:even}.
We consider a vertex of the principal stalk of height $k=2l$ with $l>0$.

Let us start with height $2$ and $P_2(x,y) = x^2-p^2y^3$.
The roots modulo $p$, other than $(0,0)$, are the $(0,s)$ with
$s \in \llbracket 1, p-1 \rrbracket$.
Each root $(0,s)$ has thickness $t=2$ and has successor
$Q_1(x,y) = x^2-(s+py)^3$.

We now focus on $Q_1(x,y)$, with $s$ fixed.
Since $\reduc{Q_1}(x,y) \equiv x^2-s^3 \pmod{p}$, we discuss according to whether $s^3$ is a square modulo $p$:
\begin{itemize}
	\item if $s$ is not a square modulo $p$, then neither is $s^3$, and thus
	$\reduc{Q_1}(x,y) \equiv 0 \pmod{p}$ has no solutions.
	The trunk stops there.
	This happens for $\frac{p-1}{2}$ nonzero values of $s$ that are not squares.
	
	\item if $s \neq 0$ is a square modulo $p$, then $s^3$ is also a square.
	In this case $\reduc{Q_1}(x,y) \equiv 0 \pmod{p}$ has $2p$ solutions
	$(r'_1,s')$ and $(r'_2,s')$, with two possible values for the first coordinate
	($r'_2=-r'_1$) and $s' \in \llbracket 0,p-1 \rrbracket$ arbitrary.
	Fix such a solution $(r',s')$.
	Then
	\[
	Q_1(r'+px,s'+py) = r'^2-s^3 + 2pr'x + \cdots
	\]
	This is therefore a vertex of thickness $t=1$, and indeed a Hensel configuration.
	Conclusion in this case: from the principal stalk there branch off $\frac{p-1}{2}$ vertices of thickness $t=2$.
	To each of these vertices are attached $2p$ Hensel trees.
\end{itemize}
Conclusion: there are $p-1$ outgoing branches from the vertex of height $2$:
half lead to terminal vertices of thickness $2$; the other half lead to vertices of thickness $2$
to which, for each such vertex, $2p$ Hensel trees are attached.

\medskip

Remark: for this example there are no vertices of Case D of Section~\ref{sec:rational}.
Indeed we assumed $u=2$ and $p>2$, hence $\val_p(pu)=1$.
Thus for $\gamma = \val_p(pu)$, there is no $\beta$ satisfying $0 < \beta < \gamma$.

\bigskip

More generally, for a vertex of the principal stalk of height $k=2l$ (with $l>0$),
the polynomial is $P_k(x,y) = x^2-p^ky^3$, and we exclude the root $(0,0)$ from the discussion.
We denote by $(0,s)$ with $s \in \llbracket 1 , p-1 \rrbracket$ the roots of $\reduc{P_k}$.
The structure of the outgoing branches is as follows:
\begin{itemize}
	\item $p-1$ outgoing branches from the vertex of height $k$
	(there would be $p$ in total if we counted the branch of the principal stalk),
	which lead to $p-1$ vertices of thickness $2$.
	
	\item From each of these vertices are attached $p$ vertices with $t=2$,
	to which are attached $p$ vertices with $t=2$, and so on,
	up to height $k+l$.
	
	\item For the end of the partial trunk:
	\begin{itemize}
		\item for half of these vertices of height $k+l$, the partial trunk stops there:
		these are the vertices arising from a branch where $s$ is not a square modulo $p$,
		
		\item for the other half, to each of these vertices of height $k+l$ are attached $2p$ Hensel trees:
		these are the vertices arising from a branch where $s$ is a square modulo $p$.
	\end{itemize}
\end{itemize}

\bigskip

The contribution of the partial trunk (even case) for the $p-1$ outgoing branches is very similar to the odd case.
A vertex of the partial trunk is indexed by $(l,m)$ with height $k = 2l+m$, $\varphi = 2k$, $t=2$.
For a fixed index $m$, there are $p^{m-1}$ vertices.
\begin{align*}
	S_E(T) 
	&= (p-1)\sum_{l>0}\sum_{m=1}^{l} p^{m-1} \Xi_{l,m}(T) \\
	&= \cdots \\
	&= (p-1)(1+T) \frac{T^5}{p^6}\left(1-\frac{T^{4}}{p^{4}}\right)^{-1} \left(1-\frac{T^{6}}{p^{5}}\right)^{-1}\\
	&= \frac{p^3(p-1)(1+T)T^5}{(p^4-T^4)(p^5-T^6)}.
\end{align*}

It remains to compute the contribution of the Hensel trees, for the $\frac{p-1}{2}$ outgoing branches associated with an $s$
that is a square modulo $p$ (for the other branches the trunk is limited to the partial trunk).

For the height $k=2l$ of the attachment vertex, there are thus $\frac{p-1}{2}$ outgoing branches.
For each of these branches, there are $p^{l-1}$ vertices at the end of the partial trunk, and at each such vertex $2p$ Hensel trees are attached.
These Hensel trees are attached at height $k_0 = 3l+1$ with tree-top value $\varphi_0 = 6l+1$.
Thus:
\begin{align*}
	S_F(T) 
	&= \frac{p-1}{2} \sum_{l>0} p^{l-1} (2p)  \frac{T^{\varphi_0}}{p^{2k_0}} \left(1-\frac{T}{p}\right)^{-1}\\
	&= (p-1) \left(1-\frac{T}{p}\right)^{-1} \sum_{l>0}\frac{T^{6l+1}}{p^{5l+2}} \\
	&= (p-1) \left(1-\frac{T}{p}\right)^{-1} \frac{T^7}{p^7} \left(1-\frac{T^6}{p^5}\right)^{-1} \\
	&= \frac{(p-1)T^7}{p(p-T)(p^5-T^6)}.
\end{align*}

\subsection{The trunk of $x^2-y^3$}

Before drawing the trunk, let us first recall the trunk of a Hensel tree.
In what follows, we will symbolize such a Hensel tree by the notation \circledtextH.

\begin{figure}[H]
	\myfigure{1.0}{
\begin{tikzpicture}[scale=2]

\tikzset{
  line/.style = {
  },
  vector/.style = {
    thick,-latex
  },
  dot/.style = {
    insert path={
      node[scale=3]{.}
    }
  }
}

\def\p{3}
 \path
   (0,0) coordinate (O)
 ;

\foreach \i in {0,...,2} {
     \path (\i-1,1) coordinate (P\i1);
  }

\foreach \i in {0,...,8} {
     \path ({(\i-4)/3},2) coordinate (P\i2);
  }
\foreach \i in {0,...,26} {
     \path ({(\i-13)/9},3) coordinate (P\i3);
  }

  \foreach \i in {0,1,2} {
     \path (O) edge[line] (P\i1);
  }
  \foreach \i in {0,1,2} {
     \foreach \k in {0,1,2} {
         \pgfmathtruncatemacro\ii{\i+3*\k}
          \path (P\k1) edge[line] (P\ii2);
     }
  }

  \foreach \i in {0,1,2} {
     \foreach \k in {0,...,8} {
         \pgfmathtruncatemacro\ii{\i+3*\k}
          \path (P\k2) edge[line,thin,dashed,] (P\ii3);
     }
  }

 \path
   (O) [dot] node{};  
 ;
  \foreach \i in {0,...,2} {
     \path  (P\i1) [dot] {}; 
  }
  \foreach \i in {0,...,8} {
     \path  (P\i2) [dot] {};
  }
%
%

  \path (2,1) node{$\# = p$};
  \path (2,2) node{$\# = p^2$};


\end{tikzpicture}%

	}
	\caption{A Hensel tree for $n=2$ variables, drawn here for $p=3$.
		From each vertex there are $p$ outgoing branches. All vertices have thickness $1$.}
	\label{fig:hensel}
\end{figure}
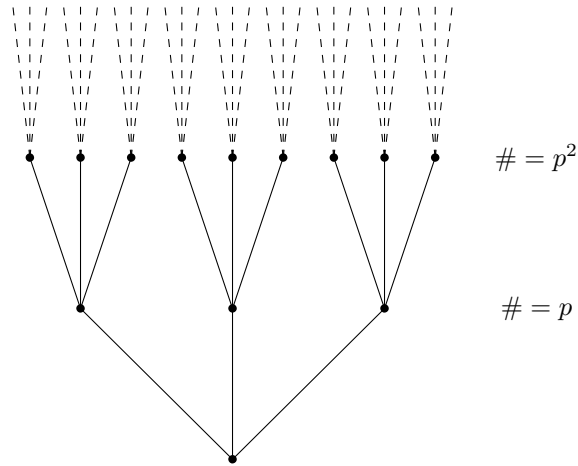

\medskip

We now depict the trunk of $P_0(x,y)=x^2-y^3$ up to height $3$.
For a terminal vertex of the trunk (that is, one having no successor),
we sometimes add the symbol $\varnothing$ above the vertex in order to clarify the diagram.

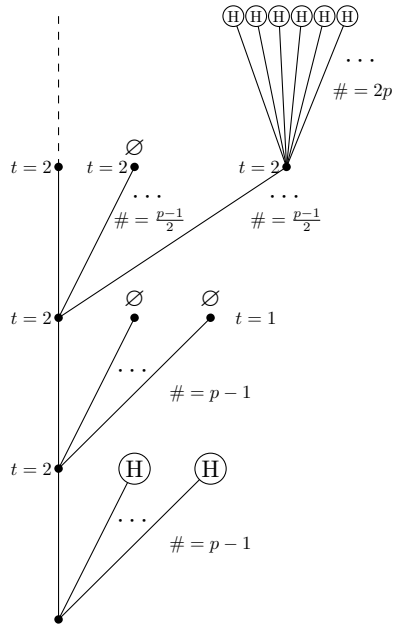
\begin{figure}[H]
	\myfigure{1.0}{
\begin{tikzpicture}[scale=2]

\tikzset{
  line/.style = {
  },
  vector/.style = {
    thick,-latex
  },
  dot/.style = {
    insert path={
      node[scale=3]{.}
    }
  }
}

 \path
   (0,0) coordinate (P0)
	(0,1) coordinate (P1)
	(0,2) coordinate (P2)
	 (0,3) coordinate (P3)
	 (0,4) coordinate (P4)
	(0,4.7) coordinate (P5)
 ;

\path 
	  (P0) edge[line] (P1)
	  (P1) edge[line] (P2)
	  (P2) edge[line] (P3)
	  (P3) edge[line, dashed] (P4)
;

 \path
   (P0) [dot] node{}
	(P1) [dot] node[left, scale=0.7]{$t=2$}
	(P2) [dot] node[left, scale=0.7]{$t=2$}
	(P3) [dot] node[left, scale=0.7]{$t=2$}
 ;

 \path
   (0.5,1) coordinate (P01)
	(1,1) coordinate (P02)
 ;

  \foreach \i in {1,2} {
       \path (P0) edge[line] (P0\i);
		\path (P0\i) node[draw,circle,inner sep=1pt,fill=white, scale=0.9]{H};
  }

\node at (0.5,0.65) [scale=1]{$\cdots$};
\path (1.0,0.5) node[scale=0.7]{$\#=p-1$}; 
\path (1.3,2) node[scale=0.7]{$t=1$}; 

 \path
   (0.5,2) coordinate (P11)
	(1,2) coordinate (P12)
 ;

  \foreach \i in {1,2} {
       \path (P1) edge[line] (P1\i);
		\path (P1\i) [dot] node[above]{$\varnothing$};
  }
\node at (0.5,1.65) [scale=1]{$\cdots$};
\path (1.0,1.5) node[scale=0.7]{$\#=p-1$};

 \path
   (0.5,3) coordinate (P31)
	(1.5,3) coordinate (P32)
 ;

  \foreach \i in {1,2} {
       \path (P2) edge[line] (P3\i);

  }
		\path (P31) [dot] node[above]{$\varnothing$} node[left,scale=0.7]{$t=2$};
		\path (P32) [dot] node[left,scale=0.7]{$t=2$};

\node at (0.6,2.8) [scale=1]{$\cdots$};
\node at (1.5,2.8) [scale=1]{$\cdots$};
\path (0.6,2.65) node[scale=0.7]{$\#=\frac{p-1}{2}$}; 
\path (1.5,2.65) node[scale=0.7]{$\#=\frac{p-1}{2}$};


\foreach \i in {1,2,3,4,5,6} {
    \path   (1.0+0.15*\i,4) coordinate (P4\i);
	 \path (P32) edge[line] (P4\i);
	\path (P4\i) node[draw,circle,inner sep=1pt,fill=white, scale=0.6]{H};
}

\node at (2.0,3.7) [scale=1]{$\cdots$};
\path (2.0,3.5) node[scale=0.7]{$\#=2p$};

\end{tikzpicture}%

	}
	\caption{The trunk up to height $3$. The drawing is for $p=3$.
		The dotted parts and the numbers of branches correspond to the cases $p>3$.}
	\label{fig:stalk}
\end{figure}

\medskip

To obtain the complete trunk, one must understand the branches attached to the principal stalk.
We start by representing the branches attached to a vertex of odd height.

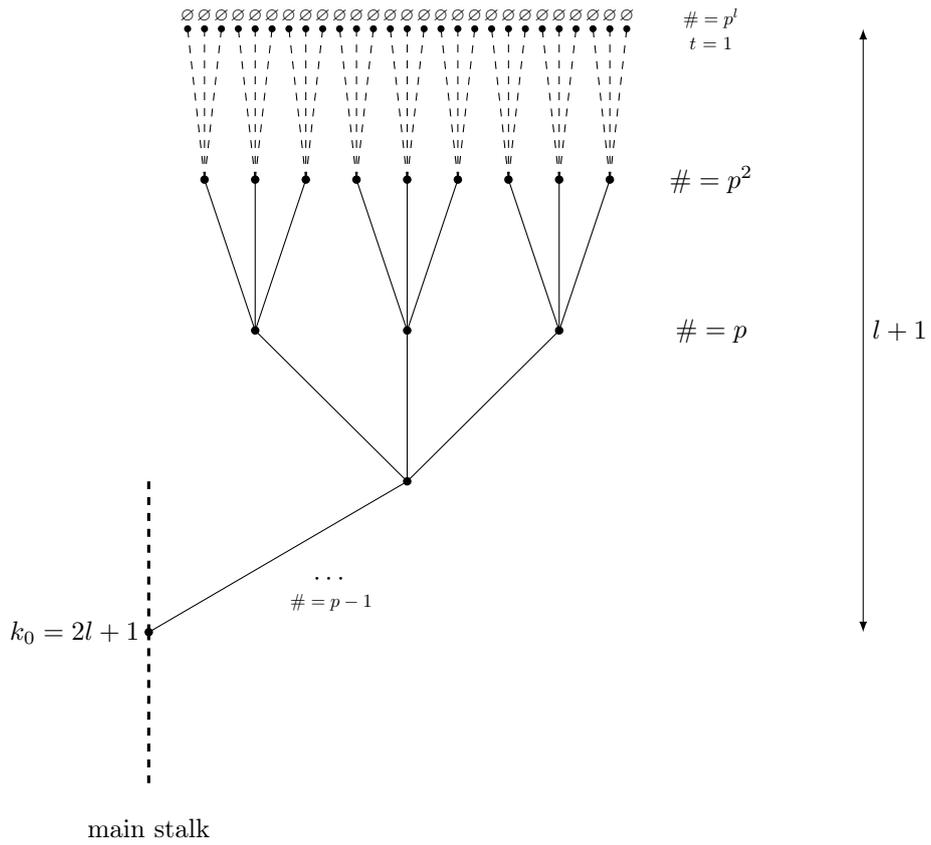
\begin{figure}[H]
	\myfigure{1.0}{
\begin{tikzpicture}[scale=2]

\tikzset{
  line/.style = {
  },
  vector/.style = {
    thick,-latex
  },
  dot/.style = {
    insert path={
      node[scale=3]{.}
    }
  }
}

\def\p{3}
 \path
   (0,0) coordinate (O)
   (-1.7,-1) coordinate (Q)
 ;

\foreach \i in {0,...,2} {
     \path (\i-1,1) coordinate (P\i1);
  }

\foreach \i in {0,...,8} {
     \path ({(\i-4)/3},2) coordinate (P\i2);
  }
\foreach \i in {0,...,26} {
     \path ({(\i-13)/9},3) coordinate (P\i3);
  }

\path (Q) edge[line] (O);
\path (Q) edge[line,very thick,dashed] ++(0,1);
\path (Q) edge[line,very thick,dashed] ++(0,-1);
\path  (Q) [dot] node[left]{$k_0=2l+1$};
\node at (-1.7,-2.3) {main stalk};
\node at (-0.5,-0.65) [scale=1]{$\cdots$};
\path (-0.5,-0.8) node[scale=0.7]{$\#=p-1$}; 
 
  \foreach \i in {0,1,2} {
     \path (O) edge[line] (P\i1);
  }
  \foreach \i in {0,1,2} {
     \foreach \k in {0,1,2} {
         \pgfmathtruncatemacro\ii{\i+3*\k}
          \path (P\k1) edge[line] (P\ii2);
     }
  }

  \foreach \i in {0,1,2} {
     \foreach \k in {0,...,8} {
         \pgfmathtruncatemacro\ii{\i+3*\k}
          \path (P\k2) edge[line,thin,dashed,] (P\ii3);
			\path (P\ii3) node[above,scale=0.7]{$\varnothing$};
     }
  }

 \path
   (O) [dot] node{};  
 ;
  \foreach \i in {0,...,2} {
     \path  (P\i1) [dot] {}; 
  }
  \foreach \i in {0,...,8} {
     \path  (P\i2) [dot] {};
  }
  \foreach \i in {0,...,26} {
     \path  (P\i3) node[scale=2.5]{.};
  }
%
%

  \path (2,1) node{$\# = p$};
  \path (2,2) node{$\# = p^2$};
  \path (2,3) node[align=center, text width=3em, scale=0.7]{$\# = p^{l}$ $t=1$};

  \draw[<->,>=latex] (3,-1) -- ++(0,4) node[midway, right]{$l+1$};

\end{tikzpicture}%

	}
	\caption{A branch attached to a vertex of odd height $2l+1$ of the principal stalk.
		There are $p-1$ such outgoing branches from this vertex.
		The branches are finite and stop at height $3l+2$.
		All vertices have thickness $2$, except the terminal vertices, which have thickness $1$.}
	\label{fig:odd}
\end{figure}

\medskip

The branches attached to a vertex of even height split into two cases,
according to whether the corresponding integer is a square modulo $p$ or not.

\begin{figure}[H]
	\myfigure{1.0}{
\begin{tikzpicture}[scale=2]

\tikzset{
  line/.style = {
  },
  vector/.style = {
    thick,-latex
  },
  dot/.style = {
    insert path={
      node[scale=3]{.}
    }
  }
}

\def\p{3}
 \path
   (0,0) coordinate (O)
   (-1.8,-1) coordinate (Q)
 ;

\foreach \i in {0,...,2} {
     \path (\i-1,1) coordinate (P\i1);
  }

\foreach \i in {0,...,8} {
     \path ({(\i-4)/3},2) coordinate (P\i2);
  }
\foreach \i in {0,...,26} {
     \path ({(\i-13)/9},3) coordinate (P\i3);
 }

\path   (-3.6,0) coordinate (OO);
\foreach \i in {0,...,2} {
     \path (-3.6+\i-1,1) coordinate (PP\i1);
  }

\foreach \i in {0,...,8} {
     \path ({-3.6+(\i-4)/3},2) coordinate (PP\i2);
  }
\foreach \i in {0,...,26} {
     \path ({-3.6+(\i-13)/9},3) coordinate (PP\i3);
 }

\path (Q) edge[line] (O);
\path (Q) edge[line,very thick,dashed] ++(0,1);
\path (Q) edge[line,very thick,dashed] ++(0,-1);
\path  (Q) [dot] node[below left]{$k_0=2l$};
\node at (-1.8,-2.3) {main stalk};
\node at (-0.75,-0.65) [scale=1]{$\cdots$};
\path (-0.75,-0.8) node[scale=0.7]{$\#=\frac{p-1}{2}$}; 

\path (Q) edge[line] (OO);
\node at (-2.5-0.5,-0.65) [scale=1]{$\cdots$};
\path (-2.5-0.5,-0.8) node[scale=0.7]{$\#=\frac{p-1}{2}$};

  \foreach \i in {0,1,2} {
     \path (O) edge[line] (P\i1);
  }
  \foreach \i in {0,1,2} {
     \foreach \k in {0,1,2} {
         \pgfmathtruncatemacro\ii{\i+3*\k}
         \path (P\k1) edge[line, dashed] (P\ii2);
			\path (P\ii2) node[above,scale=0.7]{$\varnothing$};
     }
  }


 \path
   (O) [dot] node{};  
 ;
  \foreach \i in {0,...,2} {
     \path  (P\i1) [dot] {}; 
  }
  \foreach \i in {0,...,8} {
     \path  (P\i2) [dot] {};
  }

  \path (1.8,1) node[scale=0.7]{$\# = p$};
  \path (1.8,2) node[scale=0.7]{$\# = p^{l-1}$};
  \path (1.8,1.6) node[scale=1.0]{$\vdots$};

  \draw[<->,>=latex] (2.2,-1) -- ++(0,3) node[midway, right]{$l$};

  \foreach \i in {0,1,2} {
     \path (OO) edge[line] (PP\i1);
  }
  \foreach \i in {0,1,2} {
     \foreach \k in {0,1,2} {
         \pgfmathtruncatemacro\ii{\i+3*\k}
          \path (PP\k1) edge[line, dashed] (PP\ii2);
     }
  }

  \foreach \i in {0,...,8} {
          \draw (PP\i2) -- ++(0,0.25) node[draw,circle,inner sep=1pt,fill=white, scale=0.5]{H} node[below right,scale=0.45]{$\times 2p$};
    
   }

  \foreach \i in {0,...,2} {
     \path  (PP\i1) [dot] {}; 
  }
  \foreach \i in {0,...,8} {
     \path  (PP\i2) [dot] {};
  }

\node at (-3.6,-1.5) {case of squares modulo $p$};
\node at (0,-1.5) {case of non-squares modulo $p$};
\end{tikzpicture}%

	}
	\caption{Two branches attached to a vertex of even height $2l$ (with $l>0$) of the principal stalk.
		On the right, there are $\frac{p-1}{2}$ finite outgoing branches up to height $3l$,
		corresponding to integers that are not squares modulo $p$; all their vertices have thickness $2$.
		On the left, there are also $\frac{p-1}{2}$ outgoing branches corresponding to nonzero integers
		that are squares modulo $p$; the vertices up to height $3l$ have thickness $2$,
		and on each of the $p^{l-1}$ vertices at height $3l$ are attached $2p$ Hensel trees.}
	\label{fig:even}
\end{figure}
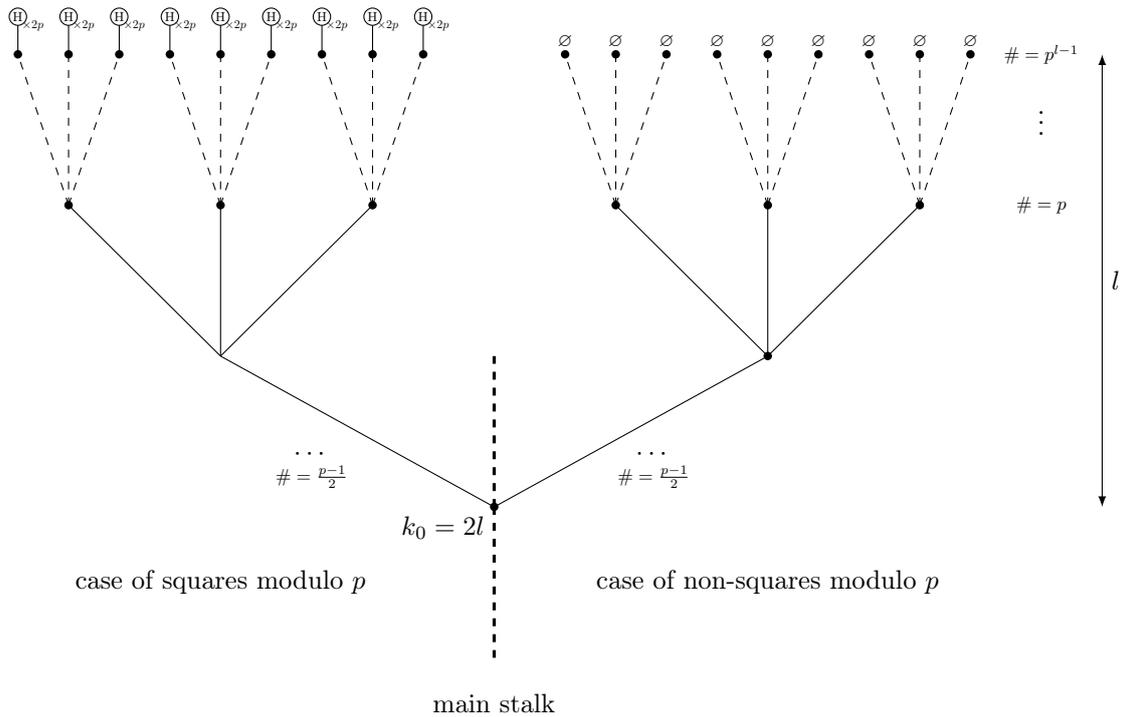

\subsection{The Poincaré series of $x^2-y^3$}

The Poincaré series $S(T)$ of $P(x,y) = x^2-y^3$ for $p>2$ is the sum of the Poincaré series computed for each part of the trunk:
\[
S(T) = S_A(T) + S_B(T) + S_C(T) + S_D(T) + S_E(T) + S_F(T).
\]
After computation, we finally obtain a rather simple fraction:
\[
S(T) = \frac{p^6 + (p^4-p^3)T^2-T^6}{(p-T)(p^5-T^6)}.
\]
This formula is also valid for $p=2$. It is indeed the same formula as the one obtained in \cite{PoVe}.

If we expand this fraction as a power series, we obtain:
\begin{align*}
	S(T) &= 1 + \tfrac{1}{p}T 
	+ \left(\tfrac{2}{p^2}-\tfrac{1}{p^3}\right)T^2
	+ \left(\tfrac{2}{p^3}-\tfrac{1}{p^4}\right)T^3
	+ \left(\tfrac{2}{p^4}-\tfrac{1}{p^5}\right)T^4 \\
	&\quad + \left(\tfrac{1}{p^5}-\tfrac{1}{p^6}\right)T^5
	+ \left(\tfrac{1}{p^5}+\tfrac{1}{p^6}-\tfrac{1}{p^7}\right)T^6
	+ \left(\tfrac{1}{p^6}+\tfrac{1}{p^7}-\tfrac{1}{p^8}\right)T^7
	+ \cdots	
\end{align*}


\subsection{The number of solutions}

By identifying the coefficients of $S(T)$ with those in the definition
$S(T) = \sum_{e\ge0} \frac{N_e}{p^{2e}} T^e$, we obtain the number of solutions of
\[
x^2-y^3 \equiv 0 \pmod{p^e}.
\]

We thus find:
\[
\begin{array}{ll}
	N_1 = p 
	&\qquad N_5 = p^{10}\left(\tfrac{1}{p^5}-\tfrac{1}{p^6}\right) \\
	N_2 = p^4\left(\tfrac{2}{p^2}-\tfrac{1}{p^3}\right) 
	&\qquad N_6 = p^{12}\left(\tfrac{1}{p^5}+\tfrac{1}{p^6}-\tfrac{1}{p^7}\right) \\
	N_3 = p^6\left(\tfrac{2}{p^3}-\tfrac{1}{p^4}\right) 
	&\qquad N_7 = p^{14}\left(\tfrac{1}{p^6}+\tfrac{1}{p^7}-\tfrac{1}{p^8}\right) \\
	N_4 = p^8\left(\tfrac{2}{p^4}-\tfrac{1}{p^5}\right) 
	&\qquad \cdots
\end{array} 
\]	

For example, for $p=5$, we obtain:
\[
\begin{array}{ll}
	N_1 = 5 
	&\qquad N_5 = \num{5625} \\
	N_2 = 45 
	&\qquad N_6 = \num{90625} \\
	N_3 = 225
	&\qquad N_7 = \num{453125} \\
	N_4 = \num{1125}
	&\qquad \cdots
\end{array} 
\]


These are values $N_e$ that can be checked by computer by enumerating all solutions when $e$ is small. For larger $e$, the Poincaré series allows one to compute $N_e$ efficiently. For example, still for $p=5$ and for $e=10$,
among the $p^{2e} = \num{95367431640625}$ possible pairs $(x,y)$, the number of solutions of
\[
x^2-y^3 \equiv 0 \pmod{5^{10}}
\]
is
\[
N_{10} = \num{95703125}.
\]

\end{document}